\newlength{\negph@wd}
\DeclareRobustCommand{\negphantom}[1]{%
  \ifmmode
    \mathpalette\negph@math{#1}%
  \else
    \negph@do{#1}%
  \fi
}
\newcommand{\negph@math}[2]{\negph@do{$\m@th#1#2$}}
\newcommand{\negph@do}[1]{%
  \settowidth{\negph@wd}{#1}%
%  \hspace*{-\negph@wd}% % for TESTING
  \hspace*{\negph@wd}% % for TESTING
}
\def\logGm{\mathbf{G}_{\log}}
\def\Gm{\mathbf{G}_m}
\def\Hom{\operatorname{Hom}}
\def\bExt{\operatorname{\mathbf{Ext}}}
\def\bExtPan{\operatorname{\mathbf{ExtPan}}}
\def\Ext{\operatorname{Ext}}
\def\gr{\operatorname{gr}}
\def\fppf{\mathrm{fppf}}
\def\logfppf{\mathrm{\log\!.fppf}}
\def\et{\mathrm{\acute{e}t}}
\def\loget{\mathrm{\log\!.\acute{e}t}}
\def\KN{\mathrm{KN}}
\def\Spec{\operatorname{Spec}}
\def\uExt{\underline{\Ext}}
\def\uHom{\underline{\Hom}}
\def\tropGm{\mathbf G_{\rm trop}}
\def\Pic{\operatorname{Pic}}
\def\uBilin{\undernorm{\operatorname{Bilin}}}
\def\uBiext{\undernorm{\operatorname{Biext}}}
\def\image{\operatorname{image}}
\mathchardef\mhyphen="2D
\newtheorem{theorem}{Theorem}
\newtheorem{proposition}[theorem]{Proposition}
\newtheorem{corollary}[theorem]{Corollary}
\newtheorem{lemma}[theorem]{Lemma}
\theoremstyle{remark}
\newtheorem{remark}[theorem]{Remark}
\theoremstyle{definition}
\newtheorem{example}[theorem]{Example}
\numberwithin{theorem}{section}
\numberwithin{equation}{section}
\def\overnorm#1{\overline{#1}\vphantom{#1}}
\def\undernorm#1{\underline{#1}\vphantom{#1}}
\title{The monodromy pairing for logarithmic $1$-motifs}
\author{Jonathan Wise}
\email{jonathan.wise@colorado.edu}
\address{University of Colorado, Campus Box 395, Boulder, CO 80309-0395}
\subjclass[2020]{14K05, 14A21, 14H40, 14F42, 14D07, 14C22, 14T10, 14T90}
\begin{document}

\maketitle

\begin{abstract}
We describe a $3$-step filtration on all logarithmic abelian varieties with constant degeneration.  The obstruction to descending this filtration, as a variegated extension, from logarithmic geometry to algebraic geometry is encoded in a bilinear pairing valued in the characteristic monoid of the base.  This pairing is realized as the monodromy pairing in $p$-adic, $\ell$-adic, and Betti cohomologies, and recovers the Picard--Lefschetz transformation in the case of Jacobians.  The Hodge realization of the filtration is the monodromy weight filtration on the limit mixed Hodge structure.
\end{abstract}

\section{Introduction}
\label{sec:intro}

Let $R$ be a strictly henselian discrete valuation ring with fraction field $K$ and suppose that $A_K$ is an abelian variety over $K$ that extends to a group scheme $A$ over $R$ with semistable reduction.  Let $X$ be the character lattice of the toric part $A^{\rm tor}$ of the closed fiber, and let $Y$ be the character lattice of the toric part of the dual semistable degeneration.  For each prime number $\ell$, Grothendieck defined a bilinear pairing \eqref{eqn:33}, with the subscript $\ell$ denoting completion at $\ell$~\cite[\S9]{sga7-IX}:
\begin{equation} \label{eqn:33}
Y_\ell \times X_\ell \to \mathbf Z_\ell
\end{equation}
When $\ell$ is invertible in $R$, this pairing can be defined in terms of the monodromy action of the inertia group on the $\ell$-adic Tate module of $A$.  

Grothendieck gives a second construction, valid for all $\ell$, that we summarize below.  The Tate module $U$ has a $3$-step filtration, $U_0 \subset U_1 \subset U_2 = U$, where $U_0$ is the Tate module of the toric part of the special fiber, and $U_1$ is the Tate module of the special fiber, also the part fixed by the action of monodromy.  The quotient $U_1/U_0$ is the Tate module of the abelian part and $U_2/U_1$ is the moving part.  The filtration may be variegated into two extensions,
\begin{gather*}
0 \to U_0 \to U_1 \to U_1/U_0 \to 0 \\
0 \to U_1/U_0 \to U_2/U_0 \to U_2/U_1 \to 0 
\end{gather*}
both of which extend over all of $R$.  The monodromy pairing arises as the obstruction to extending $U$ over $R$ as a \emph{variegated extension} (see Section~\ref{sec:variegated}).

It is not clear from Grothendieck's definition that the monodromy pairing should be integrally defined, nor that it should be independent of $\ell$.  Grothendieck proves both of these assertions~\cite[Th\'eor\`eme~10.4]{sga7-IX}, by a combination of reduction to Jacobians of curves and reduction to characteristic zero, where he may deduce an explicit tropical formula for the monodromy pairing in terms of the Picard--Lefschetz transformation of a double point degeneration.  

In this paper we will reconstruct the monodromy pairing using logarithmic and tropical considerations and then use it to recover the formula for the Picard--Lefschetz transformation.  A semistable degeneration of abelian varieties naturally produces a \emph{logarithmic abelian variety} on the special fiber.  This logarithmic abelian variety has the structure of a quotient of a logarithmic semiabelian variety by a lattice, and this will be our starting point in this paper.  For a construction of this uniformization in the language of logarithmic geometry, we direct the reader to \cite[\S\S13--15]{KKN4} or \cite[Theorem~1.1]{Zhao}, which reduce the logarithmic construction to the algebraic one \cite{Mumford-72b, Raynaud-ICM, BL1, BL2, FC}.  For Jacobians of logarithmic curves, an explicit formula for the uniformization appears in \cite{logpic}, and we will give yet another construction for a degenerating abelian variety in forthcoming work.

Let $A = A_1 / Y$ be a logarithmic abelian variety, the quotient of a logarithmic semiabelian 
variety $A_1$ by a lattice $Y$.  Let $A_0$ be the (logarithmic) toric part of $A_1$ and let $A_2 = A$.  Then 
the sequence of homomorphisms
\begin{equation*}
A_0 \to A_1 \to A_2
\end{equation*}
can be viewed as a $3$-step filtration of $A$, in the sense of strictly commutative $2$-groups.  This
filtration can be variegated into two exact sequences
\begin{gather*}
0 \to A_0 \to A_1 \to A_1 / A_0 \to 0 \\
0 \to A_1/A_0 \to A_2/A_0 \to A_2/A_1 \to 0
\end{gather*}
giving $A$ the structure of a variegated extension.  Both of these sequences are induced from
exact sequences of sheaves on the flat site of $S$, and we therefore obtain an obstruction to 
descent of $A$, as a variegated extension, to the flat site of $S$.  
In Section~\ref{sec:monodromy}, we recognize this obstruction as a bilinear pairing between $X$ and $Y$, 
valued in $\overnorm M_S^{\rm gp}$, where $X$ is the character lattice of $A_0$ and $Y$ is the kernel of 
$A_1 \to A$.

The bulk of this paper is devoted to constructing the $p$-adic, $\ell$-adic, Betti, and Hodge 
realizations of this obstruction, in Sections~\ref{sec:torsion}, \ref{sec:etale}, \ref{sec:betti}, and \ref{sec:hodge}, respectively.  In all of those cases, 
the filtration on $A$ becomes an honest filtration (i.e., of abelian groups and not only of 
strictly commutative $2$-groups).  In the $p$-adic and $\ell$-adic contexts, we obtain Grothendieck's 
construction.  The Hodge realization recovers the limit mixed Hodge structure on $H^1$.  In the 
case of a Jacobian, the monodromy pairing encodes the Picard--Lefschetz transformation on 
$H^1$.  We explain in Section~\ref{sec:picard-lefschetz} how the \'etale and Betti realizations of the monodromy pairing 
recover the Picard--Lefschetz transformations in those settings.

The calculations in Section~\ref{sec:torsion} prove the integrality of Grothendieck's monodromy pairing 
directly, without the need for reduction to characteristic zero.  (The reduction to 
Jacobians is hidden in the construction of the logarithmic uniformization, which relies on a 
reduction to Jacobians in at least some constructions \cite{BL2}; we plan to discuss this in greater detail elsewhere.)  Combined with the construction of the logarithmic Jacobian in~\cite{logpic}, 
our calculations also recover the explicit formula for the monodromy pairing of a Jacobian (Corollary~\ref{cor:curves}), again without the need to restrict to characteristic zero.

The integrality of the monodromy pairing can also be demonstrated very naturally using non-archimedean analytification~\cite{Raynaud}.  Baker and Rabinoff extend this construction to valued bases of rank~$1$ that are not necessarily discretely valued (see also~\cite{L}), as well as give a tropical interpretation of the monodromy pairing~\cite{BR}.  %We explain the connection in Section~\ref{sec:analytification}.  
The construction presented here is, of course, closely related to those but is more general in the following senses:  the logarithmic structures on the base are fine and saturated, but are otherwise arbitrary, which in particular permits bases of rank greater than~$1$ that need not necessarily be valuative; the construction of the logarithmic monodromy pairing depends only on the central fiber of a degeneration and its logarithmic structure (these  data might reasonably be called the logarithmic general fiber), which are determined by, but contain strictly less information than, the analytic general fiber of the degeneration.

Over a $1$-parameter base, closely related constructions have appeared in works of Gillibert \cite{Gillibert} and of Zhao \cite{Zhao}.  Gillibert works with the N\'eron models of dual abelian varieties over a discrete valuation ring, and constructs a canonical limit of the biextension of the general fibers; Zhao does the same, except he works with tamely ramified $1$-motifs with potentially good reduction and finds extensions to $1$-motifs in the logarithmic \'etale site.  Zhao's definition of the monodromy pairing \cite[Section 4]{Zhao} is essentially the same as our construction in Section \ref{sec:log-1-motifs} (apart from the restriction to a $1$-parameter base, which does not materially affect the definition).  Both Gillibert and Zhao view the monodromy pairing as an obstruction to descent of a canonical biextension (by $\Gm$) in the Kummer logarithmic flat or Kummer logarithmic \'etale topology to the fppf or \'etale topology.%
\footnote{The monodromy pairing considered by Gillibert is actually a closely related pairing (also defined by Grothendieck~\cite[Th\'eor\`eme 7.2 b]{sga7-VIII}) between the component groups of the N\'eron models of dual abelian varieties, and valued in $\mathbf Q / \mathbf Z$.  We will explain the relationship between this pairing and the monodromy pairing as we have defined it in Section~\ref{sec:dual}.}
The perspective adopted here is that the monodromy pairing is the obstruction to descending a logarithmic variegated extension to an algebraic one,
but we will explain the relationship to Gillibert's and Zhao's approaches in Section~\ref{sec:dual}.%
\footnote{Grothendieck's approach to the monodromy pairing through variegated extensions does not require explicit invocation of the dual abelian variety.  Thus Section~\ref{sec:dual} and appendix~\ref{sec:log-trop-Gm} are only used for comparison to the approaches of \cite{Gillibert} and \cite{Zhao}, and the remainder of the paper is independent of them.}
Zhao also constructs the \'etale realization of a Kummer \'etale logarithmic $1$-motif, which closely parallels our Section~\ref{sec:etale}.

\subsection*{Conventions}

A \emph{logarithmic scheme} will always be a scheme with a fine and saturated logarithmic structure.  In Section~\ref{sec:betti}, we will make use of a scheme with a non-integral logarithmic structure to construct the Kato--Nakayama space, but we will call this a `scheme with logarithmic structure' rather than a `logarithmic scheme'.  

The \emph{logarithmic multiplicative group}, $\logGm$, is the functor on logarithmic schemes given by $\logGm(S) = \Gamma(S, M_S^{\rm gp})$.  The \emph{tropical multiplicative group} $\tropGm$ is the quotient $\logGm / \Gm$ (the quotient is taken in the strict \'etale topology on logarithmic schemes), with $\tropGm(S) = \Gamma(S, \overnorm M_S^{\rm gp})$, where $\overnorm M_S = M_S / \mathcal O_S^\ast$ is the characteristic monoid and $\overnorm M_S^{\rm gp}$ is its associated group.  The operation of $\logGm$ is written multiplicatively, while the operation of $\tropGm$ is written additively.  See Appendix~\ref{sec:log-trop-Gm} for some basic properties of $\logGm$ and $\tropGm$.

We typically use a bold symbol to denote a category or stack and the same symbol in roman for its sheaf of isomorphism classes.  For example, $\bExt^1(A,B)$ is the groupoid of extensions of $A$ by $B$, while $\Ext^1(A,B)$ is the set of isomorphism classes of extensions.

We must consider sheaves on many different sites, including small and large sites, as well as logarithmic versions.  These are connected by canonical morphisms with fully faithful pullback functors, so it is harmless, and often convenient, to identify sheaves on smaller sites with their pullbacks to larger sites.  For example, a sheaf on the small \'etale site of a scheme may be pulled back to a sheaf on the large \'etale site; this pullback is an algebraic space, and we refer to it as the original sheaf's espace \'etal\'e.  Likewise, we frequently pull back sheaves from the big \'etale site of the underlying scheme of a logarithmic scheme $S$ to the site of logarithmic schemes over $S$ with strict \'etale covers; when pulling back a sheaf that is representable by a scheme, this corresponds to equipping that scheme with the logarithmic structure pulled back from $S$.  I have tried to point out these identifications by pullback when they occur.  Without qualitification, $\uHom(F,G)$ and $\uExt^1(F,G)$ will denote the sheaves of homomorphisms from $F$ to $G$ and extensions of $F$ by $G$, respectively; we will affix a subscript to denote the restriction of these sheaves to smaller sites (e.g., $\uHom_{S^{\rm KN}}(-,-)$ and $\uExt^1_{S^{\rm KN}}(-,-)$ in Section~\ref{sec:hodge}).

We work frequently with stacks of strictly commutative $2$-groups, which are also known as strictly commutative Picard stacks \cite[1.4]{sga4-XVIII}.  Not all of the operations we perform on these objects are described in op.\ cit., but further details may be found elsewhere in the literature \cite{Ber11, BB19, Bro21, 2ab}.  Moreover, by the equivalence between strictly commutative $2$-groups and $2$-term complexes \cite[Lemma~1.4.13]{sga4-XVIII}, all of these operations have analogues in the equivalent $2$-category of $2$-term complexes.

For example, the \emph{cokernel}, or \emph{quotient}, or a morphism of strictly commutative $2$-groups $A \to B$ is a special case of the fibered sum construction of \cite[Definition~2.2]{Ber11}: in the notation of op.\ cit., the quotient $[B/A]$ is $B +^A 0$.  In the special case where $A$ and $B$ are sheaves of groups, the underlying stack of $[B/A]$ is simply the stack quotient of $B$ by $A$; its group structure makes it the universal strictly commutative $2$-group receiving a morphism from $B$ and equipped with a specified isomorphism between the composition $A \to B \to [B/A]$ and the zero morphism.  The quotient may also be obtained by representing strictly commutative $2$-groups as complexes and then truncating the cone of the corresponding morphism.  If this is done naively, it raises questions of well-definition, because the cone is not functorial; however, when the equivalence between $2$-term complexes and strictly commutative $2$-groups is formulated carefully, the quotient may be obtained by an explicit construction \cite[\S8]{2ab}.

\section{Logarithmic $1$-motifs}
\label{sec:log-1-motifs}

Let $S$ be a logarithmic scheme.  By a \emph{logarithmic torus} over $S$, we will mean a group-valued functor on logarithmic schemes over $S$ of the form $\uHom(X, \logGm)$, where $X$ is an \'etale sheaf of abelian groups on $S$ that is locally finitely generated and free (henceforth, a \emph{lattice}).  We also regard lattices as sheaves on the strict \'etale site of logarithmic schemes over $S$ by pullback from the small \'etale site; this pullback is representable by a logarithmic scheme with a strict projection to $S$, and we denote this logarithmic scheme by the same symbol.  A \emph{logarithmic semiabelian variety} over $S$ is an extension of an abelian variety over $S$ (with strict logarithmic structure, relative to $S$) by a logarithmic torus over $S$.  In this paper, a \emph{logarithmic $1$-motif} will be the strictly commutative $2$-group quotient of a logarithmic semiabelian variety by a sheaf of lattices.

\begin{remark}
This definition is not the correct one if one wants to allow logarithmic $1$-motifs to vary in families with non-constant degeneracy.  In order to be able to algebraize formal deformations, we need a condition like~\cite[3.1]{KKN2}.  However, every logarithmic $1$-motif (with constant degeneracy) in the sense of~\cite{KKN2} induces one in the sense defined here, so we omit further discussion of~\cite[3.1]{KKN2}.
\end{remark}

If $G$ is a semiabelian variety with torus part $T = \uHom(X,\Gm)$ then the inclusion $\Gm \to \logGm$ induces $T \to T^{\log} = \uHom(X,\logGm)$, and therefore $G \to G^{\log}$ by pushout.  This determines a functor from the category of semiabelian varieties over $S$ to the category of logarithmic semiabelian varieties over~$S$.

Lemmas~\ref{lem:discrete-sect-tors} and~\ref{lem:discrete-hom-ext}, and Proposition~\ref{prop:category}, below, follow the proof of \cite[Lemma~2.3.2]{Raynaud}, which is attributed by Raynaud to Illusie.  The lemma also appears in a special case as \cite[Proposition~7.22]{KKN2}.

\begin{lemma} \label{lem:discrete-sect-tors}
	Let $G$ be a smooth scheme with connected geometric fibers over $S$ and let $Z$ be the espace \'etal\'e of a sheaf of torsion-free abelian groups on the small \'etale site of $S$.  Then all morphisms $G \to Z$ and all $Z$-torsors over $G$ descend uniquely to $S$.
\end{lemma}
\begin{proof}
	Suppose that $\varphi : G \to Z$ is a morphism.  Let $g$ be a geometric point of $G$ lying above a geometric point $s$ of $S$, and let $z = \varphi(g)$.  Since $Z$ is \'etale over $S$, there is an \'etale neighborhood $U$ of $s$ in $S$ and a section of $Z$ over $U$, the image of whose restriction to $s$ is $z$.  Let $Z_0 \subset Z$ be the image of this section.  Then $Z_0$ is open in $Z$, so $G_0 = \varphi^{-1} Z_0 \subset G$ is an open subset whose restriction to every fiber of $G$ over $U$ is also closed in the fiber.  But $G$ has connected fibers over $S$, so $G_0 = G$.  Thus the restriction of $\varphi$ to the preimage of $U$ in $G$ factors uniquely through $U$.  Repeating this for every geometric point $g$ of $G$, we conclude that $\varphi$ factors through $S$.

	Now suppose that $E$ is a $Z$-torsor over $G$.  Then $E$ is an algebraic space and is smooth over $S$.  By \cite[Theorem~4.5]{minimal}, there is an \'etale algebraic space $\pi_0(E/S)$ over $S$ and an $S$-morphism $E \to \pi_0(E/S)$, such that the fiber of $\pi_0(E/S)$ over a geometric point $s$ of $S$ is $\pi_0(E_s)$.  We will show that $E \to \pi_0(E/S) \mathop\times_S G$ is an isomorphism.

	Since $Z$ and $\pi_0(E/S)$ are both \'etale sheaves over $S$, it is sufficient to check this on the geometric fibers over $S$.  We are therefore reduced to the situation where $S$ is the spectrum of an algebraically closed field, in which case $G$ is regular and $Z$ is a discrete abelian group.  The underlying $Z$-torsor of $E$ is therefore trivial \cite[Proposition~5.1]{sga7-VIII}, and $Z \to \pi_0(G \times Z)$ is therefore an isomorphism.
\end{proof}

\begin{lemma} \label{lem:discrete-hom-ext}
	Let $G$ be a smooth commutative group scheme with connected geometric fibers over $S$ and let $Z$ be the espace \'etal\'e of a sheaf of torsion-free abelian groups on the small \'etale site of $S$.  Then the homomorphisms of $G$ into $Z$ and extensions of $G$ by $Z$ are all trivial:
	\setcounter{equation}{\value{theorem}}
	\begin{equation}
		\uHom(G,Z) = \uExt^1(G,Z) = 0
	\end{equation}
\end{lemma}
\begin{proof}
	By Lemma~\ref{lem:discrete-sect-tors}, any homomorphism $G \to Z$ factors uniquely through $S$, hence must be zero.  Also by Lemma~\ref{lem:discrete-sect-tors}, the underlying $Z$-torsor of an extension $E$ of $G$ by $Z$ must be trivial.  Therefore the structure of an extension is encoded by a map $G \times G \to Z$ describing the group structure of $E$.  Again by Lemma~\ref{lem:discrete-sect-tors}, this must be constant, so $E$ is a trivial extension.
\end{proof}

\setcounter{theorem}{\value{equation}}
\begin{proposition} \label{prop:semiabelian}
	Every logarithmic semiabelian variety is induced from a semiabelian variety 
	that is unique up to unique isomorphism.
\end{proposition}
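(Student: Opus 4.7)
The plan is to show that the pushout functor $G \mapsto G^{\log}$ is an equivalence between the category of semiabelian varieties over $S$ and the category of logarithmic semiabelian varieties over $S$. An abelian quotient with strict logarithmic structure is literally a semiabelian variety, and a logarithmic torus $T^{\log} = \uHom(X, \logGm)$ is canonically recovered from its character sheaf $X$, which also determines the ordinary torus $T := \uHom(X, \Gm)$. Thus only the extension datum needs to be lifted, and the claim reduces to showing that, for fixed abelian variety $A$ and locally free $X$, pushout along $T \to T^{\log}$ induces an equivalence of Picard stacks of extensions of $A$.

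To verify this, I would use the exact sequence $0 \to \Gm \to \logGm \to \overnorm{M}_S^{\rm gp} \to 0$ on the appropriate (log-)flat site of $S$. Applying $\uHom(X,-)$, which is exact in the second variable because $X$ is locally finitely generated and free, produces
$$0 \to T \to T^{\log} \to Q \to 0, \qquad Q := \uHom(X, \overnorm{M}_S^{\rm gp}).$$
The sheaf $Q$ is torsion-free since $\overnorm{M}_S^{\rm gp}$ is torsion-free over an fs logarithmic base and $X$ is locally free. Lemma~\ref{lem:discrete-hom-ext} then gives $\uHom(A, Q) = \uExt^1(A, Q) = 0$, so the long exact $\uExt(A,-)$ sequence yields
$$\uHom(A, T) \xrightarrow{\sim} \uHom(A, T^{\log}) \quad \text{and} \quad \uExt^1(A, T) \xrightarrow{\sim} \uExt^1(A, T^{\log}).$$
Since these two isomorphisms compute, respectively, the automorphism sheaves and the isomorphism classes of extensions, they identify $\bExt(A, T)$ with $\bExt(A, T^{\log})$ as Picard stacks, establishing both essential surjectivity and full faithfulness of the pushout functor.

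The main delicate point is confirming the hypotheses of Lemma~\ref{lem:discrete-hom-ext} for $Q$: the lemma is proved by reduction to geometric points, where the target must have discrete, torsion-free stalks. For an fs logarithmic scheme, the geometric stalks of $\overnorm{M}_S^{\rm gp}$ are finitely generated free abelian groups (being the group completions of sharp fs monoids), hence the stalks of $Q$ are free abelian groups as well, and the argument of the lemma applies verbatim. With this verification the proposition follows directly from the Ext computation above.
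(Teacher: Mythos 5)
Your proof is correct and follows essentially the same route as the paper: both apply the long exact $\uExt(A,-)$ sequence to $0 \to \uHom(X,\Gm) \to \uHom(X,\logGm) \to \uHom(X,\logGm/\Gm) \to 0$ and kill the outer $\uHom$ and $\uExt^1$ terms using Lemma~\ref{lem:discrete-hom-ext} applied to the torsion-free quotient. Your explicit framing as an equivalence of Picard stacks, and your check that the stalks of $\overnorm{M}_S^{\rm gp}$ are finitely generated free so that the lemma applies, are welcome elaborations of points the paper leaves implicit.
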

\begin{proof}
	Let $A$ be an abelian variety over $S$ and let $X$ be a lattice over $S$.  We use the following exact sequence:
	\begin{multline*}
		\Hom(A, \uHom(X, \tropGm)) \to \Ext^1(A, \uHom(X, \Gm)) \\ 
		\to \Ext^1(A, \uHom(X, \logGm)) \to \Ext^1(A, \uHom(X, \tropGm))
	\end{multline*}
	We wish to show that the outer terms vanish.  
	Over any logarithmic $S$-scheme $T$, a morphism $A \to \uHom(X,\tropGm)$ 
	corresponds to a section of the espace \'etal\'e of 
	$\uHom(X,\overnorm M_T^{\rm gp})$ over $A$, from which it is determined.
	But this is is necessarily zero by Lemma~\ref{lem:discrete-hom-ext}.  Likewise, 
	an extension restricts to an extension of $A$ by $\uHom(X,\overnorm M_T^{\rm gp})$, 
	which is uniquely split by Lemma~\ref{lem:discrete-hom-ext}.  Since the splitting 
	is unique, it commutes with base change, which implies that 
	$\Ext^1(A, \uHom(X,\tropGm)) = 0$ as well.  
\end{proof}

Let $G$ be a logarithmic semiabelian variety and let $G^{\rm alg}$ be its
underlying semiabelian variety.  We define $G^{\rm trop}$ to be the quotient
$G/G^{\rm alg}$.  If the toric part of $G$ is $\uHom(X, \logGm)$, then 
$G^{\rm trop} = \uHom(X, \tropGm)$.

Suppose that $G$ is a logarithmic semiabelian variety whose toric part has
character lattice~$X$.  Let $Y$ be a lattice over $S$, and let $Y \to G$
be a homomorphism.  This induces a homomorphism $Y \to G^{\rm trop}$ and
therefore a pairing $X \otimes Y \to \tropGm$, to be denoted 
$\langle x,y \rangle$ on local sections $x$ of $X$ and $y$ of $Y$.  This
pairing will turn out to be the \emph{logarithmic monodromy pairing}.

\begin{proposition} \label{prop:category}
	For logarithmic $1$-motifs $[G/Y]$ and $[G'/Y']$, the morphisms 
	$[G/Y] \to [G'/Y']$ are the same as the morphisms of complexes 
	$[Y \to G] \to [Y' \to G']$.  
\end{proposition}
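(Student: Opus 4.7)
The plan is to produce mutually inverse constructions between morphisms of Picard 2-groups $[G/Y] \to [G'/Y']$ and chain maps $[Y \to G] \to [Y' \to G']$. In one direction, a chain map $(f_Y, f_G)$ descends by the universal property of the quotient to a strict morphism of Picard 2-groups: objects $g$ of $[G/Y]$ are sent to $f_G(g)$, and a morphism $y \colon g \to g + \partial y$ is sent to $f_Y(y) \colon f_G(g) \to f_G(g + \partial y)$, which is well-defined precisely because of the chain map condition $\partial' \circ f_Y = f_G \circ \partial$.

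For the converse, given $\phi \colon [G/Y] \to [G'/Y']$, compose with the projection $\pi \colon G \to [G/Y]$ to form $\psi := \phi \circ \pi \colon G \to [G'/Y']$. Since the injectivity of $Y' \hookrightarrow G'$ identifies $[G'/Y']$ with the sheaf quotient $G'/Y'$, the map $G' \to [G'/Y']$ is a $Y'$-torsor, and pulling it back along $\psi$ produces a $Y'$-torsor $P$ on $G$. Compatibility of $\phi$ and $\pi$ with the Picard structures gives $P$ an abelian group law exhibiting it as an extension of $G$ by $Y'$ in the category of fppf sheaves of abelian groups. By Proposition~\ref{prop:semiabelian}, $G$ is induced from a semiabelian variety $G^{\rm alg}$, and by Lemma~\ref{lem:discrete-hom-ext} the restricted extension $P|_{G^{\rm alg}}$ splits uniquely. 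Since $G/G^{\rm alg} = \uHom(X, \logGm/\Gm)$ takes values in $\overnorm M^{\rm gp}$ and admits no nonzero homomorphisms into a torsion-free discrete lattice, the splitting of $P|_{G^{\rm alg}}$ extends uniquely to a splitting of $P$, producing a canonical group-theoretic lift $\tilde\psi \colon G \to G'$ of $\psi$. The restriction of $\tilde\psi$ to $Y \subset G$ factors through $Y' = \ker(G' \to [G'/Y'])$, because the composite $Y \to G \to [G/Y]$ is trivial (via the tautological isomorphism $y \colon 0 \to \partial y$), whence so is the composition of $\psi|_Y$ into $[G'/Y']$. This yields the lattice component $f_Y \colon Y \to Y'$, and together with $\tilde\psi$ forms the desired chain map.

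Mutual inverseness is immediate: starting from a chain map, the pullback torsor $P$ carries a tautological trivialization that recovers $f_G$; starting from $\phi$, the splitting of $P$ is unique and so recovers $\phi$ upon descent to the quotient. The main technical obstacle is bootstrapping Lemma~\ref{lem:discrete-hom-ext} from the semiabelian to the logarithmic semiabelian setting via Proposition~\ref{prop:semiabelian}, including the small verification that group-theoretic splittings lift uniquely across the logarithmic tropical part $G/G^{\rm alg}$.
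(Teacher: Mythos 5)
Your overall strategy is the same as the paper's: compose a morphism $[G/Y]\to[G'/Y']$ with $G\to[G/Y]$, lift the resulting map $G\to[G'/Y']$ to $G\to G'$ by splitting the pulled-back extension of $G$ by $Y'$, control uniqueness of the lift by $\uHom(G,Y')=0$, and read off the lattice component from the kernels. The paper does this in two sentences by invoking Lemma~\ref{lem:discrete-hom-ext} for both vanishings (applied directly to the logarithmic semiabelian variety $G$); you instead try to make the reduction to the algebraic case explicit through $0\to G^{\mathrm{alg}}\to G\to G^{\mathrm{trop}}\to 0$, which is a reasonable refinement but is exactly where your argument has a gap.

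You assert that because $G/G^{\mathrm{alg}}=\uHom(X,\logGm/\Gm)$ admits no nonzero homomorphisms into the lattice $Y'$, ``the splitting of $P|_{G^{\mathrm{alg}}}$ extends uniquely to a splitting of $P$.'' Vanishing of $\uHom(G^{\mathrm{trop}},Y')$ gives only \emph{uniqueness} of such an extension of the splitting; for \emph{existence} you need $\uExt^1(G^{\mathrm{trop}},Y')=0$. Indeed, from the exact sequence
\begin{equation*}
\uHom(G^{\mathrm{alg}},Y')\to\uExt^1(G^{\mathrm{trop}},Y')\to\uExt^1(G,Y')\to\uExt^1(G^{\mathrm{alg}},Y'),
\end{equation*}
Lemma~\ref{lem:discrete-hom-ext} kills the outer terms, so the vanishing of $\uExt^1(G^{\mathrm{trop}},Y')$ is precisely what remains to be proved, and it does not follow from the absence of homomorphisms into a lattice. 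A secondary point: the claim that $\uHom(X,\logGm/\Gm)$ has no nonzero homomorphisms to a torsion-free lattice needs more justification than ``takes values in $\overnorm M^{\mathrm{gp}}$'' --- when $\overnorm M_S^{\mathrm{gp}}\simeq\mathbf Z$ the group of sections over $S$ is itself a lattice, and the vanishing really comes from functoriality in the test object (divisibility after Kummer covers), not from the shape of the value group. The remainder of your proposal (the easy direction, the identification of the kernel with $Y'$, mutual inverseness) matches the paper and is fine.
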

\begin{proof}
	Suppose that $[G/Y] \to [G'/Y']$ is a homomorphism.  The choices of
	lift of the composed homomorphism $G \to [G'/Y']$ to $G \to G'$ form a
	pseudotorsor on $S$ under $\uHom(G,Y')$.  By
	Lemma~\ref{lem:discrete-hom-ext}, we have $\uHom(G,Y') = 0$, so lifts
	are unique if they exist, and the existence of a lift is a local
	question in $S$.

	The obstruction to finding a lift is the extension of $G$ by $Y'$
	obtained by pullback from $G \to [G'/Y']$.  By
	Lemma~\ref{lem:discrete-hom-ext}, all such extensions are uniquely
	split.
\end{proof}

The proposition implies that logarithmic $1$-motifs form a category, and not
merely a $2$-category.

Let $M = [Y \to G]$ be a logarithmic $1$-motif, where $G$ is an extension of
the abelian variety $A$ by the logarithmic semiabelian variety 
$T = \uHom(X, \logGm)$.  We give $M$ the following filtration, called the
\emph{weight filtration}:
\setcounter{equation}{\value{theorem}}
\begin{align}
W_{-1} M & = T \notag \\
W_{0} M & = G \label{eqn:3} \\
W_1 M & = M  \notag
\end{align}
The following are the graded pieces of this filtration:
\begin{align*}
\gr_{-1} W_\bullet M & = T \\
\gr_0 W_\bullet M & = A \\
\gr_1 W_\bullet M & = \mathrm B Y
\end{align*}
Here $\mathrm BY$ is the classifying stack of $Y$-torsors, with its $2$-group 
structure coming from contracted sum of torsors.  It is representable by the 
complex $[Y \to 0 ]$.

Suppose $G$ is a logarithmic semiabelian variety with toric part $T$, induced from
a semiabelian variety $G^{\rm alg}$ with toric part $T^{\rm alg}$, and that
$G^{\rm trop} = G/ G^{\rm alg} = T/T^{\rm alg}$ is its tropicalization.  If
$M = [G/Y]$ is a logarithmic $1$-motif then we write $M^{\rm trop} = [G^{\rm trop}/Y]$
and call it the \emph{tropicalization} of $M$.  The filtration~\eqref{eqn:3}
induces a filtration of $M^{\rm trop}$:
\begin{align*}
	W_{-1} M^{\rm trop} & = T^{\rm trop} \\
	W_0 M^{\rm trop} & = T^{\rm trop} \\
	W_1 M^{\rm trop} & = M^{\rm trop}
\end{align*}
Here are the associated graded pieces of $W_\bullet M^{\rm trop}$:
\begin{align*}
	\gr_{-1} W_\bullet M^{\rm trop} & = T^{\rm trop} \\
	\gr_0 W_\bullet M^{\rm trop} & = 0 \\
	\gr_1 W_\bullet M^{\rm trop} & = \mathrm B Y
\end{align*}

\section{Variegated extensions}
\label{sec:variegated}

We recall the theory of variegated extensions (extensions panach\'ees) from \cite[9.3]{sga7-IX}, with a few complements.  Fix sheaves of abelian groups $P$, $Q$, and $R$ and extensions~\eqref{eqn:100} and~\eqref{eqn:101}:
\begin{gather}
	0 \to P \to F \to R \to 0 \label{eqn:100} \tag{$F$} \\
	0 \to R \to E \to Q \to 0 \label{eqn:101} \tag{$E$}
\end{gather}
A \emph{variegated extension} of $E$ by $F$ is a sheaf of abelian groups $W$ with a $3$-step filtration
\begin{equation}
W = W_1 \supset W_0 \supset W_{-1} \supset 0
\end{equation}
and identifications of the extensions~\eqref{eqn:100} and~\eqref{eqn:101}, respectively, with~\eqref{eqn:58} and~\eqref{eqn:59}:
\begin{gather}
0 \to W_{-1} \to W_0 \to W_0/W_{-1} \to 0 \label{eqn:58} \\
0 \to W_0/W_{-1} \to W_1/W_{-1} \to W_1/W_0 \to 0 \label{eqn:59}
\end{gather}
In other words, a variegated extension of $E$ by $F$ is a commutative diagram~\eqref{eqn:62}, with exact rows and columns, and $P = W_{-1}$, $F = W_0$, $W = W_1$:
\begin{equation} \label{eqn:62} \vcenter{ \xymatrix{
& 0 \ar[d] & 0 \ar[d] \\
0 \ar[r] & P \ar[r] \ar[d] & P \ar[r] \ar[d] & 0  \ar[d] \\
0 \ar[r] & F \ar[r] \ar[d] & W \ar[r] \ar[d] & Q \ar[r] \ar[d] & 0 \\
0 \ar[r] & R \ar[r] \ar[d] & E \ar[r] \ar[d] & Q \ar[r] \ar[d] & 0 \\
& 0 & 0 & 0
}} \end{equation}

A variegated extension of strictly commutative $2$-groups is defined the same way, with exact sequences and filtrations interpreted appropriately (see \cite{2ab} for further details).  We will later be interested in variegated extensions of strictly commutative $2$-groups in which $Q = \mathrm B Y$ is the classifying space of a sheaf of abelian groups, and $P$ and $R$ are both sheaves of abelian groups.  In that case, we can rotate the sequence $E$ and~\eqref{eqn:62} is equivalent to a cross:
\begin{equation} \label{eqn:28} \vcenter{\xymatrix{
& & 0 \ar[d] & 0 \ar[d] \\
& 0 \ar[r] \ar[d] & P \ar[r] \ar[d] & P \ar[r] \ar[d] & 0 \\
0 \ar[r] & Y \ar[r] \ar[d] & F \ar[r] \ar[d] & W \ar[r] \ar[d] & 0 \\
0 \ar[r] & Y \ar[r] \ar[d] & R \ar[r] \ar[d] & E \ar[r] \ar[d] & 0 \\
& 0 & 0 & 0
}} \end{equation}
The reader who would prefer not to work explicitly with strictly commutative $2$-groups may translate our arguments in terms of diagram~\eqref{eqn:28}.

It is immediate from the $5$-lemma that variegated extensions form a groupoid.  The groupoid of variegated extensions of $E$ by $F$ will be denoted $\bExtPan(E,F)$.  

\setcounter{theorem}{\value{equation}}
\begin{remark} \label{rem:R=0}
	If $R = 0$, there is a canonical identification $\bExtPan(E,F) = \bExt^1(Q,P)$.  More generally, if either of the extensions $E$ or $F$ is split then $\bExtPan(E,F) = \bExt^1(Q,P)$.  For example, if $F$ is split by a morphism $R \to F$ then we obtain a morphism $R \to W$ by composition and $W/R$ is then an extension of $Q$ by $P$; the inverse operation sends an extension $X$ of $Q$ by $P$ to $X \mathop\times_Q E$.
\end{remark}

There is a canonical action of $\bExt^1(Q,P)$ on $\bExtPan(E,F)$, by Baer sum.  More precisely, given an extension~\eqref{eqn:63},
\setcounter{equation}{\value{theorem}}
\begin{equation} \label{eqn:63}
0 \to P \to X \to Q \to 0
\end{equation}
we may combine~\eqref{eqn:63} with~\eqref{eqn:62} to obtain~\eqref{eqn:64}:
\begin{equation} \label{eqn:64} \vcenter{ \xymatrix{
& 0 \ar[d] & 0 \ar[d] \\
0 \ar[r] & P \oplus P \ar@{=}[r] \ar[d] & P \oplus P \ar[r] \ar[d] & 0 \ar[d] \\
0 \ar[r] & F \oplus P \ar[r] \ar[d] & W \oplus X \ar[r] \ar[d] & Q \oplus Q \ar@{=}[d] \ar[r] & 0 \\
0 \ar[r] & R \ar[r] \ar[d] & E \oplus Q \ar[r] \ar[d] & Q \oplus Q \ar[d] \ar[r] & 0 \\
& 0 & 0 & 0
}} \end{equation}
The rows and columns are still exact.  We now push out along the codiagonal $P \oplus P \to P$ and pull back along the diagonal $Q \to Q \oplus Q$ to obtain a new variegated extension $W'$ of $E$ by $F$.  

\setcounter{theorem}{\value{equation}}
\begin{proposition}
The variegated extensions of $E$ by $F$ form a torsor under $\bExt^1(Q,P)$.
\end{proposition}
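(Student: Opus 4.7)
The plan is to show that the Baer-sum action of $\bExt^1(Q,P)$ on $\bExtPan(E,F)$ is both free and transitive, by reducing all of the data of a variegated extension to its middle column, namely the extension $0 \to P \to W \to E \to 0$ extracted from~\eqref{eqn:62}.

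First I would reformulate variegated extensions: specifying a variegated extension of $E$ by $F$ is equivalent to specifying an extension $\widetilde W$ of $E$ by $P$ together with an isomorphism $\iota$, as extensions of $R$ by $P$, between the pullback $\widetilde W \times_E R$ and the fixed $F$. The full diagram~\eqref{eqn:62} can be reconstructed from this data by the obvious pullbacks and pushouts. With this description, the Baer-sum construction~\eqref{eqn:64} becomes transparent: the middle column of $W + X$ is the Baer sum of $\widetilde W$ with the pullback of $X$ along $E \to Q$, while the identification $\iota$ is unchanged.

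For transitivity, given two variegated extensions $W$ and $W'$, I would form the Baer difference $\widetilde X$ of their middle columns, an element of $\Ext^1(E, P)$. Because both $W$ and $W'$ are identified with $F$ upon restriction to $R$, the pullback of $\widetilde X$ to $R$ is canonically trivial. The long exact sequence associated to $0 \to R \to E \to Q \to 0$,
\begin{equation*}
\Hom(R, P) \to \Ext^1(Q, P) \to \Ext^1(E, P) \to \Ext^1(R, P),
\end{equation*}
then exhibits $\widetilde X$ as the image of some $X \in \Ext^1(Q, P)$, and an inspection of~\eqref{eqn:64} identifies $W + X$ with $W'$. For freeness, an isomorphism $W + X \simeq W$ yields an isomorphism of middle columns, so the pullback of $X$ along $E \to Q$ is trivial in $\Ext^1(E,P)$; the same long exact sequence places $X$ in the image of $\Hom(R,P)$, and compatibility of the isomorphism with $\iota$ then forces the representing homomorphism to vanish.

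The main obstacle I anticipate is the bookkeeping required to keep all of the identifications with $E$, $F$, and the successive quotients of the filtration straight through the push-pull steps of~\eqref{eqn:64}, especially when translating to the strictly commutative $2$-group setting via the cross diagram~\eqref{eqn:28}. Once that bookkeeping is in place, however, the conclusion is entirely formal and reduces to the long exact sequence of $\Ext$ groups applied to the given extension of $Q$ by $R$.
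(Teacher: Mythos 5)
Your argument is correct in substance, but it takes a genuinely different route from the paper's. The paper never isolates the middle column: it forms the difference $W'-W$ of two variegated extensions directly, by pulling $W\oplus W'$ back along the diagonal of $E\oplus E$ and pushing out along the difference map $P\oplus P\to P$, and then observes that $(W,W')\mapsto (W,W'-W)$ and $(W,X)\mapsto (W,W+X)$ are push--pull operations governed by the unipotent matrices $\left(\begin{smallmatrix}1&-1\\0&\hphantom{-}1\end{smallmatrix}\right)$ and $\left(\begin{smallmatrix}1&1\\0&1\end{smallmatrix}\right)$, which are mutually inverse; freeness and transitivity come in one stroke with no exact sequence. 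Your reduction of a variegated extension to the pair $(\widetilde W,\iota)$ --- the middle column of~\eqref{eqn:62} together with an identification of its restriction to $R$ with $F$ --- is essentially Grothendieck's original treatment in \cite[9.3]{sga7-IX}, with the long exact sequence for $0\to R\to E\to Q\to 0$ doing the rest. Two refinements are needed, both of which you partly anticipate. First, since the statement concerns the groupoid $\bExtPan(E,F)$ (and, later in the paper, strictly commutative $2$-groups), you should use the categorified form of your exact sequence: $\bExt^1(Q,P)$ is equivalent to the groupoid of extensions of $E$ by $P$ equipped with a \emph{trivialization} over $R$. The canonical trivialization $\iota'-\iota$ that you note is exactly what produces $X$ \emph{together with} an isomorphism $W+X\simeq W'$ compatible with the identifications over $R$; the mere equality of classes in $\Ext^1(E,P)$ would not suffice for that last step. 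Second, in the freeness argument, triviality of the pullback of $X$ to $E$ only places $X$ in the image of the connecting map $\Hom(R,P)\to\Ext^1(Q,P)$, and compatibility with $\iota$ shows that the representing homomorphism lifts to $\Hom(E,P)$ rather than that it vanishes outright --- which still gives $X=0$, since the connecting map kills the image of $\Hom(E,P)$. With those adjustments your proof is complete; what the paper's matrix argument buys is that it bypasses the long exact sequence and all compatibility bookkeeping, while your version makes more explicit where the $\bExt^1(Q,P)$-worth of freedom actually lives.
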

\begin{proof}
Suppose that $W$ and $W'$ are variegated extensions of $E$ by $F$.  We form their difference $W' - W$, as follows.  First, assemble the commutative diagram~\eqref{eqn:65} with exact rows and columns (the variegated extension $W \oplus W'$ of $E \oplus E$ by $F \oplus F$):
	\setcounter{equation}{\value{theorem}}
\begin{equation} \label{eqn:65} \vcenter{ \xymatrix{
& 0 \ar[d] & 0 \ar[d] \\
0 \ar[r] & P \oplus P \ar@{=}[r] \ar[d] & P \oplus P \ar[r] \ar[d] & 0 \ar[d] \\
0 \ar[r] & F \oplus F \ar[r] \ar[d] & W \oplus W' \ar[r] \ar[d] & Q \oplus Q \ar[r] \ar@{=}[d] & 0 \\
0 \ar[r] & R \oplus R \ar[r] \ar[d] & E \oplus E \ar[r] \ar[d] & Q \oplus Q \ar[r] \ar[d] & 0 \\
& 0 & 0 & 0
}} \end{equation} 
We now pull back along the diagonal map on the bottom row and push out along the \emph{difference} map on the left column.  The result is a diagram~\eqref{eqn:66}:
\begin{equation} \label{eqn:66} \vcenter{ \xymatrix{
& 0 \ar[d] & 0 \ar[d] \\
0 \ar[r] & P \ar@{=}[r] \ar@{=}[d] & P \ar[r] \ar[d] & 0 \ar[d] \\
0 \ar[r] & P \ar[r] \ar[d] & X \ar[r] \ar[d] & Q \ar@{=}[d] \ar[r] & 0 \\
 & 0 \ar[r] & Q \ar@{=}[r] \ar[d] & Q \ar[r] \ar[d] & 0 \\
& & 0 & 0
}} \end{equation}
This is simply an extension $W' - W$ of $Q$ by $P$.

This defines a morphism $\bExtPan(E,F)^2 \to \bExt^1(Q,P)$ sending $(W,W')$ to $W' - W$.  It remains to see that this construction induces an equivalence $\bExtPan(E,F)^2 \to \bExtPan(E,F) \times \bExt^1(Q,P)$ by the formula $(W,W') \mapsto (W, W'-W)$.

	It will help to think of $\bExtPan(E,F)$ as a covariantly fibered groupoid with respect to the $F$ variable.  That is, let $\bExtPan(E,-)$ be the $2$-category of diagrams~\eqref{eqn:62}, with the extension~\eqref{eqn:101} fixed.  The projection $\bExtPan(E,-) \to \bExt^1(R,-)$ from $\bExtPan(E,-)$ to the category of extensions of the fixed object $R$ by a variable object $P$ is a covariant fibration.%
	\footnote{In older terminology, this would have been called a cofibered category (or cofibered $2$-category), but that usage is fraught because there is a model structure in which these projections are fibrations.}
	
	For each pair of extensions $F$ and $F'$ of $R$ by $P$, the fiber product $F \mathop\times_R F'$ is an extension of $R$ by $P \oplus P$.  This has two projections, to $F$ and $F'$, which yield a functor:
	\begin{equation} \label{eqn:41}
		\bExtPan(E,F \mathop\times_R F') \to \bExtPan(E,F) \times \bExtPan(E,F')
	\end{equation}
	This functor is an equivalence.  Its inverse sends a pair of variegated extensions $(W,W')$ to $W \mathop\times_E W'$.

	We observe that $F \mathop\times_R F = F \mathop\times_R P^\ast$, where $P^\ast$ is the following split extension:
	\begin{equation} \label{eqn:42}
		0 \to P \to P \oplus R \to R \to 0
	\end{equation}
	We may now conclude by observing that all functors in the following chain are equivalences:
	\begin{multline} \label{eqn:43}
		\bExtPan(E,F)^2 \leftarrow \bExtPan(E,F \mathop\times_R F) \\ \simeq 
		\bExtPan(E, F \mathop\times_R P^\ast) \rightarrow \bExtPan(E,F) \times \bExtPan(E,P^\ast) \\
		\simeq \bExtPan(E,F) \times \bExt^1(Q,P)
	\end{multline}
	The last equivalence uses Remark~\ref{rem:R=0}.

%It will help to recognize the above construction in two steps: first we pull back $W \oplus W'$ along the diagonal $E \to E \oplus E$; the result is a variegated extension of $E$ by $F' = F \mathop\times_R F$, where $F'$ is the pullback of $F \oplus F$ along the diagonal $R \to R \oplus R$; then we push this extension out along $F' \to P^\ast$, where $P^\ast$ is the extension~\eqref{eqn:67}, 
%\begin{equation} \label{eqn:67}
%0 \to P \to P \to 0 \to 0
%\end{equation}
%using the map that restricts to the \emph{difference} map $P \oplus P \to P$.  The result is a variegated extension of $Q^\ast$ by $P^\ast$, where $Q^\ast$ is the extension~\eqref{eqn:68}:
%\begin{equation} \label{eqn:68}
%0 \to 0 \to Q \to Q \to 0
%\end{equation}
%We then observe that $\bExtPan(Q^\ast,P^\ast) = \bExt^1(Q,P)$ by Remark~\ref{rem:R=0}.
%
%The sum $W + X$ is defined similarly:  we first pull back along the diagonal $E \to E \oplus E$ and then push out along $F' \to P^\ast$, this time by the \emph{addition} map $P \oplus P \to P$.  
%
%The map $\bExtPan(E,F)^2 \to \bExtPan(E,F) \times \bExt^1(Q,P)$ can now be understood as pullback along the identity $E \oplus E \to E \oplus E$, followed by pushout along $P \oplus P \to P \oplus P$ given by the matrix $\begin{pmatrix} 1 & -1 \\0 & \hphantom{-}1 \end{pmatrix}$.  The action map is given similarly, this time by the matrix $\begin{pmatrix} 1 & 1 \\ 0 & 1 \end{pmatrix}$, which is of course the inverse.  This completes the proof.
\end{proof}

\begin{proposition}
Given extensions $E$ and $F$ as above, the obstruction to finding a variegated extension $E$ by $F$ is the cup product $F \cup E \in \Ext^2(Q,P)$. 
\end{proposition}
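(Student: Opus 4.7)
The plan is to translate the existence of a variegated extension into a lifting problem for ordinary $1$-extensions and then identify the obstruction via the long exact $\Ext$ sequence attached to $F$.

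First, I would observe that specifying a variegated extension of $E$ by $F$ is equivalent to specifying an extension
\begin{equation*}
0 \to F \to W \to Q \to 0,
\end{equation*}
that is, a class $[W] \in \Ext^1(Q,F)$, together with an identification of the induced extension $0 \to R \to W/P \to Q \to 0$ with $E$. Indeed, setting $W_{-1} = P \subset W_0 = F \subset W_1 = W$ gives $W_0/W_{-1} = R$, $W_1/W_{-1} = W/P$, and $W_1/W_0 = Q$, matching the definition in~\eqref{eqn:62}; conversely, any variegated extension yields such a $W$ on taking $W = W_1$. Hence the existence of a variegated extension is equivalent to the question of whether $[E] \in \Ext^1(Q,R)$ lies in the image of the map $\pi_\ast \colon \Ext^1(Q,F) \to \Ext^1(Q,R)$ induced by $\pi \colon F \to R$.

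Next, I would apply $\bExt(Q,-)$ to the extension $0 \to P \to F \to R \to 0$ to obtain the long exact sequence
\begin{equation*}
\Ext^1(Q,F) \xrightarrow{\pi_\ast} \Ext^1(Q,R) \xrightarrow{\delta} \Ext^2(Q,P),
\end{equation*}
so that a lift of $[E]$ exists if and only if $\delta([E]) = 0$. The last step is to recall the standard identification of the connecting map $\delta$ with Yoneda multiplication by the class $[F] \in \Ext^1(R,P)$ used to form the sequence: $\delta([E]) = [F] \cup [E]$. Combining these identifications exhibits $F \cup E \in \Ext^2(Q,P)$ as precisely the obstruction.

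The main delicacy is the interpretation of $\delta$ as cup product, and of $\Ext^2(Q,P)$ itself, in the strictly commutative $2$-group context envisaged earlier in Section~\ref{sec:variegated}. For abelian sheaves this is standard derived-category formalism; for strictly commutative $2$-groups one should view these groups as truncations of the appropriate derived $\Hom$, whereupon the identification of $\delta$ with Yoneda splicing is again formal. A secondary point of care is checking that the equivalence in the first step, between variegated extensions and liftings $[W] \mapsto [E]$, is an equivalence of groupoids and not merely a bijection on isomorphism classes; this should follow from the $5$-lemma (already invoked in the text to see that $\bExtPan(E,F)$ is a groupoid) applied to morphisms of diagrams of the shape~\eqref{eqn:62}.
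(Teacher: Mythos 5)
Your proof is correct, but it is organized differently from the paper's. You recast a variegated extension as a lift of $[E]$ along $\pi_\ast\colon \Ext^1(Q,F)\to\Ext^1(Q,R)$ and read off the obstruction as the connecting map $\delta$ in the long exact sequence attached to $0\to P\to F\to R\to 0$, then identify $\delta([E])$ with the Yoneda product $[F]\cup[E]$. The paper instead works at the level of objects: it represents $F\cup E$ directly by the spliced four-term complex~\eqref{eqn:60}, observes that a splitting of this $2$-extension is exactly a butterfly diagram~\eqref{eqn:61}, and rearranges the butterfly into the form~\eqref{eqn:62}. The two arguments encode the same homological fact (the connecting homomorphism is Yoneda splicing), but the paper's diagrammatic version applies verbatim to strictly commutative $2$-groups, where the long exact $\Ext$ sequence and the identification of $\delta$ with the cup product require precisely the derived-category interpretation you flag as the delicate point; it also exhibits an equivalence between the groupoid of variegated extensions and the groupoid of trivializations of $F\cup E$, rather than only a criterion for nonemptiness. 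Your reduction of a variegated extension to an extension of $Q$ by $F$ together with an identification of its pushout with $E$ (the middle and bottom rows of~\eqref{eqn:62}) is accurate, and your remark about upgrading the bijection on isomorphism classes to an equivalence of groupoids is well placed, though it is not needed for the existence statement being proved.
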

\begin{proof}
The cup product $F \cup E$ is the complex~\eqref{eqn:60}:
\begin{equation} \label{eqn:60}
0 \to P \to F \to E \to Q \to 0
\end{equation}
Splittings of this sequence are butterflies:\footnote{This is implicit in \cite[3.1]{sga7-VII}, and explicit (as well as generalized to a nonabelian context) in \cite{butterflies}, which is also the source of the term.}
\begin{equation} \label{eqn:61} \vcenter{\xymatrix{
&&& 0 \ar[dr] &&&& 0 \\
0 \ar[rr] && P \ar[rr] \ar@{=}[dd] && P \ar[dr] \ar[rr]^0 && Q \ar[rr] \ar[ur] && Q \ar@{=}[dd] \ar[rr] && 0 \\
&&&&& W \ar[ur] \ar[dr]  \\
0 \ar[rr] & & P \ar[rr] && F \ar[rr] \ar[ur] && E \ar[rr] \ar[dr] && Q \ar[rr] && 0 \\
&&& 0 \ar[ur] &&&& 0
}} \end{equation}
We leave it to the reader to rearrange this diagram into the form~\eqref{eqn:62} to make clear that $W$ is a variegated extension of $E$ by $F$.
\end{proof}

If $\varphi : \mathscr C \to \mathscr C'$ is an exact functor between abelian categories, or an exact functor between $2$-categories of strictly commutative $2$-groups over different sites, and $E$ and $F$ are exact sequences in $\mathscr C$, as above, then we have a functor:
\begin{equation} \label{eqn:29}
\bExtPan_{\mathscr C}(E,F) \to \bExtPan_{\mathscr C'}(\varphi E,\varphi F)
\end{equation}
The various manifestations of the monodromy pairing that we will encounter in subsequent sections all represent the obstructions to lifting a variegated extension of $\varphi E$ by $\varphi F$ to one of $E$ by $F$.  Provided that $\Ext^2_{\mathscr C}(E,F) = 0$, there is at least one variegated extension of $E$ by $F$ in $\bExtPan_{\mathscr C}(E,F)$ and by the choice of one, we may identify $u : \bExtPan_{\mathscr C}(E,F) \simeq \bExt^1_{\mathscr C}(Q,P)$ and $v : \bExtPan_{\mathscr C'}(\varphi E, \varphi F) \simeq \bExt^1_{\mathscr C'}(\varphi Q,\varphi P)$.  The functor $\varphi$
 induces a morphism~\eqref{eqn:30}
\begin{equation} \label{eqn:30}
\varphi : \bExt^1_{\mathscr C}(Q,P) \to \bExt^1_{\mathscr C'}(\varphi Q,\varphi P)
\end{equation}
The functor~\eqref{eqn:30} is compatible with the actions of $\bExt^1$ on $\bExtPan$, and therefore commutes with the identifications $u$ and $v$.  It follows that the obstruction to lifting a variegated extension of $\varphi E$ by $\varphi F$ lies canonically in the quotient~\eqref{eqn:31} (which we construct as a stack):
\begin{equation} \label{eqn:31}
\bExt^1_{\mathscr C'}(\varphi Q, \varphi P) \Big/ \bExt^1_{\mathscr C}(Q,P)
\end{equation}
The construction of the obstruction in~\eqref{eqn:31} depends on the choice of a variegated extension of $E$ by $F$ in $\mathscr C$.  Since the choices of this variegated extension form a torsor under $\bExt^1_{\mathscr C}(Q, P)$, the obstruction in~\eqref{eqn:31} is independent of this choice.  By descent, only the local vanishing of $\Ext^2(Q,P)$ is necessary to produce an obstruction in~\eqref{eqn:31}.

\section{The logarithmic monodromy pairing}
\label{sec:monodromy}

Let $M = [Y \to G]$ be a logarithmic $1$-motif, where $G$ is an extension of the abelian variety $A$ by the logarithmic semiabelian variety $T = \uHom(X, \logGm)$.

Let $E^{\log}$ and $F^{\log}$ be the following extensions, deduced from the weight filtration:
\begin{gather} 
0 \to T \to G \to A \to 0 \tag{$F^{\log}$} \\
0 \to G/T \to M/T \to \mathrm B Y \to 0 \tag{$E^{\log}$}
\end{gather}
Then the weight filtration gives $M$ the structure of a variegated extension of $E^{\log}$ by $F^{\log}$.

By Proposition~\ref{prop:semiabelian}, the extension $E^{\log}$ is induced from a unique extension $E^{\rm alg}$:
\begin{equation}
0 \to T^{\rm alg} \to G^{\rm alg} \to A \to 0 \tag{$E^{\rm alg}$}
\end{equation}
The extension $F^{\log}$ coincides with the extension $F^{\rm alg}$:
\begin{equation}
	0 \to A \to [A/Y] \to \mathrm BY \to 0 \tag{$F^{\rm alg}$}
\end{equation}
Denote by $E^{\rm trop}$ and $F^{\rm trop}$ the following two trivial extensions:
\begin{gather}
0 \to T/T^{\rm alg} \to G/G^{\rm alg} \to 0 \to 0 \tag{$F^{\rm trop}$} \\
0 \to 0 \to [M/G] \to \mathrm B Y \to 0 \tag{$E^{\rm trop}$}
\end{gather}
We note that $T/T^{\rm alg} = \uHom(X, \tropGm)$.  We have an exact sequence:
\begin{equation*}
0 \to \bExtPan(E^{\rm alg}, F^{\rm alg}) \to \bExtPan(E^{\rm log}, F^{\log}) \to \bExtPan(E^{\rm trop}, F^{\rm trop}) 
\end{equation*}
It follows that the obstruction to lifting $M$ to a variegated extension of $E^{\rm alg}$ by $F^{\rm alg}$ is the induced variegated extension of $E^{\rm trop}$ by $F^{\rm trop}$.  On the other hand, $\gr_0 W_\bullet M^{\rm trop} = 0$, so we have a canonical identification~\eqref{eqn:4}:
\begin{multline} \label{eqn:4}
\bExtPan(E^{\rm trop}, F^{\rm trop}) = \bExt^1(\mathrm B Y, \uHom(X, \tropGm)) \\ = \Hom(Y, \uHom(X, \tropGm)) = \Hom(X \otimes Y, \tropGm)
\end{multline}
The image of the variegated extension $M^{\rm trop}$ is therefore a pairing $X \times Y \to \tropGm$, called the \emph{logarithmic monodromy pairing}.

\setcounter{theorem}{\value{equation}}
\begin{theorem}
The monodromy pairing of a logarithmic abelian variety with constant degeneration coincides with the pairing defined in Section~\ref{sec:log-1-motifs}.
\end{theorem}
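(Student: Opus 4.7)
The plan is to verify that both pairings coincide as homomorphisms $X \otimes Y \to \logGm/\Gm$, or equivalently (via tensor--Hom adjunction) as homomorphisms $Y \to T/T^{\rm alg} = \Hom(X, \logGm/\Gm)$. In both cases I claim the map is the composition $Y \to G \to G/G^{\rm alg}$.

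First, I would unwind the variegated extension $M^{\rm trop}$ explicitly. By construction, $M^{\rm trop}$ is obtained from the weight filtration of $M = [Y \to G]$ by passing to tropical quotients, with $P = T/T^{\rm alg}$, $F = G/G^{\rm alg}$, $R = 0$, $Q = \mathrm B Y$, and middle term $W = M/G^{\rm alg}$. Since $M$ is the $2$-group quotient $[Y \to G]$, we have $M/G^{\rm alg} = [Y \to G/G^{\rm alg}] = [Y \to T/T^{\rm alg}]$, which sits in the short exact sequence
\begin{equation*}
0 \to T/T^{\rm alg} \to [Y \to T/T^{\rm alg}] \to \mathrm B Y \to 0.
\end{equation*}
Under Remark~\ref{rem:R=0}, this is the extension class of $M^{\rm trop}$ in $\bExt^1(\mathrm B Y, T/T^{\rm alg})$.

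Next, I would invoke the standard identification $\bExt^1(\mathrm B Y, P) = \Hom(Y, P)$ for $P$ a sheaf of abelian groups and $Y$ a sheaf of lattices: a $2$-group of the form $[Y \to P]$ corresponds to its structural map $Y \to P$. Applied here, the extension $[Y \to T/T^{\rm alg}]$ corresponds to the composition $Y \to G \to G/G^{\rm alg} = T/T^{\rm alg}$. Chasing through the further identifications in~\eqref{eqn:4}, this is precisely the pairing $\langle x, y \rangle$ of Section~\ref{sec:log-1-motifs}.

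The main subtlety lies in the $2$-categorical bookkeeping: one must verify that the quotient $M/G^{\rm alg}$ really is the $2$-group $[Y \to T/T^{\rm alg}]$ as a variegated extension of $E^{\rm trop}$ by $F^{\rm trop}$, and that the correspondence $\bExt^1(\mathrm B Y, P) = \Hom(Y, P)$ sends $[Y \to P]$ to its structural map with the correct sign. Once these identifications are confirmed, the theorem reduces to the tautology that the homomorphism $Y \to G$ built into the presentation $M = [Y \to G]$ is the same datum that defines the pairing $\langle x, y \rangle$ in Section~\ref{sec:log-1-motifs}.
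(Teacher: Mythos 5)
Your proposal is correct and follows essentially the same route as the paper: the paper's proof likewise identifies the class of $M^{\rm trop}$ under~\eqref{eqn:4} with the extension $0 \to \Hom(X,\logGm/\Gm) \to M^{\rm trop} \to \mathrm B Y \to 0$, and observes that up to rotation this is the sequence induced by the composition $Y \to G \to G^{\rm trop}$ defining the pairing of Section~\ref{sec:log-1-motifs}. Your version merely makes explicit the intermediate identifications ($M/G^{\rm alg} = [Y \to T/T^{\rm alg}]$ and $\bExt^1(\mathrm B Y, P) = \Hom(Y,P)$) that the paper leaves implicit.
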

\begin{proof}
Under the identifications~\eqref{eqn:4}, the class of $M^{\rm trop}$ is the extension~\eqref{eqn:5}:
	\setcounter{equation}{\value{theorem}}
\begin{equation} \label{eqn:5}
0 \to \Hom(X, \tropGm) \to M^{\rm trop} \to \mathrm B Y \to 0
\end{equation}
Up to rotation, this is exactly the sequence induced from the pairing defined in Section~\ref{sec:log-1-motifs}.
\end{proof}

\setcounter{subsection}{\value{equation}}
\subsection{The monodromy pairing for logarithmic Jacobians}

In order to state the following corollary, we recall the definition of the \emph{tropical intersection pairing} (or \emph{edge length pairing}) of a tropical curve~\cite[12.4]{sga7-IX}.  Suppose that $\mathfrak C$ is a tropical curve, metrized by a monoid $\overnorm M$ (see \cite{logpic}).  We take $\mathbf Z^E$ to be the free abelian group of oriented edges, with the convention that $-e$ denotes $e$ with its orientation reversed.  Then the tropical intersection pairing on $\mathfrak C$ is the homomorphism
\setcounter{equation}{\value{subsection}}
\begin{equation} \label{eqn:40}
	\partial : \mathbf Z^E \to \Hom(\mathbf Z^E, \overnorm M)
\end{equation}
defined on the basis of oriented edges by the following formula:
\begin{equation} 
	\partial(e).f = \begin{cases} \ell(e) & e = f \\ -\ell(e) & e = -f \\ 0 & \text{otherwise} \end{cases}
\end{equation}
When $\mathfrak C$ is the tropicalization of a logarithmic curve $C$ over $S$, the edge $e$ corresponds to a node with a local equation $xy = t$ for some $t \in M_S$; we have $\overnorm M = \overnorm M_S$ and the length $\ell(e)$ is the image of $t$ in $\overnorm M_S$.

We sometimes abuse notation and also write $\partial$ for the map $\mathbf Z^E \times \mathbf Z^E \to \overnorm M$.  We use the same symbol also to denote the restriction of the pairing to $H_1(\mathfrak C) \subset \mathbf Z^E$.

\setcounter{theorem}{\value{equation}}
\begin{corollary} \label{cor:curves}
Suppose that $J$ is the logarithmic Jacobian of a family of proper, vertical, saturated, and logarithmically smooth logarithmic curves $C$ over $S$ with constant degeneracy.  Then the monodromy pairing of $J$ is the tropical intersection pairing of $C$.
\end{corollary}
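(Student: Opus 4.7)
The plan is to reduce the corollary to an explicit identification of the data $[Y\to G]$ underlying the logarithmic Jacobian. By the preceding theorem, once we exhibit $A$ as a logarithmic $1$-motif $[Y\to G]$, the monodromy pairing is computed as the composition $Y\to G\to G^{\rm trop}=\uHom(X,\logGm/\Gm)$, which under the adjunction $\Hom(Y,\Hom(X,\logGm/\Gm))=\Hom(X\otimes Y,\logGm/\Gm)$ gives the pairing. We therefore never have to touch variegated extensions again: the entire computation takes place at the level of the uniformization.

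First, I would recall the logarithmic uniformization of the logarithmic Jacobian from \cite{logpic}. For a family of proper vertical logarithmic curves $X/S$ with dual tropical curve $\frak X$, the logarithmic Jacobian $A$ is of the form $[Y\to G]$ with $Y=H_1(\frak X,\mathbf Z)$ and $G$ an extension of $\mathrm{Pic}^0$ of the partial normalization (the abelian part) by the logarithmic torus $T=\uHom(X,\logGm)$ whose character lattice $X$ is again $H_1(\frak X,\mathbf Z)$; the identification $X\simeq Y$ comes from the principal polarization. I would next identify the map $Y\to G^{\rm trop}$ by inspecting the construction of \cite{logpic}: a cycle $\gamma\in H_1(\frak X)\subset\mathbf Z^E$ is sent to the homomorphism $X\to\logGm/\Gm=\overnorm M_S^{\rm gp}$ that takes a cycle $\gamma'$ to $\sum_{e} \gamma(e)\gamma'(e)\ell(e)$, that is, to the edge-length pairing of $\gamma$ with $\gamma'$. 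In other words, the composition $Y\to G\to G^{\rm trop}$ is exactly the homomorphism $\partial\colon H_1(\frak X)\to\Hom(H_1(\frak X),\overnorm M_S^{\rm gp})$ of \eqref{eqn:40}.

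With these two identifications in place, the corollary is immediate: by the preceding theorem the logarithmic monodromy pairing of $A$ is the pairing $X\otimes Y\to\overnorm M_S^{\rm gp}$ adjoint to $Y\to G^{\rm trop}=\uHom(X,\overnorm M_S^{\rm gp})$, and by the step above this adjoint is nothing but $\partial$ restricted to $H_1(\frak X)\times H_1(\frak X)$, which is the definition of the tropical intersection pairing of $X$.

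The substantive part of the argument, and the only real obstacle, is the identification in the second step: showing that the lattice map $Y\to G$ in the uniformization of the logarithmic Jacobian really does project to the edge-length pairing on $G^{\rm trop}$. This is the tropical Abel--Jacobi statement and is already contained in \cite{logpic}; I would simply cite the relevant construction there, noting that on the level of the tropicalization $G^{\rm trop}=\uHom(H_1(\frak X),\overnorm M^{\rm gp})$ the logarithmic Abel--Jacobi map sends a boundary $\partial(\sum n_e e)$ to the divisor that, after the standard piecewise-linear computation on the tropical curve, is measured by the edge lengths in precisely the manner defining $\partial$. Once this is granted, no further calculation is required.
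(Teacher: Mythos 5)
Your proposal is correct and follows essentially the same route as the paper: the paper's proof likewise reduces everything to the uniformization of the logarithmic Jacobian and cites the proof of \cite[Theorem~4.15.7]{logpic} for the fact that the resulting extension~\eqref{eqn:5} (equivalently, the composition $Y \to G \to G^{\rm trop}$ from Section~\ref{sec:log-1-motifs}) is the tropical intersection pairing. Your version merely spells out the intermediate identifications that the paper leaves implicit.
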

\begin{proof}
By the proof of~\cite[Theorem~4.15.7]{logpic}, the extension~\eqref{eqn:5} is given by the tropical intersection pairing.
\end{proof}

\section{The dual logarithmic $1$-motif}
\label{sec:dual}

In this section, we will give a second construction of the monodromy pairing using the dual logarithmic $1$-motif.  This section is meant to provide comparisons to other perspectives on the monodromy pairing, and is not used in the rest of the paper.

Let $M = [G/Y]$ and $\hat M = [\hat G/\hat Y]$ be logarithmic $1$-motifs over a logarithmic scheme $S$.  A biextension of $M$ and $\hat M$ is a biextension of $G$ and $\hat G$ with trivializations over $Y \times_S \hat G$ and $G \times_S \hat Y$ that agree over $Y \times_S \hat Y$.  A biextension of $M$ and $\hat M$ by $\tropGm$ is equivalently any of the following data:
\begin{enumerate}[label=(\roman{*})]
	\item a homomorphism $M \to \bExt^1(\hat M, \logGm)$;
	\item a homomorphism $\hat M \to \bExt^1(M, \logGm)$;
	\item a homomorphism $M \otimes \hat M \to \mathrm B \logGm$ (see \cite[Section 14]{2ab}).
\end{enumerate}

If $P$ is a biextension of $M$ and $\hat M$ by $\logGm$ then it induces a biextension $\bar P$ of $M$ and $\hat M$ by $\tropGm$ via the homomorphism $\logGm \to \tropGm$.  By Lemma~\ref{lem:discrete-hom-ext}, all extensions of $G^{\rm alg}$ by $\tropGm$ are uniquely trivialized, so the biextension $\bar P$ descends to a biextension $P^{\rm trop}$ of $M^{\rm trop}$ and $\hat M^{\rm trop}$ by $\tropGm$.  We therefore obtain a commutative diagram:
\begin{equation} \label{eqn:44} \vcenter{ \xymatrix{
		M \times_S \hat M \ar[r] \ar[d] & \mathrm B \logGm \ar[d] \\
		M^{\rm trop} \times_S \hat M^{\rm trop} \ar[r] & \mathrm B \tropGm
}} \end{equation}
Since the kernel of $\mathrm B\logGm \to \mathrm B \tropGm$ is $\mathrm B\Gm$, the commutativity of~\eqref{eqn:44} implies that $P^{\rm trop}$ is the obstruction to lifting $P$ to a biextension by $\Gm$.  More specifically, if $N \to M$ and $\hat N \to \hat M$ are homomorphisms of strictly commutative $2$-groups over $S$ then trivializations of the induced biextension $P^{\rm trop} \big|_{N \times \hat N}$ are in one-to-one correspondence with lifts of $P \big|_{N \times_S \hat N}$ to biextensions of $N \times_S \hat N$ by $\Gm$.

We may make this more explicit by analyzing the data contained in the biextension of $M^{\rm trop}$ and $\hat M^{\rm trop}$ by $\tropGm$.  Let us present $M^{\rm trop}$ as $[ \uHom(X,\tropGm)/Y ]$ and $\hat M^{\rm trop}$ as $[\uHom(\hat X,\tropGm)/\hat Y]$.  Then the biextension  by $\tropGm$ is a biextension of $\uHom(X,\tropGm)$ and $\uHom(\hat X,\tropGm)$ by $\tropGm$ with trivializations along $\uHom(X, \tropGm) \times_S \hat Y$ and along $Y \times_S \uHom(\hat X, \tropGm)$ that agree on $Y \times_S \hat Y$.

By Proposition~\ref{prop:tropical-hom-ext}, every extension of $\uHom(\hat X, \tropGm)$ by $\tropGm$ is trivial.  This gives us the identification~\eqref{eqn:51}, using Proposition~\ref{prop:tropical-hom-ext} again:
\begin{equation} \label{eqn:51}
	\bExt^1\bigl( \:\uHom(\hat X, \tropGm), \tropGm \bigr) = \mathrm B \: \uHom\bigl( \: \uHom( \hat X, \tropGm ), \tropGm \bigr)  = \mathrm B \hat X
\end{equation}
Thus a biextension of $\uHom(X, \tropGm)$ and $\uHom(\hat X, \tropGm)$ by $\tropGm$ is specified equivalently by an extension of $\uHom(X,\tropGm)$ by $\hat X$, hence is uniquely trivialized by Proposition~\ref{prop:discrete-ext}.

Thus the biextension is encoded by pairings $\uHom(X, \tropGm) \times \hat Y \to \tropGm$ and $Y \times \uHom(X, \tropGm) \to \tropGm$ that agree on $Y \times \hat Y$.  In other words, it is given by maps $\hat Y \to X$ and $Y \to \hat X$ such that the two pairings on $Y \times \hat Y$ induced from $Y \times \hat Y \to Y \times X$ and $Y \times \hat Y \to \hat X \times \hat Y$ agree.

We will apply this next to the dual logarithmic $1$-motif, $\hat M = \bExt^1(M, \logGm)$.  We start by verifying that $\bExt^1(M, \logGm)$ is indeed a logarithmic $1$-motif:

\setcounter{theorem}{\value{equation}}
\begin{lemma} \label{lem:dual-1-motif}
	If $M$ is a logarithmic $1$-motif then so is $\bExt^1(M, \logGm)$.
\end{lemma}
\begin{proof}
	Let $M = [G/Y]$ be a logarithmic $1$-motif over $S$ with $G$ a logarithmic semiabelian variety whose torus part is $T = \uHom(X,\logGm)$ and whose abelian part is $A$.  By Lemma~\ref{lem:discrete-hom-ext}, we have $\bExt^1(A, \logGm) = \bExt^1(A,\Gm)$: the dual of an abelian variety as a logarithmic $1$-motif coincides with its dual as an abelian variety.  Noting that $\uHom(T, \logGm) = X$ by Proposition~\ref{prop:log-hom-ext}, we obtain $\bExt^1(T, \logGm) = \mathrm B X$.

	Dualizing the exact sequence~\eqref{eqn:52} we obtain~\eqref{eqn:53}:
	\setcounter{equation}{\value{theorem}}
	\begin{gather}
		0 \to Y \to A \to [M/T] \to 0 \label{eqn:52} \\
		0 \to \bExt^1([M/T], \logGm) \to \bExt^1(A, \Gm) \to \bExt^1(Y, \logGm) \label{eqn:53}
	\end{gather}
	As $\uExt^1(Y, \logGm) = 0$ (since $Y$ is locally free), the sequence can be rotated to give~\eqref{eqn:54}, which shows that $\hat G = \bExt^1([M/T],\logGm)$ is a logarithmic semiabelian variety:
	\begin{equation} \label{eqn:54}
		0 \to \uHom(Y, \logGm) \to \bExt^1([M/T], \logGm) \to \bExt^1(A, \Gm) \to 0
	\end{equation}
	Dualizing~\eqref{eqn:55} gives~\eqref{eqn:56}:
	\begin{gather} 
		0 \to T \to M \to [M/T] \to 0 \label{eqn:55} \\
		0 \to \bExt^1([M/T], \logGm) \to \bExt^1(M, \logGm) \to \bExt^1(T, \logGm) \label{eqn:56}
	\end{gather}
	We have $\uExt^1(T, \logGm) = 0$ by Proposition~\ref{prop:log-hom-ext}.  We may therefore rotate~\eqref{eqn:56} to give~\eqref{eqn:57}:
	\begin{equation} \label{eqn:57}
		0 \to X \to \bExt^1([M/T], \logGm) \to \bExt^1(M, \logGm) \to 0
	\end{equation}
	This shows that $\hat M = \bExt^1(M, \logGm) = [\hat G/X]$ is a logarithmic $1$-motif with torus part $\hat T = \uHom(Y, \logGm)$, semiabelian part $\hat G = \bExt^1([M/T], \logGm)$, and abelian part $\hat A = \bExt^1(A, \Gm)$.
\end{proof}

\setcounter{theorem}{\value{equation}}
\begin{remark}
	The dual logarithmic~$1$-motif of Lemma~\ref{lem:dual-1-motif} coincides with the dual logarithmic~$1$-motif of \cite[Section~2.7]{KKN2}.  We recall the construction of op.\ cit., in our notation.  The dual logarithmic $1$-motif was defined as follows: first $\hat G^{\rm alg}$ is constructed as the semiabelian variety of $Y$-split extensions of $A$ by $\Gm$, where $Y \to A$ is the composition $Y \to G \to A$ \cite[Section~2.7.1]{KKN2}.  We may identify these with extensions of $[M/T]=[A/Y]$ by $\Gm$.  Then $\hat G$ is constructed as the associated logarithmic semiabelian variety~\cite[Section~2.7.2]{KKN2}, which by Proposition~\ref{prop:semiabelian} coincides with the logarithmic semiabelian variety of $Y$-split extensions of $A$ by $\logGm$, or equivalently, extensions of $[M/T]=[A/Y]$ by $\logGm$.  Finally, $X \to \hat G$ sends each $x \in X$ to the $Y$-split extension associated with it by pushout along $x : T \to \logGm$, which is exactly the same as the extension of $[M/T]=[A/Y]$ induced along the map $X \to \bExt^1([M/T], \logGm)$ in~\eqref{eqn:57}.
\end{remark}

Let $M = [G/Y]$ be a logarithmic $1$-motif with torus part $\uHom(X,\logGm)$ and let $\hat M$ be its dual.  In the notation preceding the lemma, we have $\hat X = Y$ and $\hat Y = X$.  By the discussion preceding the lemma, the biextension realizing the duality between $M$ and $\hat M$ is given by homomorphisms $Y \to \hat X = Y$ and $X = \hat Y \to X$, namely the identity morphisms, that restrict to the same pairing on $Y \times \hat Y = Y \times X$.

We have the following immediate consequence of the construction:

\begin{proposition}
	The pairing on $Y \times X$ induced from the duality between $M^{\rm trop}$ and $\hat M^{\rm trop}$ coincides with the monodromy pairing.
\end{proposition}

\setcounter{subsection}{\value{theorem}}
\subsection{Component groups}
\label{sec:comp-grps}

In a slightly more concrete situation, this obstruction can be made more explicit.  Assume that $P$ is a biextension of $M$ and $\hat M$ by $\logGm$ over $S$, and that $N$ and $\hat N$ are smooth commutative group schemes over $S$ with component groups $\Phi$ and $\hat \Phi$.  Assume that homomorphisms $N \to M$ and $\hat N \to \hat M$ have been specified.  The biextension $P^{\rm trop}$ of $M^{\rm trop}$ and $\hat M^{\rm trop}$ by $\tropGm$ restricts to a biextension of $N$ and $\hat N$, and this descends by Lemma~\ref{lem:discrete-hom-ext} to a biextension of $\Phi$ and $\hat\Phi$ by $\tropGm$ that we denote $Q$.

The universal example of this construction is to take $N$ and $\hat N$ to be the preimages of the maximal \'etale subgroups of $M^{\rm trop}$ and $\hat M^{\rm trop}$.  That is, let $\Phi$ and $\hat\Phi$ be the restrictions of $M^{\rm trop}$ and $\hat M^{\rm trop}$ to the small, strict \'etale site of $S$ and use the same symbols to denote their espaces \'etal\'es.  Let $N$ and $\hat N$ be their preimages in $M$ and $\hat M$.  Based on Lemma~\ref{lem:neron-model}, below, it would be reasonable to call these the N\'eron models of $M$ and $\hat M$.  (See \cite{HMOP} for much more about this perspective on N\'eron models.)

The exact sequence~\eqref{eqn:69} induces a long exact sequence~\eqref{eqn:70}:
\setcounter{equation}{\value{subsection}}
\begin{gather}
	0 \to \tropGm \to \mathbf Q \otimes \tropGm \to \mathbf Q/\mathbf Z \otimes \tropGm \to 0 \label{eqn:69} \\
	\begin{split}
		\uBilin(\Phi,\hat\Phi;\mathbf Q \otimes \tropGm) \to \uBilin(\Phi,\hat\Phi; \mathbf Q/\mathbf Z \otimes \tropGm)  \to & \uBiext(\Phi,\hat\Phi;\tropGm) \\ & \to \uBiext(\Phi,\hat\Phi;\mathbf Q \otimes \tropGm) 
	\end{split}
	\label{eqn:70} 
\end{gather}
If there is an integer $n > 0$ such that $n \Phi = 0$ or $n \hat\Phi = 0$ then all bilinear maps $\Phi \times \hat\Phi \to \mathbf Q \otimes \tropGm$ and all biextensions of $\Phi$ and $\hat\Phi$ by $\mathbf Q$ vanish.  Therefore the biextension $Q$ is induced from a unique bilinear pairing $\Phi \times \hat\Phi \to \mathbf Q/\mathbf Z \otimes \tropGm$, and this bilinear pairing obstructs the existence of a biextension of $N$ and $\hat N$ by $\Gm$ inducing $P \big|_{N \times \hat N}$.

\setcounter{subsection}{\value{equation}}
\subsection{Logarithmic structures of rank~$1$}
\label{sec:rank-1}

Let us specialize further to the case where $\overnorm M_S = \mathbf N$.  For example, $S$ might be the closed point of a discrete valuation ring, with its standard logarithmic structure.

The following proposition is analogous to \cite[Th\'eor\`eme~11.5]{sga7-IX}, but is essentially trivial from the tropical perspective:

\setcounter{theorem}{\value{subsection}}
\begin{proposition} \label{prop:neron-Z}
	Let $M^{\rm trop} = [ \uHom(X, \tropGm) / Y ]$ be a tropical $1$-motif and let $\Phi$ be defined as above.  Then $\Phi = [ \uHom(X,\mathbf Z) / Y ]$.
\end{proposition}
\begin{proof}
	By assumption, we have $\overnorm M_S^{\rm gp} = \mathbf Z$, so this is only a matter of interpreting the definition.
\end{proof}

\begin{corollary}
	Let $M^{\rm trop}$ and $\hat M^{\rm trop}$ be dual tropical $1$-motifs and let $\Phi$ and $\hat\Phi$ be defined as above.  Then the restriction of the duality biextension from $M^{\rm trop} \times \hat M^{\rm trop}$ to $\Phi \times \hat\Phi$ puts $\Phi$ and $\hat\Phi$ into duality.
\end{corollary}
\begin{proof}
	By Proposition~\ref{prop:neron-Z}, we have an exact sequence~\eqref{eqn:71}, which dualizes to~\eqref{eqn:72}:
	\setcounter{equation}{\value{theorem}}
	\begin{gather}
		0 \to Y \to \uHom(X,\mathbf Z) \to \Phi \to 0 \label{eqn:71}  \\
		0 \to X \to \uHom(Y,\mathbf Z) \to \bExt^1(\Phi, \mathbf Z) \to 0 \label{eqn:72} 
	\end{gather}
	Since we also have $\hat\Phi = [ \uHom(Y, \mathbf Z) / X ]$, we conclude that $\bExt^1(\Phi,\mathbf Z) = \hat\Phi$.
\end{proof}

By the discussion at the end of Section~\ref{sec:comp-grps}, this duality is equivalently a perfect pairing:
\begin{equation*}
	\Phi \times \hat\Phi \to \mathbf Q/ \mathbf Z
\end{equation*}

\setcounter{subsection}{\value{equation}}
\subsection{N\'eron models}
\label{sec:neron}

In order to relate the discussion in Sections~\ref{sec:comp-grps} and~\ref{sec:rank-1} to \cite{sga7-VIII}, \cite{sga7-IX}, and \cite{Gillibert}, we have to consider a degenerating family of abelian varieties, so we must temporarily relax the assumption that our logarithmic $1$-motifs have constant degeneracy.  Instead, we will assume that $S$ is the spectrum of a discrete valuation ring and that $M$ and $\hat M$ are logarithmic abelian varieties extending abelian varieties $M_\eta$ and $\hat M_\eta$ over the general fiber.  We write $N$ and $\hat N$ for the N\'eron models of $N_\eta = M_\eta$ and $\hat N_\eta = \hat M_\eta$.

Let $\Phi$ denote the restriction of $M^{\rm trop}$ to the small (strict) \'etale site of $S$.  We use the same symbol for the extension of $\Phi$ to the strict \'etale site of all logarithmic schemes over $S$.  Then $\Phi$ comes with a canonical map to $M^{\rm trop}$.

\setcounter{theorem}{\value{subsection}}
\begin{lemma} \label{lem:neron-model}
	The preimage of $\Phi$ along $M \to M^{\rm trop}$ is the N\'eron model of $M_\eta$.  Hence $\Phi$ is the component group of the N\'eron model of $M$.
\end{lemma}
\begin{proof}
	Write $N$ for the preimage of $\Phi$ in $M$.  We must show that $N$ is smooth and separated and has the extension property for \'etale points \cite[\S1.2, Criterion 9]{BLR}.  Logarithmic smoothness and separatedness are inherited from $M$.  But $\Phi$ is strict over $S$, so $N$ is also strict over $S$, which, together with its logarithmic smoothness, implies it is smooth over $S$.

	Finally, suppose that $S'$ is the spectrum of a discrete valuation ring and is strict and \'etale over $S$.  Then a map $S'_\eta \to M$ extends uniquely to $S' \to M$ by the properness of $M$ over $S$ \cite[Proposition~11.3]{KKN4}.  But $S'$ is strict and \'etale over $S$, so the induced map $S' \to M^{\rm trop}$ must factor through $\Phi$.  Therefore $S' \to M$ factors uniquely through $N$, as required.
\end{proof}

We define $\hat\Phi$ by applying the same construction to $\hat M$.  As illustrated in~\eqref{eqn:44}, the biextension $\Phi \times \hat\Phi \to \mathrm B\tropGm$ obstructs lifts of the canonical biextension $M \times \hat M \to \mathrm B\logGm$ to $\mathrm B \Gm$.

Since $\Phi$ and $\hat \Phi$ are \'etale over $S$ and trivial over $\eta$, the biextension $\Phi \times \hat\Phi \to \mathrm B \tropGm$ is uniquely determined by its restriction to the small \'etale site of the closed fiber.  This places us in the situation of Section~\ref{sec:rank-1}, where it is equivalent to a perfect pairing:
\begin{equation*}
	\Phi \times \hat\Phi \to \mathbf Q/\mathbf Z \otimes \overnorm M_S^{\rm gp} = \mathbf Q/\mathbf Z
\end{equation*}

This recovers \cite[Th\'eor\`eme~7.2~b)]{sga7-VIII} and proves \cite[Conjecture~1.3]{sga7-IX} for dual abelian varieties over the generic point of a discrete valuation ring that can be extended to dual logaritihmic abelian varieties.

We also note that $\mathbf Q/\mathbf Z \otimes \tropGm$ is trivial in the Kummer logarithmic flat topology (see Proposition~\ref{prop:divisible}), so we obtain a canonical biextension of $N$ and $\hat N$ by $\Gm$ in the Kummer logarithmic flat topology.  This gives \cite[Th\'eor\`eme~4.1.1 and Proposition~4.2.1]{Gillibert}, again for dual abelian varieties that can be extended to dual logarithmic abelian varieties.

\section{The torsion realization}
\label{sec:torsion}

In this section, we will have to go back and forth between the logarithmic flat topology and the strict flat topology.  The strict flat topology is generated by strict fppf covers.  The logarithmic flat topology can refer either to the Kummer logarithmic flat topology, or to a finer full logarithmic flat topology in which the covers are finitely presented, logarithmic flat universal surjections (all of these properties are stable under fine and saturated base change and under composition \cite[Corollary~4.12]{Olsson03}, so this does indeed form a topology).  The generating covers of the Kummer logarithmic flat topology have the additional requirement of being Kummer.

It will be important that logarithmic $1$-motifs are divisible in the logarithmic flat topology, but not divisible in the strict flat topology.

\begin{proposition} \label{prop:divisible}
Logarithmic $1$-motifs are locally divisible in the logarithmic flat topology on the base.
\end{proposition}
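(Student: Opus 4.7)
My plan is to reduce divisibility of a logarithmic abelian variety $A = [A_1/Y]$ to divisibility of the graded pieces of its weight filtration, handling each piece by a standard argument in the appropriate topology. Fix an integer $n \geq 1$; the goal is to show that multiplication by $n$ on $A$ is an epimorphism in the logarithmic flat topology.

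First I would handle the logarithmic torus part $T = \uHom(X,\logGm)$. It suffices to show that $n : \logGm \to \logGm$ is an epimorphism in the logarithmic flat topology, since $X$ is free. Given a local section $m$ of $M_S^{\rm gp}$, the Kummer extension of logarithmic structures that formally adjoins an $n$-th root of $m$ defines a logarithmic flat cover on fs bases, on which $m$ acquires an $n$-th root by construction. Passing to $\uHom(X,-)$ transfers this divisibility to $T$. The main subtlety in this step is the verification that the Kummer cover is indeed log flat, a standard fact which depends on the fine saturated hypothesis.

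Next, the abelian part $B$ of $A_1$ is divisible in the strict flat topology, because $n : B \to B$ is finite flat and surjective. Applying the snake lemma to multiplication by $n$ on the rows of
\begin{equation*}
0 \to T \to A_1 \to B \to 0
\end{equation*}
and combining with the previous step shows that $A_1$ is divisible in the logarithmic flat topology.

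Finally, since $A_1 \to A$ realizes $A$ as the quotient of $A_1$ by the locally constant sheaf of lattices $Y$ in the sense of strictly commutative $2$-groups, any local section of $A$ lifts, strict flat locally on the base, to a local section of $A_1$. Given a local section $a$ of $A$, lift to $\tilde a \in A_1$; use the previous step to find $\tilde b \in A_1$ with $n\tilde b = \tilde a$ after passing to a logarithmic flat cover; then the image $b$ of $\tilde b$ in $A$ satisfies $nb = a$. The principal obstacle is the first step, as the rest is largely formal once divisibility for $\logGm$ is established.
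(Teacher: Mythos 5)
Your proof is correct and follows essentially the same route as the paper: reduce along the weight filtration to the abelian, toric, and lattice-quotient pieces, with the only genuinely logarithmic input being that Kummer covers are covers in the log flat topology. The paper differs only in a cosmetic way, splitting $\uHom(X,\logGm)$ further into its $\Gm$- and $\logGm/\Gm$-parts before invoking Kummer covers, whereas you divide sections of $\logGm$ in one step; both are fine.
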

\begin{proof}
A logarithmic $1$-motif is a quotient of an extension of an abelian variety by a logarithmic torus, $\Hom(X, \logGm)$, which is an extension of $\Hom(X, \tropGm)$ by an algebraic torus.  Quotients of divisible groups are divisible, and abelian varieties and algebraic tori are locally divisible in the strict flat topology, so it remains to see that $\Hom(X, \tropGm)$ is locally divisible in the logarithmic flat topology.  This follows from the fact that Kummer extensions are covers in the logarithmic flat topology: a homomorphism $X \to \overnorm M_S^{\rm gp}$ is the $n$-th multiple of a homomorphism $X \to \frac{1}{n} \overnorm M_S^{\rm gp}$ and the logarithmic schemes $f : S' \to S$ with a factorization of $f^{-1} \overnorm M_S^{\rm gp} \to \overnorm M_{S'}^{\rm gp}$ through $\frac{1}{n} f^{-1} \overnorm M_S^{\rm gp}$ form a cover in the logarithmic flat topology.
\end{proof}

Passing to $n$-torsion subgroups is exact for divisible groups, so the filtration~\eqref{eqn:3} induces a filtration~\eqref{eqn:6}
\begin{align} 
W_{-1} M[n] & = T[n] = T^{\rm alg}[n] = \Hom(X, \mu_n) \notag \\
W_0 M[n] & = G[n] = G^{\rm alg}[n] \label{eqn:6} \\
W_1 M[n] & = M[n] \notag
\end{align}
with the following graded pieces:
\begin{align*}
\gr_{-1} W_\bullet M [n] & = \Hom(X, \mu_n) \\
\gr_0 W_\bullet M [n] & = A[n] \\
\gr_1 W_\bullet M [n] & = n^{-1} Y / Y \simeq Y/nY
\end{align*}
We introduce a subscript $n$ to denote the cokernel of the multiplication by $n$ map, so that $\gr_1 W_\bullet M[n] = Y_n$.

Furthermore, $M[n]$ has the structure of a variegated extension of $E[n]$ by $F[n]$:
\begin{gather}
	0 \to T[n] \to G[n] \to A[n] \to 0 \tag{$F[n]$} \label{eqn:104} \\
	0 \to A[n] \to M[n]/T[n] \to Y_n \to 0 \tag{$E[n]$} \label{eqn:103}
\end{gather}
Note that, in contrast to the filtration~\eqref{eqn:3}, the filtration~\eqref{eqn:6} is an honest filtration, in the sense that $W_i M \subset W_{i+1} M$ for all $i$.  

Let $\Lambda$ be the ring $\mathbf Z/n\mathbf Z$.  Both of the exact sequences $E[n]$ and $F[n]$ are pulled back from exact sequences on the strict flat site of $S$.  Since $\uExt^2_{\fppf,\Lambda}(Y_n, \Hom(X,\mu_n)) = 0$, there is at least one variegated extension of $E[n]$ by $F[n]$ locally on the strict flat site of $S$, and therefore the obstruction to descending $M[n]$ to the strict flat site is its class in the quotient:
\begin{multline} \label{eqn:7}
\Omega = \bExt^1_{\logfppf,\Lambda}(Y_n, \uHom(X_n, \mu_n)) \Big/ \bExt^1_{\fppf,\Lambda}(Y_n, \uHom(X_n, \mu_n)) \\
= \bExt^1_{\logfppf,\Lambda}(X_n \otimes Y_n, \mu_n) \Big/ \bExt^1_{\fppf,\Lambda}(X_n \otimes Y_n, \mu_n)
\end{multline}

To calculate the class of the variegated extension $M[n]$, we relate it to the variegated extension $A$ using the following commutative diagram:
\begin{equation*} \xymatrix{
0 \ar[d] & 0 \ar[d] & 0 \ar[d] \\
\uHom(X \otimes Y, \Gm) \ar[r]^-{[n]} \ar[d] & \uHom(X \otimes Y, \Gm) \ar[r] \ar[d] & \bExt^1_{\fppf,\Lambda}(X_n \otimes Y_n, \mu_n) \ar[r] \ar[d] & 0 \\
\uHom(X \otimes Y, \logGm) \ar[r]^-{[n]} \ar[d] & \uHom(X \otimes Y, \logGm) \ar[r] \ar[d] & \bExt^1_{\Lambda,\logfppf}(X_n \otimes Y_n, \mu_n) \ar[r]  \ar[d] & 0 \\
\uHom(X \otimes Y, \tropGm) \ar[r]^-{[n]} \ar[d] & \uHom(X \otimes Y, \tropGm)  \ar[d] \ar@{-->}[r] & \Omega \ar[d] \ar[r] & 0 \\
0 & 0 & 0
} \end{equation*}
All of the entries in the diagram may be regarded as sheaves and stacks in the strict flat site of $S$.  In particular, we compute the quotients $\tropGm = \logGm/\Gm$ in the \emph{strict} flat site.

The diagram allows us to identify the obstruction group~\eqref{eqn:7} with the quotient~\eqref{eqn:8}:
\begin{equation} \label{eqn:8}
\uHom\bigl(X \otimes Y, \tropGm \bigr)_n =  \uHom\bigl(X \otimes Y, (\tropGm)_n \bigr)
\end{equation}
Moreover, the map~\eqref{eqn:12}
\begin{equation} \label{eqn:12}
\uHom(X \otimes Y, \tropGm) \to \uHom \bigl(X \otimes Y, (\tropGm)_n \bigr)
\end{equation}
is reduction modulo $n$, so the obstruction in~\eqref{eqn:7} is the reduction of the logarithmic monodromy pairing modulo $n$.  We have proved the following proposition:

\begin{proposition}
Under the identification of~\eqref{eqn:7} and~\eqref{eqn:8}, the class of the variegated extension $M[n]$ is the reduction of the logarithmic monodromy pairing modulo $n$.
\end{proposition}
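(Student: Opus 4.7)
The plan is to apply the functoriality of the variegated-extension obstruction (developed at the end of Section~\ref{sec:variegated}) to the $n$-torsion functor $(\mhyphen)[n]$. By the previous proposition, $M$ is divisible in the logarithmic flat topology, so $(\mhyphen)[n]$ is exact on the weight filtration~\eqref{eqn:3}, and the filtration~\eqref{eqn:6} exhibits $M[n]$ as the variegated extension of $E[n]$ by $F[n]$ obtained by applying $(\mhyphen)[n]$ to $M$, viewed as a variegated extension of $E^{\log}$ by $F^{\log}$.

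The formalism of~\eqref{eqn:29}--\eqref{eqn:31} then provides a commutative square
\[
\xymatrix{
\bExtPan_{\logfppf}(E^{\log}, F^{\log}) \ar[r]^-{(\mhyphen)[n]} \ar[d] & \bExtPan_{\logfppf,\Lambda}(E[n], F[n]) \ar[d] \\
\Hom(X \otimes Y, \logGm/\Gm) \ar[r] & \Omega
}
\]
whose vertical arrows are the obstructions to descending the variegated extension to the strict flat site (with the appropriate coefficients). The left vertical sends $M$ to the logarithmic monodromy pairing, by the theorem of Section~\ref{sec:monodromy}, while the right vertical sends $M[n]$ to the class we wish to compute. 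It therefore suffices to identify the bottom horizontal arrow with reduction modulo~$n$; this is exactly the content of the commutative diagram immediately preceding the proposition, whose third column supplies the identification~\eqref{eqn:8} and whose map~\eqref{eqn:12} is manifestly reduction modulo~$n$.

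The main obstacle is verifying the commutativity of the square above. This reduces to checking that the $\bExt^1(Q,P)$-torsor structure on $\bExtPan(E,F)$ from Section~\ref{sec:variegated} is natural under exact functors between sites of abelian sheaves, so that the obstruction-to-descent is equivariant for such functorial operations. Once this functoriality is in hand, the exactness of $(\mhyphen)[n]$ on the relevant sequences, guaranteed by the divisibility of $M$, is precisely what propagates the log monodromy pairing to the class of $M[n]$, and the result follows by reading off the identification in the bottom row.
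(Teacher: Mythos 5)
Your proposal is correct and takes essentially the same route as the paper: the paper's own argument is precisely the discussion preceding the proposition, which uses divisibility to make the $n$-torsion functor exact on the weight filtration and then reads the class of $M[n]$ off the same commutative diagram, whose dashed arrow~\eqref{eqn:12} is reduction modulo~$n$. The compatibility you flag as the ``main obstacle'' is already supplied by the functoriality discussion at the end of Section~\ref{sec:variegated}, where the map~\eqref{eqn:30} is shown to commute with the $\bExt^1$-torsor structures on $\bExtPan$.
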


Completing at a prime $\ell$, we obtain an obstruction to descending the variegated extension of $E_\ell$ by $F_\ell$ to the strict flat site in~\eqref{eqn:9}:
\begin{equation} \label{eqn:9}
\bExt^1_{\logfppf,\mathbf Z_\ell}(X_\ell \otimes Y_\ell, \mu_{\ell^\infty}) \Big/ \bExt^1_{\fppf,\mathbf Z_\ell}(X_\ell \otimes Y_\ell, \mu_{\ell^\infty}) 
= \uHom\bigl(X_\ell \otimes Y_\ell, (\tropGm)_\ell \bigr)
\end{equation}

In the case where $M$ is the special fiber of the N\'eron model of an abelian variety over a discrete valuation ring, the class of $M_\ell$ was Grothendieck's construction of the monodromy pairing \cite[\S9.6]{sga7-IX}.  Since abelian varieties with semistable reduction over the generic point of a discrete valuation ring extend to logarithmic abelian varieties \cite[Corollary~3.5]{KKN6}, we obtain the following corollary:

\begin{corollary}[Grothendieck]
	The monodromy pairing, as defined in \cite[\S9.6]{sga7-IX}, is integrally defined and independent of $\ell$.
\end{corollary}
\begin{proof}
	The monodromy pairing coincides with the completion of the class of $M$ at $\ell$ and the class of $M$ is visibly integral and independent of $\ell$.
\end{proof}

\section{The \'etale realization}
\label{sec:etale}

We reconsider the calculation of Section~\ref{sec:torsion} under the additional assumption that $n$ is invertible in $\mathcal O_S$.  We may now work in the logarithmic \'etale and the strict \'etale topologies instead of the logarithmic flat and strict flat topologies.  In this section, the logarithmic \'etale topology may refer either to the full logarithmic \'etale topology or to the Kummer logarithmic \'etale topology.

Since $X_n$ and $Y_n$ are locally free $\Lambda$-modules, any extension of $X_n \otimes Y_n$, in either the strict or logarithmic \'etale topology is determined entirely by the action of $\pi_1^{\et}(S)$ or $\pi_1^{\loget}(S)$, respectively.  The obstruction to descending a $\pi_1^{\loget}(S)$ action to a $\pi_1^{\et}(S)$-action is the action of the logarithmic inertia group, $I$, which fits into a locally split exact sequence:
\begin{equation*}
1 \to I \to \pi_1^{\loget}(S) \to \pi_1^{\et}(S) \to 1
\end{equation*}
The logarithmic inertia group operates trivially on $X_n$, $Y_n$, and $\mu_n$, so the action of $I$ on an extension of $X_n \otimes Y_n$ by $\mu_n$ is given by a homomorphism~\eqref{eqn:10}:
\begin{equation} \label{eqn:10}
I \to \Hom(X_n \otimes Y_n, \mu_n)
\end{equation}
Any such homomorphism factors through the maximal $n$-torsion quotient of $I$:
\begin{equation} \label{eqn:73}
I_n \simeq \Hom(n^{-1} \overnorm M_S^{\rm gp} / \overnorm M_S^{\rm gp}, \mu_n) \simeq \Hom(\overnorm M_S^{\rm gp}, \mu_n)
\end{equation}
Therefore the homomorphism~\eqref{eqn:10} lies in
\begin{equation} \label{eqn:11}
\Hom \bigl( \uHom(\overnorm M_S^{\rm gp}, \mu_n), \uHom(X_n \otimes Y_n, \mu_n) \bigr) = \Hom \bigl( X_n \otimes Y_n, (\tropGm)_n \bigr)
\end{equation}

In order to proceed, we must calculate the action of $I_n = \Hom(\overnorm M_S^{\rm gp}, \mu_n)$ on $\logGm(T)$ when $T$ is finite and Kummer \'etale over $S$.  Suppose that $\alpha \in \logGm(T) = \Gamma(T, M_T^{\rm gp})$ and $\alpha^n$ lies in the logarithmic structure of $T$ pulled back from $S$.  Then, according to the second identification of~\eqref{eqn:73}, an element $g \in \Hom(\overnorm M_S^{\rm gp}, \mu_n)$ acts on $\alpha$ by the formula
\begin{equation}\label{eqn:32}
g.\alpha = g(n \overnorm\alpha) \alpha
\end{equation}
where $\overnorm\alpha$ is the image of $\alpha$ in $\overnorm M_T^{\rm gp}$.

\setcounter{theorem}{\value{equation}}
\begin{proposition}
Under the identifications~\eqref{eqn:73} and~\eqref{eqn:11}, the image of the variegated extension $M[n]$ is the reduction of the pairing constructed in~\eqref{eqn:4}.
\end{proposition}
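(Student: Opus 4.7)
The plan is to deduce this proposition from the analogous flat statement already established at the end of Section~\ref{sec:torsion}, by verifying that the two ways of identifying the obstruction group with $\Hom(X_n \otimes Y_n, (\logGm/\Gm)_n)$ coincide. Since $n$ is invertible on $S$, the sheaves $\mu_n$, $X_n$, and $Y_n$ are all representable by étale group schemes, so the $\bExt^1$-groups appearing in~\eqref{eqn:7} can be computed equivalently in the logarithmic flat or logarithmic étale topology, and likewise for the strict versions. The obstruction quotient controlling étale descent of $M[n]$ is therefore the same $\Omega$ as in Section~\ref{sec:torsion}, and by the previous proposition the class of $M[n]$ in $\Omega$ is the reduction modulo $n$ of the logarithmic monodromy pairing.

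It thus suffices to verify that the identification of $\Omega$ with $\Hom(X_n \otimes Y_n, (\logGm/\Gm)_n)$ via the inertia action~\eqref{eqn:10}--\eqref{eqn:11} agrees with the identification provided in Section~\ref{sec:torsion}, which arises from the Kummer sequence $0 \to \mu_n \to \logGm \xrightarrow{[n]} \logGm \to 0$. Any extension of $X_n \otimes Y_n$ by $\mu_n$ in the logarithmic étale topology is determined by its $\pi_1^{\loget}(S)$-module structure, and the obstruction to descent to the strict étale topology is the residual action of the inertia $I$, factoring through $I_n$. The Section~\ref{sec:torsion} identification, by contrast, sends a pairing $\phi \in \Hom(X \otimes Y, \logGm/\Gm)$ to the $\mu_n$-torsor over $X_n \otimes Y_n$ whose fiber over $x \otimes y$ consists of $n$-th roots of a local lift $\beta \in \logGm$ of $\langle x, y \rangle = \phi(x \otimes y)$.

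The bridge between the two descriptions is the formula $g.\alpha = g(\overnorm\alpha)\alpha$ recalled in this section: if $\alpha$ is an $n$-th root of $\beta$ on a Kummer cover $T \to S$, then $\overnorm\alpha = \tfrac{1}{n}\langle x, y\rangle$ in $\tfrac{1}{n}\overnorm M_S^{\rm gp}$, and the $I_n$-action on the $\mu_n$-torsor of $n$-th roots is multiplication by $g(\overnorm\alpha)$. Under the canonical isomorphism $\Hom(\tfrac{1}{n}\overnorm M_S^{\rm gp}/\overnorm M_S^{\rm gp}, \mu_n) \simeq \Hom(\overnorm M_S^{\rm gp}, \mu_n)$, this is precisely evaluation at $\langle x, y\rangle$, matching the reduction modulo $n$ of the logarithmic monodromy pairing. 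The main obstacle I foresee is bookkeeping: keeping the Kummer duality conventions and the various $\bExtPan$-to-$\bExt^1$ identifications consistent between the two descriptions so that signs line up. The underlying computation is essentially tautological once the dictionary is set.
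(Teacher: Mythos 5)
Your proposal is correct and follows essentially the same route as the paper: reduce to checking that the inertia-action identification of the obstruction group agrees with the Kummer-sequence identification from the torsion section, then compute the $I_n$-action on a local $n$-th root $\alpha$ of a lift of the pairing via $g.\alpha = g(\overnorm\alpha)\alpha$ with $\overnorm\alpha = n^{-1}\partial$. The paper organizes this comparison into an explicit commutative diagram with exact columns after localizing in the strict \'etale topology, but the substance of the argument is the same as yours.
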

\begin{proof}
We wish to show that the identifications of the obstruction group~\eqref{eqn:7} with homomorphisms~\eqref{eqn:11} given in this section and in the last coincide.  The assertion is local in the strict \'etale topology, so we may assume that $X_n$, $Y_n$, and $\mu_n$ are all constant sheaves on $S$.  We now have a commutative diagram with exact columns:
\begin{equation*} \vcenter{ \xymatrix{
0 \ar[d] &  & 0 \ar[d] \\
\uHom(X \otimes Y, \Gm) \ar[r] \ar[d] & \uExt^1_{\et,\Lambda}(X_n \otimes Y_n, \mu_n) \ar@{=}[r] \ar[d] & \uHom(\pi_1^{\et}(S), \uHom(X_n \otimes Y_n, \mu_n)) \ar[d] \\
\uHom(X \otimes Y, \logGm)  \ar[r] \ar[d] & \uExt^1_{\loget,\Lambda}(X_n \otimes Y_n, \mu_n) \ar@{=}[r]  & \uHom(\pi_1^{\loget}(S), \uHom(X_n \otimes Y_n, \mu_n)) \ar[d] \\
\uHom(X \otimes Y, \tropGm)  \ar[d] \ar@{-->}[rr] & & \uHom(I, \uHom(X_n \otimes Y_n, \mu_n)) \ar[d] \\
0 &  & 0
}} \end{equation*}
	All of the entries in the diagram are sheaves in the small, strict \'etale site of $S$.  We choose a local lift $\tilde{\partial}$ of the logarithmic monodromy pairing $\partial$ to $\Hom(X \otimes Y, \logGm)$.  Then the image of $\tilde{\partial}$ in $\Hom(\pi_1^{\loget}(S), \uHom(X \otimes Y, \mu_n) )$ is the obstruction to lifting this pairing along $[n] : \logGm \to \logGm$.  We can choose such a lift $\alpha$ locally in the logarithmic \'etale topology, and the action of $g \in I_n = \Hom(\overnorm M_S^{\rm gp}, \mu_n)$ on $\alpha : X \otimes Y \to M_{S'}^{\rm gp}$ is by~\eqref{eqn:32}.
\setcounter{equation}{\value{theorem}}
%\begin{equation} 
%g . \alpha = g(n \overnorm\alpha) \alpha
%\end{equation}
Note that $\overnorm\alpha \in n^{-1} \overnorm M_S^{\rm gp} \cap \overnorm M_{S'}^{\rm gp}$.  But $\overnorm\alpha = n^{-1} \partial$, by assumption, so the homomorphism $X_n \otimes Y_n \to (\tropGm)_n$ corresponding by~\eqref{eqn:11} to the image of $\partial$ is the reduction of $\partial$ modulo~$n$.
\end{proof}

\section{The Betti realization}
\label{sec:betti}

In this section, we will apply the realization functor of \cite{KKN-pic,KKN1}.  All logarithmic schemes will be of finite type over $\mathbf C$, and all morphisms will be morphisms over $\mathbf C$; we continue to assume that logarithmic schemes are fine and saturated, except we will have need of one non-integral logarithmic structure to construct the Kato--Nakayama space.

We write $S^{\rm KN}$ for the Kato--Nakayama space of a logarithmic scheme $S$.  Following \cite[(1.2)]{KN}, one may construct it by evaluating $\Hom(P,S)$, where $P$ is the scheme $\Spec \mathbf C$, with the logarithmic structure $\mathbf R_{\geq 0} \times S^1(1)$ (where $S^1(1) = \mathbf R(1) / \mathbf Z(1) = i \mathbf R / 2 \pi i \mathbf Z$),%
\footnote{According to our conventions, $P$ is a scheme with a logarithmic structure, but not a logarithmic scheme, because its logarithmic structure is not integral.}
the homomorphism
\begin{equation*}
	\varepsilon : \mathbf R_{\geq 0} \times S^1(1) \to \mathbf C
\end{equation*}
given by $\varepsilon(\lambda, \mu) = \lambda \mu$.  The topology on $S^{\rm KN}$ is the weakest so that every local section of $\mathcal O_S$ determines a continuous map to $\mathbf C$ and every local section of $M_S^{\rm gp}$ determines a continuous map to $\mathbf R_{\geq 0} \times S^1(1)$.

Similarly, we write $S^{\rm an}$ for the analytification of the underlying scheme of $S$.  This can be constructed by the same procedure as above.  Its point set is $\Hom(\Spec \mathbf C, S)$, topologized in the coarsest way so that any local section of $\mathcal O_S$ induces a continuous function to $\mathbf C$.  There is a projection $S^{\KN} \to S^{\rm an}$ by restricting a logarithmic morphism $P \to S$ to its underlying morphism of schemes.

While a logarithmic semiabelian variety $G$ over a logarithmic scheme $S$ is not a logarithmic scheme, it still has a well-defined Kato--Nakayama space, $G^{\rm KN}$.  The underlying set of $G^{\rm KN}$ is the set of maps $\Hom(P,G)$, as above.  This set is given the finest topology so that for all logarithmic schemes $U$ over $S$, all maps $U \to G$ induce continuous maps $U^{\rm KN} \to G^{\rm KN}$.  This allows us to form the Kato--Nakayama space of a logarithmic $1$-motif $M = [G/Y]$, in which $G$ is a logarithmic semiabelian variety over $S$ and $Y$ is a lattice over $S$, as the topological stack $[G^{\rm KN}/Y^{\rm KN}]$, where $Y^{\rm KN}$ is the Kato--Nakayama space of the espace \'etal\'e of $Y$.

\begin{example} \label{ex:log-sav-KN}
The associated group of $\mathbf R_{\geq 0} \times S^1(1)$ is $S^1(1)$, so the Kato--Nakayama space of $\logGm$ is $S^1(1)$.  This implies that the map $\Gm^{\KN} \to \logGm^{\KN}$ is surjective, with kernel $\mathbf R_{> 0}$.
\end{example}

The following proposition generalizes the example:

\begin{proposition}
	If $G$ is a logarithmic semiabelian variety over $S$ and $Q$ is a $G$-torsor over $S$ then $Q^{\rm KN}$ is proper over $S^{\rm KN}$.
\end{proposition}
\begin{proof}
	Since $G$ is an extension of an abelian variety $A$ by a logarithmic torus $T$, its Kato--Nakayama space $G^{\rm KN}$ is an extension of $A^{\rm KN}$ by $T^{\rm KN}$.  The Kato--Nakayama space $A^{\rm KN}$ is the pullback of the analytification $A^{\rm an}$ of $A$, which is proper over $S^{\rm an}$, along the map $S^{\rm KN} \to S^{\rm an}$.  Replacing $S$ with $A$, we can therefore assume that $G = T$ is a logarithmic torus and that $Q \simeq T$.  Working locally, we can also assume that $T$ is split, and treating one factor at a time, we can assume that $T = \logGm$.  We are in the situation of Example~\ref{ex:log-sav-KN}, where $T^{\rm KN} = S^1(1) \times S^{\rm KN}$, which is certainly proper over $S^{\rm KN}$.
\end{proof}

\begin{proposition} \label{prop:surj-kn}
	If $f : X \to Y$ is a finite type universal surjection of logarithmic schemes then the induced morphism of Kato--Nakayama spaces $X^{\rm KN} \to Y^{\rm KN}$ is surjective.
\end{proposition}
\begin{proof}
	Since formation of the Kato--Nakayama space commutes with strict base change, we may assume that the underlying scheme of $Y$ is $\Spec \mathbf C$.  Then $Y^{\rm KN} = \Hom(\overnorm M_Y^{\rm gp}, S^1(1))$.  Since $X$ is of finite type, there are finitely many distinct monoid homomorphisms $\overnorm M_Y \to \overnorm M_{X,x}$, as $x$ ranges over geometric points of $X$.  We argue that at least one of these must be injective.  It will then follow that 
	\begin{equation*}
		x^{\rm KN} = \Hom(\overnorm M_{X,x}^{\rm gp}, S^1(1)) \to \Hom(\overnorm M_Y^{\rm gp}, S^1(1)) = Y^{\rm KN}
	\end{equation*}
	is surjective (since $S^1(1)$ is divisible) and therefore that $X^{\rm KN} \to Y^{\rm KN}$ is surjective, as required.

	Suppose that $\varphi : \overnorm M_Y^{\rm gp} \to \mathbf Q$ is any homomorphism.  Let $Y'$ have the same underlying scheme as $Y$, but give $Y'$ a logarithmic structure whose characteristic monoid is the saturation of $\varphi(\overnorm M_Y)$ (which is isomorphic to $\mathbf N$) and such that the induced map $\overnorm M_Y^{\rm gp} \to \overnorm M_{Y'}^{\rm gp}$ is $\varphi$.  Let $X'$ be the base change of $X$ to $Y'$.  Then $X'$ is nonempty because $f$ was assumed universally surjective, so we may choose a geometric point $x'$ of $X'$.  Since $\overnorm M_{Y'}^{\rm gp} \simeq \mathbf N$, the map $\overnorm M_{Y'} \to \overnorm M_{X',x'}$ must be injective.  Therefore the inclusion homomorphism $\overnorm M_{Y'}^{\rm gp} \to \mathbf Q$ extends to $\overnorm M_{X',x'}^{\rm gp}$.  In particular, every homomorphism $\varphi : \overnorm M_Y^{\rm gp} \to \mathbf Q$ extends to $\overnorm M_{X,x}^{\rm gp} \to \mathbf Q$ for some $x \in X$.  

	It follows that the maps of rational vector spaces $\Hom(\overnorm M_{X,x}^{\rm gp}, \mathbf Q) \to \Hom(\overnorm M_Y^{\rm gp}, \mathbf Q)$ are jointly surjective.  But there are only finitely many distinct vector spaces $\Hom(\overnorm M_{X,x}^{\rm gp}, \mathbf Q)$, so at least one of these must be surjective.  The corresponding map $\overnorm M_Y^{\rm gp} \to \overnorm M_{X,x}^{\rm gp}$ must therefore be injective.
\end{proof}

\begin{remark}
	I thank the referee for bringing to my attention that some finiteness hypothesis is necessary in Proposition~\ref{prop:surj-kn}, if universal surjectivity is defined as in \cite[p.\ 671]{Nakayama}.  That is, a morphism of fine and saturated logarithmic schemes $X \to Y$ is universally surjective if, for every fine and saturated logarithmic scheme $Y'$ and every morphism $Y' \to Y$, the base change $X' \to Y'$ is surjective on the underlying schemes.

	As an example, we may take $Y$ to be a point, with characteristic monoid $\mathbf N^2$.  For each homomorphism $\varphi : \mathbf N^2 \to \mathbf N$, let $X_\varphi$ be the universal logarithmic scheme over $Y$ such that the map $\mathbf N^2 \to \overnorm M_{X_\varphi}$ factors through $\varphi$.  Let $X$ be the disjoint union of all $X_\varphi$.

	I claim that the projection $X \to Y$ is universally surjective.  Indeed, if $Y' \to Y$ is a morphism of logarithmic schemes and $y'$ is a geometric point of $Y'$ then there must be at least one $\varphi : \mathbf N^2 \to \mathbf N$ that factors through $\overnorm M_Y \to \overnorm M_{Y',y'}$ (since $\overnorm M_{Y',y'}$ is finitely generated).  Therefore $y' \to Y$ factors through $X$, so $X \to Y$ is universally surjective.

	On the other hand, the Kato--Nakayama space of $Y$ is $S^1(1) \times S^1(1)$ while the Kato--Nakayama space of $X_\varphi$ is $S^1(1)$.  The map $X^{\rm KN}_\varphi \to Y^{\rm KN}$ winds around the two circles at commensurate rates.  In particular, if $a, b \in \mathbf R(1)$ are points whose ratio is irrational then the image of $(a,b) \in \mathbf R(1) \times \mathbf R(1)$ in $S^1(1) \times S^1(1) = Y^{\rm KN}$ does not lie in the image of $X_\varphi^{\rm KN}$ for any $\varphi$.

	If universal surjectivity is defined to require surjecitivity after base change to points with valuative logarithmic strutures (that do not necessarily have charts by finitely generated monoids) then the finite type hypothesis in Proposition~\ref{prop:surj-kn} can be suppressed and the proof can be simplified.
\end{remark}

Since taking the Kato--Nakayama space is left exact by definition, the proposition implies that the filtration~\eqref{eqn:3} induces a filtration~\eqref{eqn:14} on $M^{\KN}$:
\setcounter{equation}{\value{theorem}}
\begin{align} 
W_{-1} M^{\KN} & = T^{\KN} = \uHom(X^{\rm KN}, \logGm^{\KN}) = \uHom(X^{\rm KN}, S^1(1)) \notag \\
W_0 M^{\KN} & = G^{\KN} \label{eqn:14} \\
W_1 M^{\KN} & = M^{\KN} \notag
\end{align}
It has the following graded pieces:
\begin{align*}
	\gr_{-1} W_\bullet M^{\rm KN} = T^{\rm KN} \\
	\gr_0 W_\bullet M^{\rm KN} = A^{\rm KN} \\
	\gr_1 W_\bullet M^{\rm KN} = \mathrm B Y^{\rm KN}
\end{align*}
It also gives $A^{\KN}$ the structure of a variegated extension of $E^{\KN}$ by $F^{\KN}$ over $S^{\KN}$:
\begin{gather}
0 \to T^{\KN} \to G^{\KN} \to A^{\KN} \to 0 \tag{$F^{\KN}$} \\
	0 \to A^{\KN} \to [A^{\KN} / Y] \to \mathrm B Y^{\KN}\to 0 \tag{$E^{\KN}$}
\end{gather}
We note that these sequences are induced from sequences $E^{\rm an}$ and $F^{\rm an}$ on $S^{\rm an}$:
\begin{gather}
0 \to T^{\rm an} \to G^{\rm an} \to A^{\rm an} \to 0 \tag{$F^{\rm an}$} \\
	0 \to A^{\rm an} \to [A^{\rm an} / Y^{\rm an}] \to \mathrm B Y^{\rm an} \to 0 \tag{$E^{\rm an}$} 
\end{gather}
Here $T^{\rm an}= \uHom(X, \Gm^{\rm an}) = \uHom(X, \mathbf C^\ast)$ and $G^{\rm an}$ is the analytification of the semiabelian variety $G^{\rm alg}$ underlying $G$.

Let $\uExt^1_{S^{\KN}}(-, \mathbf Z)$ and $\uExt^1_{S^{\rm an}}(-, \mathbf Z)$ denote, respectively, the sheaves of extensions by $\mathbf Z$ over the topological spaces $S^{\KN}$ and $S^{\rm an}$.  We note that for an input with connected fibers over $S$, the sheaf $\uHom(-, \mathbf Z)$ is trivial, so that $\uExt^1(-, \mathbf Z)$ coincides with the presheaf of extensions.

\begin{lemma}
We have vanishings~\eqref{eqn:34} and~\eqref{eqn:35}:
\begin{gather} 
\uExt^2_{S^{\KN}}(A^{\KN}, \mathbf Z) = \uExt^2_{S^{\rm an}}(A^{\rm an}, \mathbf Z) = 0 \label{eqn:34} \\
\uExt^2_{S^{\KN}}(\mathrm B Y^{\KN}, \mathbf Z) = \uExt^2_{S^{\rm an}}(\mathrm B Y^{\rm an}, \mathbf Z) = 0 \label{eqn:35}
\end{gather}
\end{lemma}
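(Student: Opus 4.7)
The lemma comprises four vanishings, split into a $\mathrm{B} Y$ case and an $A$ case on each of $S^{\KN}$ and $S^{\rm an}$; the two topoi will be handled identically in outline, so I focus on $S^{\KN}$ first.

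For $\uExt^2(\mathrm{B} Y^{\KN}, \mathbf{Z})$, the plan is to invoke Deligne's equivalence between strictly commutative Picard stacks and two-term complexes of abelian sheaves concentrated in degrees $[-1, 0]$. Under this equivalence, $\mathrm{B} Y^{\KN}$ corresponds to the complex $[Y^{\KN} \to 0]$, which is quasi-isomorphic to $Y^{\KN}[1]$. Derived $\uHom$ will furnish the identification $\uExt^i(\mathrm{B} Y^{\KN}, \mathbf{Z}) = \uExt^{i-1}(Y^{\KN}, \mathbf{Z})$; for $i = 2$, it suffices to observe that $\uExt^1(Y, \mathbf{Z}) = 0$, which holds because $Y$ is locally constant with finitely generated free stalks (locally $Y \cong \mathbf{Z}^n$).

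For $\uExt^2(A^{\KN}, \mathbf{Z})$, I apply $\uExt^*(-, \mathbf{Z})$ to the exponential short exact sequence
\[
0 \to \Lambda \to V \to A^{\KN} \to 0,
\]
where $V$ is the sheaf of continuous sections of the real Lie algebra bundle $\Lie(A)^{\KN}$ and $\Lambda$ is the local system of period lattices. The long exact sequence yields
\[
\uExt^1(\Lambda, \mathbf{Z}) \to \uExt^2(A^{\KN}, \mathbf{Z}) \to \uExt^2(V, \mathbf{Z}),
\]
with the leftmost term vanishing by the same locally-free-lattice argument as above. The problem is reduced to the vanishing $\uExt^2(V, \mathbf{Z}) = 0$.

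This last vanishing is the main obstacle. The plan is to exploit the two-term resolution $0 \to \mathbf{Z} \to \mathbf{Q} \to \mathbf{Q}/\mathbf{Z} \to 0$. Since multiplication by $n$ acts invertibly on $V$ (hence on every $\uExt^i(V, -)$) while annihilating $\mathbf{Z}/n\mathbf{Z}$, one obtains $\uExt^i(V, \mathbf{Z}/n\mathbf{Z}) = 0$ in all degrees; passing to the filtered colimit then forces $\uExt^i(V, \mathbf{Q}/\mathbf{Z}) = 0$. A stalk-local analysis---using that $V$ is a sheaf of $\mathbf{Q}$-vector spaces and that extensions of $\mathbf{Q}$-vector spaces by $\mathbf{Q}$ split---gives $\uExt^i(V, \mathbf{Q}) = 0$ for $i \geq 1$. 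Feeding both vanishings into the long exact sequence coming from the resolution produces $\uExt^2(V, \mathbf{Z}) = 0$. The case of $S^{\rm an}$ is entirely parallel, using the holomorphic exponential sequence and that $\Lie(A)^{\rm an}$, being an $\mathcal{O}$-module, is uniquely divisible as a sheaf of abelian groups.
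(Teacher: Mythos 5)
Your overall strategy matches the paper's, which is extremely terse here: for~\eqref{eqn:35} the paper's proof is precisely your shift $\uExt^2(\mathrm B Y,-)=\uExt^1(Y,-)$ plus local freeness of $Y$, and for~\eqref{eqn:34} the paper says only that $A^{\KN}$ and $A^{\rm an}$ are families of complex tori, leaving the reader to supply the uniformization argument you carry out (the author does exactly this, locally writing $A\simeq V/L$, in the later Lemma~\ref{lem:ext1}). Your reduction of~\eqref{eqn:34} to $\uExt^1(\Lambda,\mathbf Z)=0$ and $\uExt^2(V,\mathbf Z)=0$ is the right move.

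However, two steps in your proof of $\uExt^2(V,\mathbf Z)=0$ do not work as written. First, $\uExt^i(V,-)$ has no reason to commute with the filtered colimit $\mathbf Q/\mathbf Z=\operatorname{colim}\mathbf Z/n$: the representable sheaf $V$ is not a compact object of the category of abelian sheaves, so $\uExt^i(V,\mathbf Z/n)=0$ does not formally yield $\uExt^i(V,\mathbf Q/\mathbf Z)=0$. Second, ``extensions of $\mathbf Q$-vector spaces by $\mathbf Q$ split'' only controls $\uExt^1(V,\mathbf Q)$, whereas your long exact sequence needs $\uExt^2(V,\mathbf Q)=0$. Both gaps are repairable, and in the degree you actually need, cheaply: from $0\to\mathbf Z\to\mathbf Q\to\mathbf Q/\mathbf Z\to 0$ it suffices to prove $\uExt^1(V,\mathbf Q/\mathbf Z)=0$ and $\uExt^2(V,\mathbf Q)=0$. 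The former follows with no colimit at all: $V$ has contractible fibers over the base and $\mathbf Q/\mathbf Z$ is discrete, so the underlying torsor of any extension is locally trivial and the resulting symmetric cocycle $V\times_S V\to\mathbf Q/\mathbf Z$ is locally constant on connected fibers, hence normalizable (the same argument the paper uses in Lemma~\ref{lem:discrete-hom-ext}). For the latter, note that $V$ is a sheaf of $\mathbf Q$-vector spaces, so $R\uHom_{\mathbf Z}(V,\mathbf Q)=R\uHom_{\mathbf Q}(V,\mathbf Q)$; the contractibility of $V$ and of its fiber powers then reduces the bar-complex computation to extensions of abstract $\mathbf Q$-vector spaces, which vanish in all positive degrees by semisimplicity --- this is where unique divisibility is genuinely used, and it requires more than the degree-one splitting statement you invoke. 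With these two repairs your argument is complete and consistent with the paper's intent.
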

\begin{proof}
In the case of~\eqref{eqn:34}, this is because $A$ is a family of abelian varieties, so $A^{\rm an}$ and $A^{\KN}$ are families of complex tori, over $S^{\rm an}$ and $S^{\KN}$, respectively.  In the case of~\eqref{eqn:35}, we identify $\uExt^2(\mathrm B Y, -)$ with $\uExt^1(Y, -)$, which vanishes because $Y$ is locally free.
\end{proof}

\begin{lemma} \label{lem:ext-Z}
We have canonical, compatible isomorphisms~\eqref{eqn:24} and~\eqref{eqn:25}:
\begin{gather}
\uExt^1_{S^{\rm an}}(\Gm^{\rm an}, \mathbf Z) = \mathbf Z(-1) \label{eqn:24} \\
\uExt^1_{S^{\KN}}(\logGm^{\KN}, \mathbf Z) = \mathbf Z(-1) \label{eqn:25}
\end{gather}
\end{lemma}
\begin{proof}
	Let $\mathcal O_{S^{\rm an}}$ denote the sheaf of holomorphic functions on $S^{\rm an}$ and let $\bar{\mathscr L}_{S^{\rm KN}}$ be the sheaf of continuous functions $S^{\rm KN} \to \mathbf R(1)$ such that the composition $S^{\rm KN} \to \mathbf R(1) \to S^1(1)$ is locally induced from a section of $M_S^{\rm gp}$.  We have exact sequences:
	%Let $\undernorm{\mathbf C}$ denote the sheaf of $S^{\rm an}$ whose sections are holomorphic maps valued in $\mathbf C$ (in otherwords, the sheaf represented by $(\mathbf A^1)^{\rm an}$, when $\mathbf A^1$ is given the trivial logarithmic structure).  Let $\undernorm{\mathbf R}(1)$ be the sheaf of continuous functions valued in $\mathbf R$ that are locally representable by sections of $M_S$.  The conclusions are immediate from the exact sequences~\eqref{eqn:26} and~\eqref{eqn:27} and the vanishing of $\uExt^i(\undernorm{\mathbf C}, \mathbf Z)$ and $\uExt^i(\undernorm{\mathbf R}, \mathbf Z)$ for all $i$:
\begin{gather}
	0 \to \mathbf Z(1) \to \mathcal O_{S^{\rm an}} \to \Gm^{\rm an} \to 0 \label{eqn:26} \\
	0 \to \mathbf Z(1) \to \bar{\mathscr L}_{S^{\rm KN}} \to \logGm^{\KN} \to 0 \label{eqn:27}
\end{gather}
These induce long exact sequences:
	\begin{gather}
		\uHom_{S^{\rm an}}(\mathcal O_{S^{\rm an}}, \mathbf Z) \to \mathbf Z(-1) \to \uExt^1_{S^{\rm an}}(\Gm^{\rm an}, \mathbf Z) \to \uExt^1_{S^{\rm an}}(\mathcal O_{S^{\rm an}}, \mathbf Z) \\
		\uHom_{S^{\rm KN}}(\bar{\mathscr L}_{S^{\rm KN}}, \mathbf Z) \to \mathbf Z(-1) \to \uExt^1_{S^{\rm KN}}(\logGm^{\rm KN}, \mathbf Z) \to \uExt^1_{S^{\rm KN}}(\bar{\mathscr L}_{S^{\rm KN}}, \mathbf Z)
	\end{gather}
	Since $\mathcal O_{S^{\rm an}}$ and $\bar{\mathscr L}_{S^{\rm KN}}$ are sheaves of real vector spaces, they are divisible, so $\uHom_{S^{\rm an}}(\mathcal O_{S^{\rm an}}, \mathbf Z) = \uHom_{S^{\rm KN}}(\bar{\mathscr L}_{S^{\rm KN}}, \mathbf Z) = 0$.  

	The lemma will follow once we demonstrate that $\uExt^1_{S^{\rm an}}(\mathcal O_{S^{\rm an}}, \mathbf Z)$ and $\uExt^1_{S^{\rm KN}}(\bar{\mathscr L}_{S^{\rm KN}}, \mathbf Z)$ vanish.  Since the argument is the same for both, we give it only in the second case.  The underlying torsor of an extension of $\bar{\mathscr L}_{S^{\rm KN}}$ by $\mathbf Z$ is a $\mathbf Z$-torsor over $S^{\rm KN} \times \mathbf R(1)$.  Since this space has contractible fibers over $S^{\rm KN}$, this torsor must be locally trivial in $S^{\rm KN}$.  An extension is therefore described by maps $\bar{\mathscr L}_{S^{\rm KN}} \times \bar{\mathscr L}_{S^{\rm KN}} \to \mathbf Z$ encoding the group structure.  But these maps must be locally constant in $S^{\rm KN}$, again because $S^{\rm KN} \times \mathbf R(1)$ has contractible fibers over $S^{\rm KN}$, so the extension is trivial.
\end{proof}

Applying $\uExt^1(-, \mathbf Z)$ and the two lemmas, we obtain the following list of exact sequences, the former two inducing the latter two:
\begin{gather}
	0 \to \uExt^1_{S^{\rm an}}(A^{\rm an}, \mathbf Z) \to \uExt^1_{S^{\rm an}}(G^{\rm an}, \mathbf Z) \to X^{\rm an}(-1) \to 0 \tag{$\uExt^1(F^{\rm an}, \mathbf Z)$} \\
	0 \to \uHom_{S^{\rm an}}(Y^{\rm an}, \mathbf Z) \to \uExt^1_{S^{\rm an}}([A^{\rm an}/Y^{\rm an}], \mathbf Z) \to \uExt^1_{S^{\rm an}}(A^{\rm an}, \mathbf Z) \to 0 \tag{$\uExt^1(E^{\rm an}, \mathbf Z)$} \\
	0 \to \uExt^1_{S^{\rm KN}}(A^{\KN}, \mathbf Z) \to \uExt^1_{S^{\rm KN}}(G^{\KN}, \mathbf Z) \to X^{\KN}(-1) \to 0 \tag{$\uExt^1(F^{\KN}, \mathbf Z)$} \label{eqn:105} \\
	0 \to \uHom_{S^{\rm KN}}(Y^{\KN}, \mathbf Z) \to \uExt^1_{S^{\rm KN}}([A^{\KN}/ Y^{\KN}], \mathbf Z) \to \uExt^1_{S^{\rm KN}}(A^{\KN}, \mathbf Z) \to 0 \tag{$\uExt^1(E^{\KN}, \mathbf Z)$} \label{eqn:106}
\end{gather}

We also have a variegated extension $\uExt^1_{S^{\rm KN}}(M^{\KN}, \mathbf Z)$ of $\uExt^1_{S^{\rm KN}}(F^{\KN}, \mathbf Z)$ by $\uExt^1_{S^{\rm KN}}(E^{\KN}, \mathbf Z)$.  As in Section~\ref{sec:variegated}, this determines a class in~\eqref{eqn:17}:
\begin{equation} \label{eqn:17}
\uExt^1_{S^{\KN}}(X^{\KN}(-1), \uHom(Y^{\KN},\mathbf Z)) \Big/ \uExt^1_{S^{\rm an}}(X^{\rm an}(-1), \uHom(Y^{\rm an}, \mathbf Z))
\end{equation}
Since $X$, $Y$, and $\mathbf Z$ are all locally free sheaves, all such extensions are determined by the actions of the fundamental groups.  We may therefore identify the quotient with actions of the inertia group $I = \ker(\pi_1(S^{\KN}) \to \pi_1(S^{\rm an})) = \Hom(\overnorm M_S^{\rm gp}, \mathbf Z(1))$.  We therefore obtain a homomorphism~\eqref{eqn:18}:
\begin{equation} \label{eqn:18}
\Hom(\overnorm M_S^{\rm gp}, \mathbf Z(1)) \to \Hom(X(-1) \otimes Y, \mathbf Z)
\end{equation}
Equivalently, it is a pairing~\eqref{eqn:19}:
\begin{equation} \label{eqn:19}
X \otimes Y \to \overnorm M_S^{\rm gp}
\end{equation}

\begin{proposition} \label{prop:betti}
The pairing~\eqref{eqn:19} coincides with the logarithmic monodromy pairing.
\end{proposition}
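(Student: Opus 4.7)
The plan is to parallel the proof of the \'etale realization in Section~\ref{sec:etale}, with the exponential sequences~\eqref{eqn:26} and~\eqref{eqn:27} replacing the Kummer sequence. Working locally on $S$ in the strict \'etale topology, we may assume $X$, $Y$, and $\mathbf Z$ are constant, so that $\uExt^1$ of locally free sheaves on $S^{\rm an}$ and $S^{\KN}$ is computed by homomorphisms from $\pi_1(S^{\rm an})$ and $\pi_1(S^{\KN})$. The obstruction quotient~\eqref{eqn:17} then becomes $\Hom(I, \Hom(X(-1) \otimes Y, \mathbf Z))$, and the identification with $\Hom(X \otimes Y, \overnorm M_S^{\rm gp})$ that produces~\eqref{eqn:19} is a Tate-twist repackaging of the identity $I = \Hom(\overnorm M_S^{\rm gp}, \mathbf Z(1))$.

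Next I would assemble the Betti analogue of the diagram used in Section~\ref{sec:etale}, applying $\Hom(X \otimes Y, -)$ to~\eqref{eqn:26} and~\eqref{eqn:27} and to the map from the former to the latter. The top two rows become short exact sequences whose connecting homomorphisms land in $\uExt^1(X \otimes Y, \mathbf Z(1))$, which is identified with $\pi_1$-cohomology as above. The induced map on bottom cokernels is the dashed arrow $\Hom(X \otimes Y, \overnorm M_S^{\rm gp}) \to \Hom(I, \Hom(X \otimes Y, \mathbf Z(1)))$ whose evaluation on $\partial$ we wish to compute.

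To carry out the computation, choose a local lift $\tilde\partial \in \Hom(X \otimes Y, \logGm^{\KN})$ of $\partial$, which exists because $M_S^{\rm gp} \to \overnorm M_S^{\rm gp}$ has local sections. Lift $\tilde\partial$ further to $\Hom(X \otimes Y, \mathbf R(1))$, which is possible locally on $S^{\KN}$ since $\mathbf R(1) \to \logGm^{\KN}$ is surjective with discrete kernel $\mathbf Z(1)$. The action of $g \in I$ on such a second lift is $g \cdot \tilde\alpha = \tilde\alpha + g(\overnorm\alpha)$, so specializing to $\tilde\alpha = \tilde\partial$ with $\overnorm\alpha = \partial$ gives the cocycle $g \mapsto g \circ \partial$. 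Under the identification $\Hom(I, \Hom(X \otimes Y, \mathbf Z(1))) = \Hom(X \otimes Y, \overnorm M_S^{\rm gp})$, this is precisely $\partial$.

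The step requiring the most care is verifying that the class of the variegated extension $\uExt^1(M^{\KN}, \mathbf Z)$ in~\eqref{eqn:17} genuinely equals the image of $\tilde\partial$ under the connecting homomorphism attached to~\eqref{eqn:27}. This is a cocycle bookkeeping exercise: the variegated structure on $M^{\KN}$ is controlled by the adjoint of $\partial$, namely the homomorphism $Y \to \uHom(X, \logGm)$ underlying the lattice inclusion $Y \to G$, and applying $\uExt^1(-, \mathbf Z)$ converts this boundary, via Yoneda pairing against~\eqref{eqn:27}, into the connecting cocycle attached to $\tilde\partial$. Once this compatibility is in place, the preceding computation of the inertia cocycle yields Proposition~\ref{prop:betti}.
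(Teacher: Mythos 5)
Your proposal is correct and follows essentially the same route as the paper: both set up the commutative diagram comparing the $\uHom(X\otimes Y,-)$ sequence for $\Gm \to \logGm \to \logGm/\Gm$ with the $\uExt^1(-,\mathbf Z(1))$ sequence via cup product with the exponential sequences~\eqref{eqn:26} and~\eqref{eqn:27}, then lift $\partial$ first to $\logGm^{\KN}$ and then to $\mathbf R(1)$ and read off the inertia cocycle $\gamma \mapsto \gamma \circ \partial$. The compatibility you flag as needing care is exactly what the paper's diagram (with cup-product horizontal arrows) is asserted to encode, so no further work is required beyond what you describe.
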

\begin{proof}
We introduce the following commutative diagram:
\begin{equation*} \xymatrix{
0 \ar[d] & 0 \ar[d] \\
\uHom_S(X \otimes Y, \Gm)^{\rm an} \ar[r] \ar[d] & \uExt^1_{S^{\rm an}}(X \otimes Y, \mathbf Z(1)) \ar[d] \\
\uHom_S(X \otimes Y, \logGm)^{\rm an} \ar[r] \ar[d] & \rho_\ast \uExt^1_{S^{\KN}}(X \otimes Y, \mathbf Z(1)) \ar[d] \\
\uHom_S(X \otimes Y, \tropGm)^{\rm an} \ar@{-->}[r]^-\sim \ar[d] & \uHom_{S^{\rm an}}(I, \uHom_{S^{\rm an}}(X \otimes Y, \mathbf Z(1))) \ar[d] \\
0 & 0
} \end{equation*}
Here $\rho$ denotes the projection from $S^{\KN}$ to $S^{\rm an}$.

The sequence on the left is the one used to construct the monodromy pairing in Section~\ref{sec:monodromy}, and the one on the right is the one used to construct the pairing~\eqref{eqn:19}.  The horizontal maps are cup product with the extensions~\eqref{eqn:26} and~\eqref{eqn:27}.

Let $\partial : X \otimes Y \to \tropGm$ be the logarithmic monodromy pairing.  Choose a local lift, $\tilde\partial : X \otimes Y \to \logGm$.  This induces $\tilde\partial^{\KN} : X \otimes Y \to \logGm^{\KN} = S^1(1)$.  Now choose a local lift $\psi : X \otimes Y \to \mathbf R(1)$ of $\tilde\partial^{\KN}$.  Traversing a loop $\gamma : \overnorm M_S^{\rm gp} \to \mathbf Z(1)$ in $I$ replaces this lift with $\psi + \gamma \circ \partial$.  Therefore the monodromy homomorphism $I \to \Hom(X \otimes Y, \mathbf Z(1))$ sends $\gamma$ to $\gamma \circ \partial$, and the monodromy pairing of Section~\ref{sec:monodromy} coincides with~\eqref{eqn:19}.
\end{proof}

\section{The Hodge realization}
\label{sec:hodge}

We continue to work over $\mathbf C$.  Recall that the Kato--Nakayama space of a logarithmic scheme $S$ is equipped with a sheaf of rings, $\mathcal O_{S^{\rm KN}}$ \cite[\S3]{KN}.  In order to adapt the construction to logarithmic semiabelian varieties, we recall it.

Let $\rho : S^{\rm KN} \to S^{\rm an}$ denote the projection from the Kato--Nakayama space to the complex analytification.  By the definition of $S^{\rm KN}$, local sections of $\rho^{-1} M_{S^{\rm an}}^{\rm gp}$ determine continuous maps $S^{\rm KN} \to S^1(1)$.  Kato and Nakayama define $\mathscr L_{S^{\rm KN}}$ to be the sheaf of local sections of $\rho^{-1} M_{S^{\rm an}}^{\rm gp}$, together with a lift of the induced map $S^{\rm KN} \to S^1(1)$ to $\mathbf R(1)$; they define $\mathcal O_{S^{\rm KN}}$ to be the sheaf of commutative rings on $S^{\rm KN}$ freely generated by the extension of additive groups $\rho^{-1} \mathcal O_{S^{\rm an}} \to \mathscr L_{S^{\rm KN}}$.  See \cite[\S3]{KN} for further details.  We will write $\bar{\mathscr L}_{S^{\rm KN}}$ for the image of $\mathscr L_{S^{\rm KN}}$ in the sheaf of continuous maps from $S^{\rm KN}$ to $\mathbf R(1)$; in other words, $\bar{\mathscr L}_{S^{\rm KN}}$ is the sheaf of continuous functions $S \to \mathbf R(1)$ that can be represented locally by sections of $\rho^{-1} M_{S^{\rm an}}^{\rm gp}$.

We cannot use this construction directly on a logarithmic semiabelian variety $G$ because it has no underlying analytic space.  However, if $T = \uHom(X, \logGm)$ is a logarithmic torus over $S$, then $T^{\rm KN} = \uHom_{S^{\rm an}}(X^{\rm an}, S^1(1))$.  We may define $\rho^{-1} M_{T^{\rm an}}^{\rm gp}$ directly to be $\pi^{-1} \rho^{-1} (M_{S^{\rm an}}^{\rm gp} \times X^{\rm an})$, where $\pi : T^{\rm KN} \to S^{\rm KN}$ is the projection, and we have abusively written $X^{\rm an}$ for the sheaf of sections of $X^{\rm an}$ over $S^{\rm an}$.  The role of the sheaf of functions $\rho^{-1} \mathcal O_{T^{\rm an}}$ is played by $\pi^{-1} \rho^{-1} \mathcal O_{S^{\rm an}}$, and we use the construction of the last paragraph.  The same construction can be applied to a $T$-torsor by gluing, and therefore also to a logarithmic semiabelian variety (with constant degeneration).

When $Z^{\rm KN}$ and $W^{\rm KN}$ are Kato--Nakayama spaces, we define a morphism $Z^{\rm KN} \to W^{\rm KN}$ to be a continuous map $f$ on the underlying topological spaces and a commutative diagram:
\begin{equation*} \xymatrix{
		f^{-1} \rho^{-1} \mathcal O_{W^{\rm an}} \ar[r] \ar[d] & \rho^{-1} \mathcal O_{Z^{\rm an}} \ar[d] \\
		f^{-1} \mathscr L_{W^{\rm KN}} \ar[r] & \mathscr L_{Z^{\rm KN}}
	}
\end{equation*}
The upper horizontal arrow is required to be a ring homomorphism and the lower horizontal arrow is a group homomorphism.  If $f : Z^{\rm KN} \to W^{\rm KN}$ is an $S^{\rm KN}$-morphism of Kato--Nakayama spaces as above induces a morphism of the modules of de Rham differentials:
\begin{equation*}
	f^{-1} \Omega_{W^{\rm KN} / S^{\rm KN}} \to \Omega_{Z^{\rm KN} / S^{\rm KN}}
\end{equation*}

When considering homomorphisms and extensions below, we will need to work in a category $\mathscr C$ of Kato--Nakayama spaces over $S^{\rm KN}$ containing all ringed spaces locally isomorphic to the Kato--Nakayama spaces of logarithmic $1$-motifs.  Example~\ref{ex:smfs-torsor}, below, shows that the particular choice of $\mathscr C$ does not affect the homomorphisms or extensions when the domain is locally isomorphic to a logarithmic $1$-motif.  

\begin{example} \label{ex:smfs-torsor}
	We recall that extensions of a sheaf of abelian groups, $G$, by another sheaf of abelian groups, $H$, on a site can be specified entirely in terms of data on $G$, as follows:  First, an extension of $G$ by $H$ entails an $H$-torsor $Q$ on $G$.  The group structure of the extension is encoded by a map $\alpha : p_1^{-1} Q \times p_2^{-1} Q \to a^{-1} Q$ and a section $\epsilon$ of $e^{-1} Q$, where $p_1, p_2, a : G \times G \to G$ are the first projection, the second projection, and the addition map, respectively, and $e$ is the zero section of $G$.  The map $\alpha$ must be compatible with the action of $H \times H$ on $p_1^{-1} Q \times p_2^{-1} Q$ and of $H$ on $a^{-1} Q$ via the addition map $H \times H \to H$.  Finally, certain identities of morphisms of sheaves on $G$, on $G \times G$, and on $G \times G \times G$ must hold, expressing the unital, commutative, and associative nature of the group operation of $Q$ encoded by $\alpha$ and $\epsilon$.
\end{example}

%Since logarithmic $1$-motifs are locally isomorphic to logarithmic semiabelian varieties, and logarithmic semiabelian varieties are locally products of abelian varieties and logarithmic tori, every object of $\mathscr C$ is locally isomorphic to $S \times \mathbf C^k \times \mathbf R(1)^\ell$ with the following ring of functions:
%\begin{gather*}
%	\mathcal O_{S \times \mathbf C^k \times \mathbf R(1)^\ell} = \mathcal O_{S^{\rm KN}} \mathop\otimes_{\mathbf C} \mathcal O_{\mathbf C^k} \otimes_{\mathbf C} \mathbf C[\log X_1, \ldots, \log X_\ell] \\
%	\mathscr L_{S \times \mathbf C^k \times \mathbf R(1)^\ell} = \bigl( \mathcal O_{S^{\rm KN}} \mathop\otimes_{\mathbf C} \mathcal O_{\mathbf C^k} \bigr) \times \prod \mathbf Z \log X_i \\
%	\bar
%\end{gather*}
%We may, for example, use the category $(\smfs/S)^{\rm KN}$ of ringed spaces over $S^{\KN}$ that are locally the Kato--Nakayama spaces of fine, saturated, logarithmically smooth logarithmic complex analytic spaces that are saturated over $S^{\rm an}$.  When we want to describe the restriction of one of these sheaves to a smaller site, we will affix a subscript.  For example, $\uHom(-,-)$ denotes the sheaf of homomorphisms between two sheaves on $(\smfs/S)^{\rm KN}$, but $\uHom_{S^{\rm KN}}(-,-)$ denotes the restriction of that sheaf to the open subsets of $S^{\rm KN}$.

We write $\undernorm\Omega^p$ for the sheaf on $\mathscr C$ whose value on $Z^{\rm KN}$ is $\Gamma(Z^{\rm KN}, \Omega^p_{Z^{\rm KN}/S^{\rm KN}})$ and $\undernorm{\mathcal O}^{\rm KN}$ for the sheaf taking the value $\Gamma(Z^{\rm KN}, \mathcal O_{Z^{\rm KN}})$.  It will also be convenient to write $\bar{\mathscr L}$ for the sheaf on $\mathscr C$ whose value on $Z^{\rm KN}$ is $\Gamma(Z^{\rm KN}, \bar{\mathscr L}_{Z^{\rm KN}})$.

\begin{lemma} \label{lem:de-rham}
	Let $S$ be a logarithmic scheme and let $\pi : G \to S$ be a logarithmic semiabelian variety over $S$.  Then the relative logarithmic de Rham complex $\Omega_{G^{\rm KN} / S^{\rm KN}}^\bullet$ is quasi\"isomorphic to $\pi^{-1} \mathcal O_{S^{\rm KN}}$.
\end{lemma}
\begin{proof}
	The logarithmic semiabelian variety $G$ is an extension of an abelian variety $A$ by a logarithmic torus $T = \uHom(X, \logGm)$.  Therefore $G^{\rm KN}$ is locally isomorphic to $T^{\rm KN} \mathop\times_{S^{\rm KN}} A^{\rm KN}$.  We have $\mathcal O_{T^{\rm KN} \mathop\times_{S^{\rm KN}} A^{\rm KN}} = \mathcal O_{A^{\rm an}} \mathop\otimes_{\mathcal O_{S^{\rm an}}} \mathcal O_{T^{\rm KN}}$ since the stalks of $\mathcal O_{T^{\rm KN}}$ are polynomial rings over the stalks of $\mathcal O_{S^{\rm an}}$.  Therefore $\Omega_{T^{\rm KN} \mathop\times_{S^{\rm KN}} A^{\rm KN} / S^{\rm KN}}^\bullet$ is isomorphic to the total complex of $\Omega_{T^{\rm KN} / S^{\rm KN}}^\bullet \otimes_{\mathcal O_{S^{\rm an}}} \Omega_{A^{\rm an} / S^{\rm an}}^\bullet  $.  It therefore suffices to observe that $\mathcal O_{S^{\rm an}} \to \Omega_{A^{\rm an} / S^{\rm an}}^\bullet$ and $\mathcal O_{S^{\rm KN}} \to \Omega_{T^{\rm KN} / S^{\rm KN}}^\bullet$ are quasi\"isomorphisms.  For $A^{\rm an}$ one may observe that the de Rham complex is locally pulled back from the holomorphic de Rham complex of $\mathbf C^k$ over a point; for $T^{\rm KN}$, the de Rham complex is locally pulled back from the algebraic de Rham complex of $\mathbf R^\ell$ over a point.
\end{proof}

\begin{lemma} \label{lem:log-torus-omega}
	Let $T = \uHom(X, \logGm)$ be a logarithmic torus over $S$ and let $\pi : T^{\rm KN} \to S^{\rm KN}$ be the projection.  Then we have the following identifications:
	\setcounter{equation}{\value{theorem}}
	\begin{gather}
		\pi_\ast \mathcal O_{T^{\rm KN}} = \mathcal O_{S^{\rm KN}} \label{eqn:74} \\
		\pi_\ast \Omega^1_{T^{\rm KN} / S^{\rm KN}} = \mathcal O_{S^{\rm KN}} \, d \log X \label{eqn:75} \\
		\pi_\ast \Omega^p_{T^{\rm KN} / S^{\rm KN}} = \bigwedge^p \pi_\ast \Omega^1_{T^{\rm KN}/S^{\rm KN}} \label{eqn:76}
	\end{gather}
\end{lemma}
\begin{proof}
	We may identify $\mathcal O_{T^{\rm KN}}$ with $\mathcal O_{S^{\rm KN}}[\log X]$.  This is a local system on the fibers of $T^{\rm KN}$ over $S^{\rm KN}$, with the fundamental group of the fibers, $\uHom(X,\mathbf Z(1))$, acting by $\gamma . \log x = \log x + \gamma(x)$.  A direct verification (for example, by induction on degree) shows that the only monodromy invariants in $\mathcal O_{S^{\rm KN}}[\log X]$ are the polynomials that are constant on the fibers, so we may conclude that $\pi_\ast \mathcal O_{T^{\rm KN}} = \mathcal O_{S^{\rm KN}}$, which is~\eqref{eqn:74}.

	We can identify $\Omega^1_{T^{\rm KN}} = \mathcal O_{S^{\rm KN}}[\log X]\, d \log X$.  Since the $d \log x$ for $x \in X$ are invariant under monodromy, the sheaf $\Omega^1_{T^{\rm KN}/S^{\rm KN}}$ is locally trivial in $S^{\rm KN}$, so this gives~\eqref{eqn:75}.  Finally, we have~\eqref{eqn:77}:
	\begin{equation} \label{eqn:77}
		\Omega^p_{T^{\rm KN} / S^{\rm KN}} = \mathcal O_{T^{\rm KN}} \otimes_{\pi^{-1} \mathcal O_{S^{\rm KN}}} \bigwedge^p \pi^{-1} \mathcal O_{S^{\rm KN}} \, d\log X
	\end{equation}
	We may therefore apply the projection formula to get~\eqref{eqn:76}.
\end{proof}

\setcounter{theorem}{\value{equation}}
\begin{lemma} \label{lem:hom-omega-p}
	Suppose that $M$ is a logarithmic $1$-motif over a logarithmic scheme $S$.  Then $\uHom_{S^{\rm KN}}(M^{\rm KN}, \undernorm\Omega^p) = 0$ for all $p \geq 2$.
\end{lemma}
\begin{proof}
	Fix $p \geq 2$ and present $M$ as $[G/Y]$ with $G$ an extension of an abelian variety $A$ by $T = \uHom(X, \logGm)$.  Since $G^{\rm KN}$ covers $M^{\rm KN}$, it is sufficient to show that $\uHom_{S^{\rm KN}}(G^{\rm KN}, \undernorm\Omega^p) = 0$, for which it suffices to show that $\uHom_{S^{\rm KN}}(A^{\rm KN}, \undernorm\Omega^p) = \uHom_{S^{\rm KN}}(T^{\rm KN}, \undernorm\Omega^p) = 0$.

	We show first that $\uHom_{S^{\rm KN}}(T^{\rm KN}, \undernorm\Omega^p) = 0$.  Every homomorphism $T^{\rm KN} \to \undernorm\Omega^p$ has an underlying section of $\Omega^p_{T^{\rm KN}/S^{\rm KN}}$.  But one may see by calculation in a basis that an element of $\bigwedge^p \mathcal O_{S^{\rm KN}} \, d \log X$ can only represent a homomorphism if it is zero or if $p = 1$.

	The proof for $A$ is similar.  Writing $\pi : A^{\rm KN} \to S^{\rm KN}$ for the projection, we may identify $\pi_\ast \Omega^1_{A^{\rm KN}}$ with $\mathcal O_{S^{\rm KN}} \mathop\otimes_{\mathcal O_S} V$, where $V$ is the contangent space of $A$ at the origin.  Then $\pi_\ast \Omega^p_{A^{\rm KN} / S^{\rm KN}} = \mathcal O_{S^{\rm KN}} \mathop\otimes_{\mathcal O_S} \bigwedge^p V$, and we see as before that a section of this group can only represent a homomorphism if it is zero or if $p = 1$.
\end{proof}

\begin{lemma} \label{lem:hom-O}
	Let $M$ be a logarithmic $1$-motif over a logarithmic scheme $S$.  Then we have $\uHom_{S^{\rm KN}}(M^{\rm KN}, \undernorm{\mathcal O}^{\rm KN}) = 0$.
\end{lemma}
\begin{proof}
	As in the last lemma, it is sufficient to prove the assertion when $M = T$ is a logarithmic torus over $S$ and when $M = A$ is an abelian variety over $S$.

	First we show that $\uHom_{S^{\rm KN}}(T^{\rm KN}, \undernorm{\mathcal O}^{\rm KN}) = 0$.  Let $\pi : T^{\rm KN} \to S^{\rm KN}$ be the projection.  By Lemma~\ref{lem:log-torus-omega}, we have $\pi_\ast \mathcal O_{T^{\rm KN}} = \mathcal O_{S^{\rm KN}}$.  A section of $\mathcal O_{S^{\rm KN}}$ can represent a homomorphism $T^{\rm KN} \to \undernorm{\mathcal O}^{\rm KN}$ only if it is zero.

	Next we show that $\uHom_{S^{\rm KN}}(A^{\rm KN}, \undernorm{\mathcal O}^{\rm KN}) = 0$.  Let $U$ be the tangent space at the origin of $A^{\rm an}$.  This is a complex vector bundle over $S^{\rm an}$ with strict logarithmic structure, so $U^{\rm KN}$ is a complex vector bundle over $S^{\rm KN}$.  Choosing local trivializations of $U^{\rm KN}$, homomorphisms $U^{\rm KN} \to \undernorm{\mathcal O}^{\rm KN}$ correspond to homogeneous linear functions with coefficients in $\mathcal O_S^{\rm KN}$.  These functions will descend to $A^{\rm KN}$ only if they are constant, hence $\uHom_{S^{\rm KN}}(A^{\rm KN}, \undernorm{\mathcal O}^{\rm KN}) = 0$.
\end{proof}

\begin{lemma} \label{lem:ext-omega}
	Let $M$ be a logarithmic $1$-motif over a logarithmic scheme $S$.  Then we have $\uExt^1_{S^{\rm KN}}(M^{\rm KN}, \undernorm{\Omega}^1) = 0$.
\end{lemma}
\begin{proof}
	Present $M$ as $[G/Y]$ where $G$ is a logarithmic semiabelian variety over $S$ and $Y$ is a lattice over $S$.  We have $\uHom_{S^{\rm KN}}(Y^{\rm KN}, \undernorm{\Omega}^1) = 0$ because $\undernorm\Omega^1$ vanishes on $Y$ (since $Y$ has discrete fibers and is strict over $S$), and $\uExt^1_{S^{\rm KN}}(Y^{\rm KN}, \undernorm\Omega^1) = 0$ since $Y$ is locally free.  The problem is therefore reduced to the case where $M = G$ is logarithmic semiabelian.  This reduces immediately to the cases where $M$ is a logarithmic torus and where $M$ is an abelian variety.  

	When $M = T = \uHom(X, \logGm)$ is a logarithmic torus with projection $\pi : T^{\rm KN} \to S^{\rm KN}$, we have $\mathcal O_{T^{\rm KN}} = \pi^{-1} \mathcal O_{S^{\rm KN}}[\log X]$.  Let $U = \uHom(X, \bar{\mathscr L})$ be the fiberwise universal cover of $T^{\rm KN}$ (recall that $\bar{\mathscr L}$ is the sheaf on $\mathscr C$ whose value on $Z^{\rm KN}$ is $\Gamma(Z, \bar{\mathscr L}_{Z^{\rm KN}})$).  Consider the exact sequence~\eqref{eqn:84}, with $X^\vee = \uHom(X,\mathbf Z(1))$ denoting the dual lattice of $X$:
	\setcounter{equation}{\value{theorem}}
	\begin{equation} \label{eqn:84}
		\uHom_{S^{\rm KN}}((X^\vee)^{\rm KN}, \undernorm\Omega^1) \to \uExt^1_{S^{\rm KN}}(T^{\rm KN}, \undernorm\Omega^1) \to \uExt^1_{S^{\rm KN}}(U, \undernorm\Omega^1)
	\end{equation}
	We have $\uHom_{S^{\rm KN}}((X^\vee)^{\rm KN}, \undernorm\Omega^1) = 0$ since $\Omega^1_{(X^\vee)^{\rm KN} / S^{\rm KN}}$ vanishes.  We argue that $\uExt^1_{S^{\rm KN}}(U, \undernorm\Omega^1) = 0$ as well.

	Since $U$ is homotopy equivalent over $S^{\rm KN}$ to $S^{\rm KN}$ and $\Omega^1_{U/S^{\rm KN}}$ is pulled back from $S^{\rm KN}$, the underlying torsor of any extension of $U$ by $\undernorm\Omega^1$ must be locally trivial in $S^{\rm KN}$.  Replacing $S^{\rm KN}$ by a cover, we now assume it is trivial.  Any extension is therefore determined by a morphism $\varphi : U \times_{S^{\rm KN}} U \to \undernorm\Omega^1$ specifying a group structure on $U \times_{S^{\rm KN}} \undernorm\Omega^1$.  This map must satisfy~\eqref{eqn:79} and~\eqref{eqn:80}, reflecting the associativity and identity properties of the group structure:
	\begin{gather}
		\varphi(u,v) + \varphi(u+v,w) = \varphi(u,v+w) + \varphi(v,w) \label{eqn:79} \\
		\varphi(0,u) = \varphi(u,0) = 0 \label{eqn:80}
	\end{gather}
	By a local choice of basis $X = \sum \mathbf Z x_i$, the $\log x_i$ become coordinates on $U$.  We write $\log x_i$ for the pullbacks of these coordinates to $U \times_{S^{\rm KN}} U$ via the first projection, and $\log y_i$ for their pullbacks via the second projection (and $\log z_i$ for their pullbacks via the third projection on $U \times_{S^{\rm KN}} U \times_{S^{\rm KN}} U$, in a moment).  We can then write $\varphi$ in the form~\eqref{eqn:81}, where $a_i$ and $b_i$ are polynomials in the $x_i$ and $y_i$:
	\begin{equation} \label{eqn:81}
		\varphi = \sum a_i \, d \log x_i + \sum b_i \, d \log y_i
	\end{equation}
	Substituting into~\eqref{eqn:79}, we obtain the equations~\eqref{eqn:82} (of differential forms on $U \mathop\times_{S^{\rm KN}} U \mathop\times_{S^{\rm KN}} U$):
	\begin{align} 
		a_i(u,v) \, d\log x_i + a_i(u+v,w) \, d\log x_i & = a_i(u,v+w)\, d\log x_i \notag \\
		b_i(u,v) \, d\log y_i + a_i(u+v,w) \, d\log y_i & = b_i(u,v+w)\, d\log y_i + a_i(v,w)\,d\log y_i \label{eqn:82} \\
		b_i(u+v,w)\, d\log z_i & = b_i(u,v+w)\, d\log z_i + b_i(v,w)\, d\log z_i \notag 
	\end{align}
	Substituting $u = 0$ into the first equation and applying~\eqref{eqn:80} shows that $a_i = 0$ for all $i$.  Likewise, substituting $w = 0$ into the last equation gives $b_i = 0$ for all $i$.  Thus all extensions of $U$ by $\undernorm\Omega^1$ are locally trivial in $S^{\rm KN}$, so $\uExt^1_{S^{\rm KN}}(T^{\rm KN}, \undernorm\Omega^1) = 0$.

	Now we show that $\uExt^1_{S^{\rm KN}}(A^{\rm KN}, \undernorm\Omega^1) = 0$ when $A$ is an abelian variety over $S$.  Let $\sigma : U \to A^{\rm KN}$ be the fiberwise universal cover.  We argue first that the restriction to $U$ of the underlying torsor of any extension of $A^{\rm KN}$ by $\undernorm\Omega^1$ must be locally trivial in $S^{\rm KN}$.  Indeed, the failure of local triviality is measured by a section of $\mathrm R^1 \pi_\ast \Omega^1_{A^{\rm KN} / S^{\rm KN}}$ over $S^{\rm KN}$.  We may identify $\Omega^1_{A^{\rm KN} / S^{\rm KN}} = \mathcal O_{S^{\rm KN}} \otimes_{\mathcal O_{S^{\rm an}}} \Omega^1_{A^{\rm an} / S^{\rm an}}$.  Since the stalks of $\mathcal O_{S^{\rm KN}}$ are polynomial rings over the stalks of $\mathcal O_{S^{\rm an}}$ \cite[Lemma~(3.3)]{KN}, they are in particular flat over $\mathcal O_{S^{\rm an}}$.  Using this observation and proper base change for locally compact topological spaces \cite[Proposition~II.2.6.7]{KS}, we may make the following identifications, with $\pi$ denoting the projections $A^{\rm KN} \to S^{\rm KN}$ and $A^{\rm an} \to S^{\rm an}$, and $\rho$ denoting $A^{\rm KN} \to A^{\rm an}$ and $S^{\rm KN} \to S^{\rm an}$:
	\begin{equation*}
		\mathrm R^1 \pi_\ast \Omega^1_{A^{\rm KN} / S^{\rm KN}} = \mathcal O_{S^{\rm KN}} \mathcal\otimes_{\mathcal O_{S^{\rm an}}} \mathrm R^1 \pi_\ast \rho^{-1} \Omega^1_{A^{\rm an} / S^{\rm an}} = \mathcal O_{S^{\rm KN}} \mathcal\otimes_{\mathcal O_{S^{\rm an}}} \rho^{-1} \mathrm R^1 \pi_\ast \Omega^1_{A^{\rm an} / S^{\rm an}}
	\end{equation*}
	Since the map $\rho^{-1} \mathrm R^1 \pi_\ast \Omega^1_{A^{\rm an} / S^{\rm an}} \to \mathrm R^1 (\pi \sigma)_\ast \Omega^1_{U/S^{\rm KN}}$ factors through $\rho^{-1} \mathrm R^1 (\pi\sigma)_\ast \Omega^1_{U^{\rm an} / S^{\rm an}} = 0$, it follows that the pullback of any $\Omega^1_{A^{\rm KN} / S^{\rm KN}}$ torsor to $U$ is locally trivial in $S^{\rm KN}$.
	
	We may now proceed by the same argument as with $T^{\rm KN}$.  The extension is determined by a map $\varphi : U \mathop\times_{S^{\rm KN}} U \to \undernorm\Omega^1$ satisfying~\eqref{eqn:79} and~\eqref{eqn:80}, except the $a_i$ and $b_i$ are now holomorphic functions in the $x_i$ and $y_i$.  The same argument shows that $\varphi = 0$, and therefore that $\uExt^1_{S^{\rm KN}}(A^{\rm KN}, \undernorm\Omega^1) = 0$, as required.
\end{proof}

Lemma~\ref{lem:hom-omega-p} supplies the first map in the sequence~\eqref{eqn:99} of Proposition~\ref{prop:hodge}:
\setcounter{theorem}{\value{equation}}
\begin{proposition} \label{prop:hodge}
	Let $M$ be a logarithmic $1$-motif over a logarithmic scheme $S$.  The sequence~\eqref{eqn:99} is exact:%
	\setcounter{equation}{\value{theorem}}
	\begin{equation} \label{eqn:99}
		0 \to \uHom_{S^{\rm KN}}(M^{\rm KN}, \undernorm{\Omega}^1) \to \uExt^1_{S^{\rm KN}}(M^{\rm KN}, \mathcal O_{S^{\rm KN}}) \to \uExt^1_{S^{\rm KN}}(M^{\rm KN}, \undernorm{\mathcal O}^{\rm KN}) \to 0
	\end{equation}
\end{proposition}
\begin{proof}
	We apply $\uHom_{S^{\rm KN}}(M^{\rm KN}, -)$ to the following sequence of sheaves on $\mathscr C$:
	\begin{equation} \label{eqn:85}
		0 \to \mathcal O_{S^{\rm KN}} \to \undernorm{\mathcal O}^{\rm KN} \to \undernorm\Omega^1 \to \undernorm\Omega^2 \to \cdots
	\end{equation}
	The sequence~\eqref{eqn:85} is not necessarily exact on all of $\mathscr C$, but Lemma~\ref{lem:de-rham} implies that it is exact when restricted to the subcategory of Kato--Nakayama spaces that are locally isomorphic to fiber products of copies of $M$ over $S$.  As explained in Example~\ref{ex:smfs-torsor}, the definitions of $\uHom_{S^{\rm KN}}(M^{\rm KN}, -)$ and $\uExt^1_{S^{\rm KN}}(M^{\rm KN}, -)$ only depend on the restriction of~\eqref{eqn:85} to this subcategory, so we obtain a spectral sequence converging to $0$, a piece of whose $E_1$ page is shown below:
	\begin{equation*} \vcenter{\xymatrix@C=8pt@R=10pt{
			0 \ar[r] & \uExt^1_{S^{\rm KN}}(M^{\rm KN}, \mathcal O_{S^{\rm KN}}) \ar[r] & \uExt^1_{S^{\rm KN}}(M^{\rm KN}, \undernorm{\mathcal O}^{\rm KN}) \ar[r] &  0 \ar[r] & \uExt^1_{S^{\rm KN}}(M^{\rm KN}, \undernorm\Omega^2) \\
			0 \ar[r] & \uHom_{S^{\rm KN}}(M^{\rm KN}, \mathcal O_{S^{\rm KN}}) \ar[r] & 0 \ar[r] & \uHom_{S^{\rm KN}}(M^{\rm KN}, \undernorm\Omega^1) \ar[r] & 0
	}} \end{equation*}
	We have used Lemmas~\ref{lem:hom-omega-p},~\ref{lem:hom-O}, and~\ref{lem:ext-omega} for the vanishings of the groups $\uHom_{S^{\rm KN}}(M^{\rm KN}, \undernorm\Omega^2)$, $\uHom_{S^{\rm KN}}(M^{\rm KN}, \undernorm{\mathcal O}^{\rm KN})$, and $\uExt^1_{S^{\rm KN}}(M^{\rm KN}, \undernorm\Omega^1)$. The exactness of the sequence~\eqref{eqn:99} now follows from the convergence of this spectral sequence to $0$ in the displayed region.
\end{proof}

Proposition~\ref{prop:hodge} gives a $2$-step filtration on $\uExt^1_{S^{\KN}}(M^{\KN}, \mathcal O_{S^{\rm KN}})$:
\begin{align*}
	F^0 & = \uExt^1_{S^{\KN}}(M^{\KN}, \mathcal O_{S^{\rm KN}}) \\
	F^1 & = \uHom_{S^{\KN}}(M^{\KN}, \undernorm{\Omega}^1)
\end{align*}
It has the following graded pieces:
\begin{align*}
	\gr_0 F & = \uExt^1_{S^{\KN}}(M^{\KN}, \undernorm{\mathcal O}^{\rm KN}) \\
	\gr_1 F & = \uHom_{S^{\KN}}(M^{\KN}, \undernorm{\Omega}^1) 
\end{align*}

We may simplify the description of $\gr_0 F$ using the following lemma:

\setcounter{theorem}{\value{equation}}
\begin{lemma} \label{lem:T}
	We have $\uHom_{S^{\rm KN}}(T^{\KN}, \undernorm{\mathcal O}^{\rm KN}) = \uExt^1_{S^{\rm KN}}(T^{\KN}, \undernorm{\mathcal O}^{\rm KN}) = 0$.  
\end{lemma}
\begin{proof}
	The vanishing of $\uHom_{S^{\rm KN}}(T^{\rm KN}, \undernorm{\mathcal O}^{\rm KN})$ is a special case of Lemma~\ref{lem:hom-O}.  Let $U = \uHom(X^{\rm KN}, \bar{\mathscr L})$ be the fiberwise universal cover of $T^{\rm KN}$.  Since we have $T^{\rm KN} = U / (X^\vee)^{\rm KN}$, where $X^\vee = \uHom(X, \mathbf Z(1))$, we have an exact sequence:
	\setcounter{equation}{\value{theorem}}
	\begin{equation} \label{eqn:89}
		\uHom_{S^{\rm KN}}(U, \undernorm{\mathcal O}^{\rm KN}) \to X(-1) \otimes \mathcal O_{S^{\rm KN}} \to \uExt^1_{S^{\rm KN}}(T^{\rm KN}, \undernorm{\mathcal O}^{\rm KN}) \to \uExt^1_{S^{\rm KN}}(U, \undernorm{\mathcal O}^{\rm KN})
	\end{equation}
	We can interpret $\uExt^1_{S^{\rm KN}}(U, \undernorm{\mathcal O}^{\rm KN})$ as the sheaf over $S^{\rm KN}$ of real algebraic extensions of the underlying real vector bundle of $U$ by $\mathbf R$.  All such extensions can be trivialized locally in $S^{\rm KN}$, so $\uExt^1_{S^{\rm KN}}(U, \undernorm{\mathcal O}^{\rm KN}) = 0$.%
	\footnote{We can also argue explicitly by cocycles.  First, to minimize notation, we reduce to the case where $X = \mathbf Z$ by working locally and using the additivity of $\uExt^1$, so $U$ is a trivial real vector bundle of rank~$1$ over $S^{\rm KN}$.  The underlying torsor of an extension in $\uExt^1_{S^{\rm KN}}(U, \undernorm{\mathcal O}^{\rm KN})$ is locally trivial in $S^{\rm KN}$, since $U$ has contractible fibers over $S$ and $\mathcal O_U$ is a constant sheaf on the fibers.  Such an extension is therefore specified by a polynomial function $\varphi$ on $U \times U$ with coefficients in $\mathcal O_{S^{\rm KN}}$ satisfying~\eqref{eqn:79} and~\eqref{eqn:80}.  That is $\varphi \in \mathcal O_{S^{\rm KN}}[u,v]$ where $u$ and $v$ represent the coordinate projections on $U \times U$.  Two such $\varphi$ are considered equivalent if they differ by $\psi(u+v) - \psi(u) - \psi(v)$ for a polynomial function $\psi$ on $U$.  Writing $\varphi(u,v) = \sum a_{i,j} u^i v^j$, expanding~\eqref{eqn:79}, and comparing coefficients gives~\eqref{eqn:88} for all $i,j,k > 0$:
	\begin{equation} \label{eqn:88}
		\binom{i+j}{i} a_{i+j,k} = \binom{j+k}{k} a_{i,j+k}
	\end{equation}
	That is, for each $n > 0$, there is a $b_n$ such that $a_{i,j} = \frac{1}{i!\,j!} b_n$ when $i + j = n$.  Set $\psi(u) = \sum \frac{1}{n!} b_n u^n$.  Then $\varphi$ is the coboundary of $\psi$, so $\varphi$ represents the trivial extension.}

	The first map in~\eqref{eqn:89} is inverse to the natural map $X(-1) \to \uHom(U, \undernorm{\mathcal O}^{\rm KN})$ coming from the identification $U = \uHom_{S^{\rm KN}}(X, \bar{\mathscr L})$.  In particular, it is surjective, so $\uExt^1_{S^{\rm KN}}(T^{\rm KN}, \undernorm{\mathcal O}^{\rm KN})$ vanishes.
\end{proof}

Lemma~\ref{lem:T} allows us to make the following observation:
\begin{equation*}
	\gr_0 F = \uExt^1_{S^{\KN}}([M^{\KN}/T^{\KN}], \undernorm{\mathcal O}^{\rm KN}) = \uExt^1_{S^{\KN}}([A^{\KN} / Y^{\KN}], \undernorm{\mathcal O}^{\rm KN})
\end{equation*}

In the following lemma, we write $\undernorm{\mathcal O}^{\rm an}$ for the sheaf on complex analytic spaces over $S$ whose value on $Z$ is $\Gamma(Z, \mathcal O_Z)$.

\setcounter{theorem}{\value{equation}}
\begin{lemma}
	Let $\rho : S^{\rm KN} \to S^{\rm an}$ be the projection from the Kato--Nakayama space to the complex analytification.  The natural map~\eqref{eqn:90} is an isomorphism:
	\begin{equation} \label{eqn:90}
		\mathcal O_{S^{\rm KN}} \mathop\otimes_{\rho^{-1} \mathcal O_{S^{\rm an}}} \rho^{-1} \uExt^1_{S^{\rm an}}([A^{\rm an} / Y^{\rm an}], \undernorm{\mathcal O}^{\rm an}) \to \uExt^1_{S^{\rm KN}}([A^{\rm KN} / Y^{\rm KN}], \undernorm{\mathcal O}^{\rm KN})
	\end{equation}
\end{lemma}
\begin{proof}
	We note that any extension in $\uExt^1_{S^{\rm KN}}([A^{\rm KN}  / Y^{\rm KN}], \undernorm{\mathcal O}^{\rm KN})$ is uniquely determined by the family of torsors obtained by restricting it to $[A^{\rm KN} / Y^{\rm KN}]$ and its fiber products over $S^{\rm KN}$.  
	Since the logarithmic analytic stack $[A^{\rm an} / Y^{\rm an}]$ and all of its fiber products are strict over $S^{\rm an}$, the sheaf $\undernorm{\mathcal O}^{\rm KN}$ agrees with $\mathcal O_{S^{\rm KN}} \otimes_{\mathcal O_S} \undernorm{\mathcal O}^{\rm an}$ on these spaces.  
	Since the stalks of $\mathcal O_{S^{\rm KN}}$ are free over the stalks of $\mathcal O_{S^{\rm an}}$, we therefore have~\eqref{eqn:91}:
	\begin{equation} \label{eqn:91}
		\uExt^1_{S^{\rm KN}}([A^{\rm KN} / Y^{\rm KN}], \undernorm{\mathcal O}^{\rm KN}) = \uExt^1_{S^{\rm KN}}([A^{\rm KN} / Y^{\rm KN}], \rho^{-1} \undernorm{\mathcal O}^{\rm an}) \otimes_{\rho^{-1} \mathcal O_{S^{\rm an}}} \mathcal O_{S^{\rm KN}}
	\end{equation}
	It remains to show that the natural map~\eqref{eqn:92} is an isomorphism:
	\begin{equation} \label{eqn:92}
		\rho^{-1} \uExt^1_{S^{\rm an}}([A^{\rm an} / Y^{\rm an}], \undernorm{\mathcal O}^{\rm an}) \to \uExt^1_{S^{\rm KN}}(\rho^{-1} [A^{\rm an} / Y^{\rm an}], \rho^{-1} \undernorm{\mathcal O}^{\rm an})
	\end{equation}
	Recall that an extension of $[ A^{\rm an} / Y^{\rm an} ]$ by $\undernorm{\mathcal O}^{\rm an}$ consists of an extension of $A^{\rm an}$ by $\undernorm{\mathcal O}^{\rm an}$ and a trivialization of the induced extension of $Y^{\rm an}$ by $\undernorm{\mathcal O}^{\rm an}$.  The two sides of the arrow~\eqref{eqn:92} are thus torsors over the two sides of~\eqref{eqn:93}, under the group $\uHom_{S^{\rm KN}}(Y^{\rm KN}, \rho^{-1} \undernorm{\mathcal O}^{\rm an})$:
	\begin{equation} \label{eqn:93}
		\rho^{-1} \uExt^1_{S^{\rm an}}(A^{\rm an}, \undernorm{\mathcal O}^{\rm an}) \to \uExt^1_{S^{\rm KN}}(\rho^{-1} A^{\rm an}, \rho^{-1} \undernorm{\mathcal O}^{\rm an})
	\end{equation}
	The morphism respects the action of $\uHom_{S^{\rm KN}}(Y^{\rm KN}, \rho^{-1} \undernorm{\mathcal O}^{\rm an})$, so it suffices to demonstrate that~\eqref{eqn:93} is an isomorphism.  

	Recall from Example~\ref{ex:smfs-torsor} that an extension of $A^{\rm an}$ by $\undernorm{\mathcal O}^{\rm an}$ is specified by a $\mathcal O_{A^{\rm an}}$-torsor on $A^{\rm an}$, a morphism of $\mathcal O_{A^{\rm an} \mathop\times_{S^{\rm an}} A^{\rm an}}$-torsors on $A^{\rm an} \mathop\times_{S^{\rm an}} A^{\rm an}$, a morphism of $\mathcal O_{S^{\rm an}}$-torsors on $S^{\rm an}$, and various compatibilities among these, expressed as morphisms of $\mathcal O_{(-)^{\rm an}}$-torsors on $A^{\rm an}$, $A^{\rm an} \mathop\times_{S^{\rm an}} A^{\rm an}$, and $A^{\rm an} \mathop\times_{S^{\rm an}} A^{\rm an} \mathop\times_{S^{\rm an}} A^{\rm an}$.  Since $A^{\rm an}$ and all of its fiber products are proper over $S^{\rm an}$, all of these data commute with base change to $S^{\rm KN}$ by proper base change for cohomology on locally compact, Hausdorff topological spaces \cite[Proposition~II.2.6.7]{KS}.  We conclude that~\eqref{eqn:93} is an isomorphism, as required.
\end{proof}

By the lemma, we conclude:
\begin{equation*}
	\gr_0 F = \uExt^1_{S^{\rm an}}([A^{\rm an} / Y^{\rm an}], \undernorm{\mathcal O}^{\rm an}) \otimes_{\mathcal O_{S^{\rm an}}} \mathcal O_{S^{\rm KN}}
\end{equation*}

Next we compute the induced filtration $F^i \uExt^1_{S^{\rm KN}}(\gr_j W^{\rm KN}, \mathcal O_{S^{\rm KN}})$, for each of the pieces $\gr_j W^{\rm KN}$ of the logarithmic monodromy filtration, $W^{\rm KN}$.  We have $\gr_{-1} W^{\rm KN} = T^{\rm KN} = \uHom(X, S^1(1))$.  By Lemma~\ref{lem:T}, $\gr_0 F\, \uExt^1_{S^{\rm KN}}(\gr_{-1} W^{\rm KN}, \mathcal O_{S^{\rm KN}}) = \uExt^1_{S^{\rm KN}}(T^{\rm KN}, \undernorm{\mathcal O}^{\rm KN}) = 0$.  Therefore we have the following equalities:
\begin{equation*}
	\uExt^1_{S^{\rm KN}}(\gr_{-1} W^{\rm KN}, \mathcal O_{S^{\rm KN}}) = F^1 \uExt^1_{S^{\rm KN}}(\gr_{-1} W^{\rm KN}, \mathcal O_{S^{\rm KN}}) = \uHom_{S^{\rm KN}}(T^{\rm KN}, \undernorm{\Omega}^1) = X(-1) \otimes \mathcal O_{S^{\rm KN}}
\end{equation*}
The final equality comes from $\Gamma(T^{\rm KN}, \Omega^1_{T^{\rm KN}/ S^{\rm KN}}) = \mathcal O_{S^{\rm KN}} \, d \log X$, by Lemma~\ref{lem:log-torus-omega}.  The choice of twist comes from $\uExt^1_{S^{\rm KN}}(T^{\rm KN}, \mathbf Z) = X(-1)$ (Lemma~\ref{lem:ext-Z}).

On the middle piece $\gr_0 W^{\KN} = A^{\KN}$, the filtration is the usual Hodge filtration on the first cohomology of the complex abelian variety $A^{\rm an}$.

On the last piece, $\gr_1 W^{\KN} = \mathrm B Y^{\KN}$, we have $\uHom_{S^{\KN}}(\mathrm B Y^{\KN}, \undernorm \Omega^1) = 0$ and we calculate:
\begin{equation*}
	\uExt^1_{S^{\KN}}(\mathrm B Y^{\KN}, \undernorm{\mathcal O}^{\rm KN}) = \uHom_{S^{\KN}}(Y^{\KN}, \undernorm{\mathcal O}^{\rm KN}) = \uHom_{S^{\KN}}(Y^{\KN}, \mathcal O_{S^{\rm KN}})
\end{equation*}

We summarize these calculations in the following table:
\def\arraystretch{1.5}
\begin{equation} \label{eqn:97}
\begin{array}{r|ccc} 
	\gr_i F\, \uExt^1(\gr_j W^{\rm KN}, \mathcal O_{S^{\rm KN}}) & \gr_0 F & \gr_1 F \\
\hline
	\gr_1 W^{\rm KN}  & \uHom_{S^{\rm KN}}(Y^{\rm KN},\mathcal O_{S^{\rm KN}}) & 0  \\
	\gr_0 W^{\rm KN}  & \mathcal H^1(A^{\rm an}, \mathcal O_A) \otimes_{\mathcal O_{S^{\rm an}}} \mathcal O_{S^{\rm KN}} & \mathcal H^0(A^{\rm an}, \Omega^1_{A^{\rm an}}) \otimes_{\mathcal O_{S^{\rm an}}} \mathcal O_{S^{\rm KN}} \\
	\gr_{-1} W^{\rm KN} & 0 &  X(-1) \otimes \mathcal O_{S^{\rm KN}}
\end{array}
\end{equation}

\setcounter{subsection}{\value{equation}}
\subsection{Limiting mixed Hodge structure} \label{sec:lmhs}

Recall that if $V$ is a finite dimensional vector space and $N$ is a nilpotent endomorphism of $V$ then there is a unique filtration $W$ on $V$ such that $N W_i \subset W_{i-2}$ for all $i$ and $N^i$ induces an isomorphism $\gr_i W \to \gr_{-i} W$ for all $i$ (see \cite[(1.5.5)]{Ill94} or \cite[Proposition~(2.1)]{SZ85}).  When $N^2 = 0$, this filtration has the following simple form:
\begin{equation*}
	W_{-1} = \image N \qquad W_0 = \ker N \qquad W_1 = V
\end{equation*}
When $N$ is the logarithm of the monodromy acting unipotently on the cohomology of a smooth family of complex varieties over the punctured disc, this filtration is called the monodromy filtration.

Returning to the calculation in~\eqref{eqn:97}, we specialize to the case where $M$ is the logarithmic Jacobian of a logarithmic curve $C$ obtained by a $1$-parameter degeneration.  In this case, we have $X = Y = H = H_1(\mathfrak C)$, where $\mathfrak C$ is the dual graph of $C$.  By Proposition~\ref{prop:betti}, the logarithm of the monodromy action is given by the tropical intersection pairing, $H \otimes H \to \overnorm M_S = \mathbf Z$.  This is positive definite, and in particular is nondegenerate.  Therefore, the logarithm~\eqref{eqn:98} of the monodromy homomorphism on the rational cohomology is an isomorphism.
\setcounter{equation}{\value{subsection}}
\begin{equation} \label{eqn:98}
	N : \gr_1 W_{\mathbf Q} = H(-1) \otimes \mathbf Q \to \Hom(H,\mathbf Q) = \gr_{-1} W_{\mathbf Q}
\end{equation}
Thus $W_\bullet$ is the monodromy filtration on $\uExt^1(M^{\rm KN}, \mathbf Z)$.

\section{Picard--Lefschetz theory}
\label{sec:picard-lefschetz}

In this section, we consider a smooth curve $C$ over $T$, where $T$ is a smooth, non-complete algebraic curve.  Let $\overnorm T$ be the regular completion of $T$ and we assume that $\overnorm C$ is a semistable extension of $C$ to $\overnorm T$.  We give $\overnorm T$ the divisorial logarithmic structure.  Let $S$ be a point of $\overnorm T - T$, with the logarithmic structure induced from $\overnorm T$.

In this case, the proof of~\cite[Theorem~4.15.7]{logpic} shows that the logarithmic Jacobian of $\overnorm C_S$ over $S$ is constructed with $X = Y = H$, where $H = H_1(\mathfrak C)$, and $\mathfrak C$ is the dual graph of the central fiber.  The pairing $\partial : H \otimes H \to \overnorm M_S^{\rm gp} = \mathbf Z$ is the tropical intersection pairing (see Section~\ref{sec:monodromy}).  As usual, we also regard $\partial$ as a homomorphism $H \to \Hom(H, \mathbf Z)$ and as a homomorphism $H_n \to \Hom(H_n,\mathbf Z/n\mathbf Z)$.

We wish to describe the action of a loop in $T$ around $S$ on $H^1(C)$, for various cohomology theories, in terms of the monodromy pairing.  First we consider $H^1_{\et}(C, \mu_n)$, where $n$ is invertible in $\mathcal O_{\overnorm T}$.  We may identify $H^1_{\et}(C, \mu_n)$ with $J[n]$, where $J$ is the Jacobian of $C$ over~$T$.

Let $\overnorm J$ denote the logarithmic Jacobian of $\overnorm C$ over $\overnorm T$, and $\overnorm J_S$ its restriction to $S$.  Since $H^1_{\loget}(C, \mu_n) = \overnorm J[n]$ is a locally constant sheaf on the logarithmic \'etale site of $\overnorm T$, it will suffice to describe the monodromy action over $S$.

The exact sequence \ref{eqn:103} from Section~\ref{sec:torsion} gives a surjective map $\overnorm J_S[n] \to H/nH$ that we will denote $\alpha \mapsto \overnorm\alpha$.  The exact sequence \ref{eqn:104} gives an inclusion $\Hom(H,\mu_n) \subset \overnorm J_S[n]$.

\begin{corollary} \label{cor:PL1}
Let $g$ be the image of $\gamma \in \pi_1^{\et}(T)$ under the projection $\pi_1^{\et}(T) \to \mu_n$ giving the monodromy action on $t^{1/n}$, where $t$ is a local parameter for $T$ at $S$.  The monodromy action of $\gamma$ on $H^1_{\et}(C, \mu_n)$ is given by the following formula for $\alpha \in H^1_{\et}(C, \mu_n)$:
\begin{equation*}
\alpha \mapsto \alpha + g \partial(\overnorm\alpha)
\end{equation*}
\end{corollary}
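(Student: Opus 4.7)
The plan is to specialize the proposition of Section~\ref{sec:etale} to the case $X=Y=H$ and translate its abstract identification of the obstruction class with the reduction of $\partial$ modulo $n$ into an explicit formula on points.

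First I would identify $H^1_{\et}(C,\mu_n)$ with $J[n]$, and observe that $J[n]$ extends over $\overnorm T$ to $\overnorm J[n]$ as a locally constant sheaf on the logarithmic \'etale site. The $\pi_1^{\et}(T)$-action on $H^1_{\et}(C,\mu_n)$ therefore factors through the monodromy action of $\pi_1^{\loget}(\overnorm T)$ on the stalk $\overnorm J_S[n]$, and in particular through that of the logarithmic inertia group $I\subset \pi_1^{\loget}(S)$. Since $\overnorm M_S^{\rm gp}=\mathbf Z$ is generated by the class of $t$, the image of $\gamma$ in $I_n=\Hom(\overnorm M_S^{\rm gp},\mu_n)=\mu_n$ is precisely the element $g$ defined in the statement.

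Next I would use that both $E[n]$ and $F[n]$ descend to the strict \'etale site, so that $I$ acts trivially on $T[n]$, $A[n]$, $Y_n$ and on the descended extensions. For $\alpha\in\overnorm J_S[n]$ with image $\overnorm\alpha\in Y_n=H/nH$, the difference $\gamma\cdot\alpha-\alpha$ is therefore a well-defined function of $\overnorm\alpha$ alone, and takes values in the subgroup $T[n]=\Hom(H,\mu_n)\subset\overnorm J_S[n]$. The resulting bilinear form $I_n\times Y_n\to T[n]$ is, by construction, the image of the class of the variegated extension $\overnorm J_S[n]$ under the identification~\eqref{eqn:11}.

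By the proposition of Section~\ref{sec:etale}, that class is the reduction of $\partial$ modulo $n$. Unpacking the identification $I_n=\mu_n$, the action becomes $\alpha\mapsto\alpha+g\partial(\overnorm\alpha)$, where $g\partial(\overnorm\alpha)\in\Hom(H,\mu_n)$ denotes the homomorphism sending $h\in H$ to $g^{\partial(h,\overnorm\alpha)}$. By Corollary~\ref{cor:curves}, $\partial$ is here the tropical intersection pairing, matching the statement. The main point requiring care is simply to match the direct description of $\gamma\cdot\alpha-\alpha$ on points with the homomorphism $I_n\to\Hom(X_n\otimes Y_n,\mu_n)$ assigned to $\overnorm J_S[n]$ by the formalism of variegated extensions; this is a naturality check, but must be done with attention to the order of the arguments of $\partial$ and the sign conventions on the action.
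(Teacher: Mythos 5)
Your proposal is correct and follows the paper's intended route exactly: the paper supplies no separate proof for this corollary, relying instead on precisely the chain you spell out — the identification $H^1_{\et}(C,\mu_n)=J[n]$, local constancy of $\overnorm J[n]$ on the logarithmic \'etale site reducing everything to the inertia action over $S$, triviality of the $I$-action on $G[n]$ and on the descended extensions $E[n]$, $F[n]$ so that $\gamma\cdot\alpha-\alpha$ lands in $\Hom(H,\mu_n)$ and depends only on $\overnorm\alpha$, and the proposition of Section~\ref{sec:etale} identifying the resulting cocycle with $\partial$ mod $n$ (which is the tropical intersection pairing by Corollary~\ref{cor:curves}). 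Your closing caveat about matching argument order and sign conventions is the only point the paper also leaves implicit, so there is nothing to add.
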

\begin{proof}
	It was shown in Section~\ref{sec:etale} that the logarithmic inertia group operates on $\overline J[n]$ via the monodromy pairing.
\end{proof}

Next, we consider the same question when $\overnorm T$ is an analytic disc.  In this case, the universal coefficients theorem implies that pullback along the Abel map induces an isomorphism:
\begin{equation*}
\Ext^1(J, \mathbf Z) \to H^1(C, \mathbf Z)
\end{equation*}
Therefore the monodromy action can be described by the action of $\pi_1(T)$ on $\Ext^1(J, \mathbf Z)$.  Since $H^1(\overnorm C^{\KN}, \mathbf Z)$ and $\Ext^1(\overnorm J^{\KN}, \mathbf Z)$ are locally constant over $\overnorm T^{\KN}$, we may compute the monodromy action over $S^{\KN}$.  

The exact sequences \ref{eqn:105} and \ref{eqn:106} from Section~\ref{sec:betti} induce homomorphisms:
\begin{gather*}
\Ext^1(\overnorm J_S^{\KN}, \mathbf Z) \to H(-1) : \alpha \mapsto \overnorm\alpha \\
\Hom(H,\mathbf Z) \to \Ext^1(\overnorm J_S^{\KN}, \mathbf Z)
\end{gather*}
The monodromy pairing gives a homomorphism $\partial : H \to \Hom(H,\mathbf Z)$.

\begin{corollary} \label{cor:PL2}
Let $g \in \mathbf Z(1)$ give the monodromy action of $\gamma \in \pi_1(T)$ on $\log(t)$, where $t$ is a local parameter for $T$ at $S$.  The monodromy action of $\gamma$ on $H^1(C, \mathbf Z)$ is given by the following formula:
\begin{equation*}
\alpha \mapsto \alpha + g \partial(\overnorm\alpha)
\end{equation*}
\end{corollary}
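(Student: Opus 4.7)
The plan is to mimic the proof of Corollary~\ref{cor:PL1}, replacing the torsion realization of Section~\ref{sec:torsion} with the Betti realization of Section~\ref{sec:betti}.  The Abel map identifies $H^1(C, \mathbf Z) \cong \Ext^1(J, \mathbf Z)$, and since $\uExt^1(\overnorm J^{\KN}, \mathbf Z)$ is locally constant on $\overnorm T^{\KN}$ and the inclusions $S^{\KN} \hookrightarrow \overnorm T^{\KN} \hookleftarrow T$ are homotopy equivalences, the monodromy of $\pi_1(T)$ around $S$ is computed by the action of $\pi_1(S^{\KN})$ on $\Ext^1(\overnorm J_S^{\KN}, \mathbf Z)$.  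This action factors through the inertia group $I = \Hom(\overnorm M_S^{\rm gp}, \mathbf Z(1)) = \mathbf Z(1)$, since $\overnorm M_S^{\rm gp} = \mathbf Z$, and $\gamma$ maps to the element $g$ of the statement.

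Next I would apply Proposition~\ref{prop:betti}.  The group $\Ext^1(\overnorm J_S^{\KN}, \mathbf Z)$ carries the variegated extension structure of Section~\ref{sec:betti}, with graded pieces $W_{-1} = \Hom(H, \mathbf Z)$, $W_0/W_{-1} = \uExt^1(A^{\KN}, \mathbf Z)$, and $W_1/W_0 = H(-1)$, induced from the weight filtration on $M$.  Both of the short exact sequences labelled $\Ext^1(F^{\KN}, \mathbf Z)$ and $\Ext^1(E^{\KN}, \mathbf Z)$ are pulled back from $S^{\rm an}$, so $I$ acts by automorphisms of the variegated extension fixing these subquotients; the action is therefore determined by a homomorphism $I \to \Hom(H(-1), \Hom(H, \mathbf Z)) = \Hom(H \otimes H, \mathbf Z(1))$, equivalently (via $I = \mathbf Z(1)$) a pairing $H \otimes H \to \mathbf Z$.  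By Proposition~\ref{prop:betti} this pairing is the logarithmic monodromy pairing, which by Corollary~\ref{cor:curves} is the tropical intersection pairing $\partial$.

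Finally I would unwind this class into a pointwise formula.  An automorphism of a variegated extension restricting to the identity on $W_0$ and on $W_1/W_{-1}$ is nothing but a map $\phi : W_1/W_0 \to W_{-1}$, which acts on elements by $\alpha \mapsto \alpha + \phi(\overnorm\alpha)$; combined with the previous paragraph this yields $\alpha \mapsto \alpha + g\, \partial(\overnorm\alpha)$.  The main obstacle is verifying that the identification in Proposition~\ref{prop:betti}, phrased there in terms of the torsor structure of $\bExtPan$ from Section~\ref{sec:variegated}, really corresponds to this additive action of $\pi_1(S^{\KN})$-cocycles on individual elements.  Concretely, one must check that for a local lift $\psi$ of $\overnorm\alpha$ to $\uHom(H, \logGm^{\KN})$, the difference $(\gamma \cdot \psi) - \psi$ is exactly $\gamma \circ \partial(\overnorm\alpha)$ as an element of $\uHom(H, \mathbf Z(1)) \subset \Ext^1(\overnorm J_S^{\KN}, \mathbf Z)$; this is the same connecting-homomorphism chase as at the end of the proof of Proposition~\ref{prop:betti}.
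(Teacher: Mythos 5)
Your proposal is correct and follows essentially the same route the paper intends: the corollary is stated without a separate proof precisely because it is the combination of the preceding reduction to the monodromy action over $S^{\KN}$, the identification of the inertia action with the logarithmic monodromy pairing in Proposition~\ref{prop:betti}, and the identification of that pairing with the tropical intersection pairing for Jacobians. Your final unwinding of the unipotent action as $\alpha \mapsto \alpha + g\,\partial(\overnorm\alpha)$, together with the cocycle check you flag, is exactly the computation carried out at the end of the proof of Proposition~\ref{prop:betti}, so nothing further is needed.
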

\begin{proof}
	It was shown in Section~\ref{sec:betti} that $\pi_1(S^{\rm KN})$ acts on $\Ext^1(\overline J_S^{\rm KN}, \mathbf Z)$ via the monodromy pairing.
\end{proof}

\begin{remark}
The formulas in Corollaries~\ref{cor:PL1} and~\ref{cor:PL2} are given in terms of the tropical edge length pairing, not the intersection pairing on the cohomology of the curve.  These are related, but we require the principal polarization of the logarithmic Jacobian to describe the relationship.  This will be explained in another paper.
\end{remark}

\section*{Acknowledgements}

I am very grateful to Luc Illusie, who asked the question that inspired this work, and to my collaborators Samouil Molcho and Martin Ulirsch.  They elected not to be coauthors of this paper, but conversations with them were nevertheless essential to my understanding of these topics (my misunderstandings remain my own).  David Holmes provided valuable feedback on an earlier draft of this paper.  I also wish to express my deepest gratitude to the anonymous referee, whose patient and careful reading helped me to correct many mistakes, both large and small.

I am happy to acknowledge the hospitality of Brown University, where part of this paper was written, and the support of the Simons Foundation, awards 636210 and 822534.

\appendix
\section{The logarithmic and tropical multiplicative groups}
\label{sec:log-trop-Gm}

We work here in the category of fine and saturated logarithmic schemes.  The logarithmic multiplicative group is the functor $\logGm(S) = \Gamma(S, M_S^{\rm gp})$ and the tropical multiplicative group is $\tropGm(S) = \Gamma(S, \overnorm M_S^{\rm gp})$, where $\overnorm M_S^{\rm gp} = M_S^{\rm gp} / \mathcal O_S^\ast$ is the associated group of the characteristic monoid.  By definition $\tropGm = \logGm/\Gm$.  A \emph{logarithmic} (resp.\ \emph{tropical}) \emph{torus} over a logarithmic scheme $S$ is a sheaf on the big strict \'etale site of logarithmic schemes over $S$ that is strict-\'etale locally isomorphic to a finite product of copies of $\logGm$ (resp.\ $\tropGm$).  A \emph{lattice} over a logarithmic scheme $S$ is a sheaf on the big strict \'etale site of logarithmic schemes over $S$ that is locally isomorphic to a constant sheaf of finitely generated free abelian groups.  It follows from Propositions~\ref{prop:log-hom-ext} (resp.~\ref{prop:tropical-hom-ext}), below, that every logarithmic (resp.\ tropical) torus over a logarithmic scheme $S$ is isomorphic to $\uHom(X, \logGm)$ (resp.\ $\uHom(X, \tropGm)$) for some lattice $X$ over $S$.

We collect some basic facts about homomorphisms and extensions involving $\logGm$ and $\tropGm$, analogous to familiar properties of the conventional multiplicative group, $\Gm$.

\begin{proposition} \label{prop:tropGm-map}
	All morphisms $(\tropGm)_S \to (\tropGm)_S$ over a logarithmic scheme $S$ have the form $\alpha \mapsto \lambda + n \alpha$ for a unique $\lambda \in \Gamma(S, \overnorm M_S^{\rm gp})$ and a unique locally constant integer-valued function $n$ on $S$.  Such a morphism is a homomorphism if and only if $\lambda = 0$.
\end{proposition}
\begin{proof}
	Let $\varphi : (\tropGm)_S \to (\tropGm)_S$ be an $S$-morphism.  Give $\mathbf A^1$ the toric logarithmic structure.  Composing $\varphi$ with the projection $\pi : \mathbf A^1_S \to (\tropGm)_S$ gives a section of $\overnorm M_{\mathbf A^1_S}^{\rm gp}$.  We may identify $\Gamma(\mathbf A^1, \overnorm M_{\mathbf A^1_S}^{\rm gp}) = \Gamma(S, \overnorm M_S^{\rm gp}) \times \Hom(S, \mathbf Z)$.  Let $\lambda$ and $n$ be the two components of $\pi^\ast(\varphi)$ under this identification.

	Now, suppose that $T$ is any logarithmic scheme over $S$ and $\alpha \in \Gamma(T, \overnorm M_T)$ is a section of the characteristic monoid of $T$.  Then there is, locally in $T$, a morphism $T \to \mathbf A^1$ such that $\alpha$ is pulled back from the generator $1$ of $\Gamma(\mathbf A^1, \overnorm M_{\mathbf A^1}) \simeq \mathbf N$.  Therefore $\varphi(\alpha)$ is the pullback of $(\lambda, n)$, namely $\lambda + n \alpha$.

	The same reasoning works if $-\alpha \in \Gamma(T, \overnorm M_T)$.  In general, neither $\alpha$ nor $-\alpha$ lies in $\Gamma(T, \overnorm M_T)$, but we can still find a logarithmic modification $p : T' \to T$ such that, locally in $T'$, either $p^\ast \alpha$ or $- p^\ast \alpha$ does lie in $\overnorm M_{T'}$.  Therefore we have $\varphi_{T'}(p^\ast \alpha) = \lambda + n p^\ast(\alpha)$.  Since $\Gamma(T, \overnorm M_T^{\rm gp}) \to \Gamma(T', \overnorm M_{T'}^{\rm gp})$ is injective, we may conclude.

	Finally, $n$ and $\lambda$ are unique because they were obtained by evaluating the map $\varphi$ on~$\mathbf A^1_S$.
\end{proof}

\begin{proposition} \label{prop:tropical-hom-ext}
	Suppose that $T = \uHom(X,\tropGm)$ and $T' = \uHom(X', \tropGm)$ are tropical tori over a logarithmic scheme $S$.  Then $\uHom(T,T') = \uHom(X',X)$ and $\uExt^1(T,T') = 0$.
\end{proposition}
\begin{proof}
	These assertions are local in the \'etale topology on $S$, so we may assume that $X$ and $X'$ are free.  We may therefore reduce to the case where $X = X' = \mathbf Z$, so $T = T' = \tropGm$.  The first claim is then the conclusion of Proposition~\ref{prop:tropGm-map}.

	Now we consider an extension $W$ of $\tropGm$ by itself.  We argue first that the underlying torsor of such an extension can be trivialized locally in $S$.

	Give $\mathbf P^1$ its toric logarithmic structure and let $\pi : \mathbf P^1_S \to S$ be the base change to $S$.  We have $\Gamma(\mathbf P^1, \overnorm M_{\mathbf P^1}) = \mathbf N^2$, with the two generators corresponding to local parameters at $0$ and $\infty$.  Let $\alpha : \mathbf P^1_S \to \tropGm$ be the composition of the projection $\mathbf P^1_S \to \mathbf P^1$ with the morphism $\mathbf P^1 \to \tropGm$ corresponding to the section $(1,-1) \in \mathbf Z^2 \simeq \Gamma(\mathbf P^1, \overnorm M_{\mathbf P^1}^{\rm gp})$.  

	The $\tropGm$-torsor $\alpha^\ast W$ over $\mathbf P^1_S$ restricts to a $\overnorm M_{\mathbf P^1_S}^{\rm gp}$-torsor on the small \'etale site of $\mathbf P^1_S$.  We claim that $\mathrm R^1 \pi_\ast \overnorm M_{\mathbf P^1_S}^{\rm gp} = 0$.  By proper base change for \'etale cohomology, it is sufficient to prove this on the geometric fibers.  But on each fiber, $\overnorm M_{\mathbf P^1_S}^{\rm gp}$ is an extension of a sheaf that is concentrated in relative dimension~$0$, so has vanishing $H^1$, by a constant sheaf, which has vanishing $H^1$ because each geometric fiber is $\mathbf P^1$.

	We may therefore select a trivialization of $\alpha^\ast W$ over $\mathbf P^1_S$.  The restriction of $\alpha^\ast W$ to the dense torus of $\mathbf P^1_S$ is canonically trivialized (since $\alpha : \mathbf P^1_S \to \tropGm$ restricts to $0$ there).  Since every section of $\overnorm M_{\mathbf P^1_S}^{\rm gp}$ over the dense torus lifts to a global section, we may choose the trivialization of $\alpha^\ast W$ so that it restricts to the canonical trivialization on the dense torus.

	Any choice of trivialization is equivariant with respect to the action of $\Gm$ on $\mathbf P^1_S$, so this trivialization descends to $[ \mathbf P^1 / \Gm ]$.  In what remains of the proof, we will only make use of the restriction of this trivialization to $[ \mathbf A^1 / \Gm ] \subset [ \mathbf P^1 / \Gm ]$.  We abbreviate $[ \mathbf A^1 / \Gm ]$ to $\mathscr A$.  We will now propagate this trivialization to all logarithmic schemes over $(\tropGm)_S$.%
	\footnote{The trivialization we eventually produce will give a new trivialization of $\alpha^\ast W$ over $\mathbf P^1_S$, but this is not necessarily the same one that was chosen above.}

	Suppose that $Z$ is a logarithmic scheme over $S$ and $\beta \in \Gamma(Z, \overnorm M_Z^{\rm gp})$ describes a morphism $\beta : Z \to \tropGm$.  Let $Z_0 \subset Z$ be the open subset where $\beta = 0$ and let $Z_1$ be its closed complement (with the reduced scheme structure, say).  Let $U_+$ be the universal logarithmic scheme over $Z$ where $\beta \in \overnorm M_{U_+}$ (note that $U_+$ is not a subscheme of $Z$ but rather an open subscheme of the logarithmic blowup of $Z$ at the fractional ideal generated by $0$ and $\beta$) and let $Z_+ \subset U_+$ be the closed subscheme where the map $\mathcal O_{U_+}(-\beta) \to \mathcal O_{U_+}$ vanishes (here $\mathcal O_{U_+}(-\beta)$ is the invertible sheaf associated with the fiber of $M_{U_+}$ over $\beta$ in $\overnorm M_{U_+}$).    Similarly, let $U_-$ be the universal logarithmic scheme over $Z$ where $-\beta \in \overnorm M_{U_-}$ and let $Z_- \subset U_-$ be the closed scheme where $\mathcal O_{U_-}(\beta) \to \mathcal O_{U_-}$ vanishes.  

	We argue that $Z_+ \to Z_1$ and $Z_- \to Z_1$ are both, topologically, the inclusions of closed subsets and that $Z_+ \cup Z_- = Z_1$.  Locally in $Z$, we can find a map $Z \to \mathbf A^2$ (where $\mathbf A^2$ has the toric logarithmic structure) such that $\beta$ is the pullback of the global section $(1,-1)$ of $\overnorm M_{\mathbf A^2}^{\rm gp}$.  Then $Z_+$ and $Z_-$ are the fine and saturated pullbacks of the strict transforms of the axes in the blowup of $\mathbf A^2$ at the origin.  Since the fine pullback preserves closed embeddings, and saturation only adds nilpotent elements to the structure sheaf in this case, the maps $Z_+ \to Z$ and $Z_- \to Z$ are topologically the inclusions of closed subsets.  To see that $Z_+$ and $Z_-$ cover $Z_1$, suppose that $z$ is a valuative geometric point of $Z_1$.  Then either $\beta \big|_z > 0$ or $\beta\big|_z = 0$ or $\beta \big|_z < 0$ in the partial order on $\overnorm M_z^{\rm gp}$ induced by $\overnorm M_z$.  If $\beta \big|_z > 0$ then $z$ lies in the image of $Z_+$ and if $\beta \big|_z < 0$ then $z$ lies in the image of $Z_-$.  If $\beta \big|_z = 0$ then the image of $z$ under the map $Z \to \mathbf A^2$ described above must be the origin (since $z$ is not in $Z_0$, by assumption).  Thus the fiber over $z$ of the blowup of the origin in $\mathbf A^2$ meets both $Z_+$ and $Z_-$.  We conclude that the images of $Z_+$ and $Z_-$ are closed subsets that cover $Z_1$.

	We will describe a trivialization $W_\beta = \beta^\ast W$ over $Z$ by giving compatible trivializations of $W_\beta$ over $Z_0$, $Z_+$, and $Z_-$.%
	\footnote{To give a trivialization of $W_\beta$ over $Z$ is equivalent to giving a section of the induced sheaf on the small \'etale site of $Z$.  Let $i : Z_1 \to Z$ and $j : Z_0 \to Z$ denote the inclusions of the complementary closed and open sets $Z_1$ and $Z_0$.  Recall \cite[Th\'eor\`eme~9.5.4]{sga4-IV} that to give a section of $W_\beta$ over $Z$, it is equivalent to give a section $a$ of $i^\ast W_\beta$ on $Z_1$ and $b$ of $j^\ast W_\beta$ such that the image of $a$ under the natural homomoprhism $i^\ast W_\beta \to i^\ast j_\ast j^\ast W_\beta$ is $b$.

	For the same reason, to give a section of $W_\beta$ over $Z_1$ is the same as to give sections over the closed subsets $Z_+$ and $Z_-$ that agree on $Z_+ \cap Z_-$.}
	The section $\beta$ induces $Z_+ \to \mathscr A$ and $-\beta$ induces $Z_- \to \mathscr A$.  By pulling back our chosen trivialization of $\alpha^\ast W$, we obtain trivializations of $W_\beta \big|_{Z_+}$ and $W_{-\beta} \big|_{Z_-}$.  We observe that the composition~\eqref{eqn:45} is canonically zero, so these trivializations are opposite one another on $Z_+ \cap Z_-$:
	\setcounter{equation}{\value{theorem}}
	\begin{equation} \label{eqn:45}
		Z_+ \cap Z_- \subset Z_+ \times Z_- \xrightarrow{\beta \times -\beta} \mathscr A \times \mathscr A \xrightarrow{+} \mathscr A \to \tropGm
	\end{equation}
	That is, the trivialization of $W_{-\beta} \big|_{Z_-}$ is dual to a trivialization of the inverse torsor $W_\beta \big|_{Z_-}$ that agrees with the trivialization of $W_\beta \big|_{Z_+}$ on $Z_+ \cap Z_-$.  We therefore obtain compatible trivializations over $Z_+$ and $Z_-$ that glue to a trivialization over $Z_1$.  

	The map $Z_0 \to \tropGm$ is zero, so $W_\beta$ is canonically trivialized over $Z_0$.  Write $b$ for the canonical section of $W_\beta$ over $Z_1$ corresponding to this trivialization.  Let $a$ be the section of $W_\beta$ over $Z_1$ constructed in the last paragraph.  Let $i : Z_1 \to Z$ and $j : Z_0 \to Z$ be the inclusions.  We must show that the image of $a$ under $i^\ast W_\beta \to i^\ast j_\ast j^\ast W_\beta$ is $b$.  Since $Z_+$ and $Z_-$ cover $Z_1$, it is sufficient to vertify this after restriction to each of $Z_+$ and $Z_-$.  But the map $i^\ast W_\beta \big|_{Z_+} \to i^\ast (j_\ast j^\ast W_\beta) \big|_{Z_+}$ is pulled back from $\mathscr A$, where the desired identity holds by our original choice of the trivialization of $\alpha^\ast W$.

	The construction above involved no choices beyond the original choice of trivialization of $\alpha^\ast W$.  It is therefore compatible with base change and gives a trivialization of the underlying torsor of $W$ over $(\tropGm)_S$.  The structure of an extension is therefore encoded in a morphism $\varphi : (\tropGm \times \tropGm)_S \to (\tropGm)_S$ satisfying~\eqref{eqn:46} and~\eqref{eqn:47}:
	\begin{gather}
		\varphi(x+y,z) + \varphi(x,y) = \varphi(x,y+z) + \varphi(y+z) \label{eqn:46} \\
		\varphi(0,x) = \varphi(x,0) = 0 \label{eqn:47}
	\end{gather}
	But we know that $\varphi(x,y) = \lambda + nx + my$ for some $n,m : S \to \mathbf Z$ and $\lambda \in \Gamma(S, \overnorm M_S^{\rm gp})$, by Proposition~\ref{prop:tropGm-map}.  Substituting $x=0$ into~\eqref{eqn:47} gives $\lambda = 0$ and $m = 0$, and by symmetry $n = 0$ as well.
\end{proof}

\setcounter{theorem}{\value{equation}}
\begin{proposition} \label{prop:discrete-torsor}
	Let $T$ be a tropical torus over $S$ and let $Y$ be a lattice over $S$.  Then all morphisms $T \to Y$ factor through $S$ and all $Y$-torsors over $T$ descend uniquely to $S$.
	%Let $X$ and $Y$ be lattices over $S$.  Then all maps $\uHom(X, \tropGm) \to Y$ are locally constant in $S$ and all $Y$-torsors over $\uHom(X,\tropGm)$ descend uniquely to $S$.
\end{proposition}
\begin{proof}
	The assertions are local in $S$ so we assume without loss of generality that $T = (\tropGm^n)_S$.  We write $\pi : T \to S$ for the projection.  We will make use of a universal submersion $\varphi : (\mathbf P^1)_S^n \to (\tropGm^n)_S$, with $\mathbf P^1$ given its toric logarithmic structure.  If $Z$ is any logarithmic scheme over $(\tropGm^n)_S$, the (fine and saturated) base change of $\varphi$ is a $\Gm^n$-torsor over a logarithmic modification%
	\footnote{The map $Z \to \tropGm^n$ corresponds to a tuple of sections $(\alpha_1, \ldots, \alpha_n)$ of $\overnorm M_Z^{\rm gp}$.  The base change of $[ \mathbf P^1 / \Gm ]_S^n$ coincides with the iterated logarithmic blowup of $Z$ along the fractional ideals $(0, \alpha_1), \ldots, (0, \alpha_n)$.  By \cite[Lemma~3.6.13]{KU} it is a composition of logarithmic modifications, hence is a logarithmic modification.  The base change of $(\mathbf P^1)_S^n$ is a $\Gm^n$-torsor over this.}
	and in particular the reduced subschemes of its geometric fibers are $\Gm^n$-torsors over products of copies of $\mathbf P^1$.  In particular, it is a universal submersion (since it is the composition of a flat surjection and a proper surjection) and its fibers are geometrically connected.

	Suppose $f : T \to Y$.  We show that $f$ descends uniquely to $S$.  The composition $f \varphi : (\mathbf P^1)_S^n \to Y$ is constant on the fibers of $(\mathbf P^1)_S^n$ over $S$ since those fibers are connected and $Y$ is discrete.  Since $(\mathbf P^1)_S^n$ is proper over $S$, this implies that $f \varphi$ descends to a function $g : S \to Y$; the surjectivity of $(\mathbf P^1)_S^n \to S$ also implies that the descent of $f$ will be unique.  Since $\varphi$ is universally surjective, the identity $f \varphi = g \pi \varphi$ implies that $f = g \pi$.

	Now we prove that every $Y$-torsor on $T$ also descends uniquely to $S$.  It is equivalent to demonstrate that every $Y$-torsor on $T$ is \'etale-locally trivial in $S$ (since we have already demonstrated that morphisms $T \to Y$ descend uniquely to $S$).  If $Q$ is a $Y$-torsor on $T$ then $\varphi^\ast Q$ is a $Y$-torsor on $(\mathbf P^1)^n_S$.  By Lemma~\ref{lem:discrete-sect-tors}, $\varphi^\ast Q$ descends uniquely to $S$.  Replacing $S$ by an \'etale cover, we assume that $\varphi^\ast Q$ is trivial and we choose a section $q$.  We argue that $q$ descends to a section of $Q$ over $T$.

	Let $\alpha : Z \to T$ be an arbitrary morphism from a logarithmic scheme $Z$ and abuse $\varphi$ and $\alpha$ to stand also for the projections $W \to Z$ and $W \to (\mathbf P^1)_S^n$, respectively, where $W = Z \mathop\times_{(\tropGm^n)_S} (\mathbf P^1)^n_S$.  Then $\alpha^\ast q$ is a section of $\alpha^\ast \varphi^\ast Q = \varphi^\ast \alpha^\ast Q$.  We need to see that $\alpha^\ast q$ descends uniquely to $Z$.  This is an \'etale-local question in $Z$, so we assume that $\alpha^\ast Q$ is a trivial $Y$-torsor on $Z$.  Then we may view $q$ as a morphism $W \to Y$ by way of this isomorphism and this descends uniquely to $Z \to Y$ since $W \to Z$ is submersive (as in the proof of the first assertion of the proposition).  Since this descent is unique, it is compatible with variation of $Z$ and $\alpha : Z \to T$ and therefore defines a trivialization of $Q$ over $S$, as required.  
\end{proof}

\begin{proposition} \label{prop:discrete-ext}
	Let $T$ be a tropical torus over $S$ and let $Y$ be a lattice over $S$.  All extensions of $T$ by $Y$ are uniquely trivialized.
	%Let $X$ and $Y$ be lattices over $S$.  All extensions of $\uHom(X,\tropGm)$ by $Y$ are uniquely trivialized.
\end{proposition}
\begin{proof}
	The underlying torsor of an extension must be trivial by Proposition~\ref{prop:discrete-torsor} and the morphism $T \mathop\times_S T \to Y$ encoding the group structure must be constant, also by Proposition~\ref{prop:discrete-torsor}.

	The choices of trivialization of an extension form a torsor under the group of homomorphisms $T \to Y$.  But by Proposition~\ref{prop:discrete-torsor}, all maps $T \to Y$ factor through $S$, so all homomorphisms $T \to Y$ must be zero.
\end{proof}

\begin{proposition} \label{prop:log-hom-ext}
	Suppose that $T = \uHom(X,\logGm)$ and $T' = \uHom(X', \logGm)$ are logarithmic tori over a logarithmic scheme $S$.  Then $\uHom(T,T') = \uHom(X',X)$ and $\uExt^1(T,T') = 0$.
\end{proposition}
\begin{proof}
	It is shown in \cite[Proposition~2.5]{KKN2} that $\uHom(T,T') =
	\uHom(T^{\rm alg}, {T'}^{\rm alg})$, where $T^{\rm alg} = \uHom(X,\Gm)$
	and ${T'}^{\rm alg} = \uHom(X',\Gm)$ are the algebraic tori underlying
	$T$ and $T'$, respectively, and in \cite[Corollaire~1.4]{sga3-VIII}
	that $\uHom(T^{\rm alg}, {T'}^{\rm alg}) = \uHom(X',X)$.  This proves
	that $\uHom(T,T') = \uHom(X',X)$.  Alternatively, we can reduce to the
	case where $T$ and $T'$ are split and apply \cite[Corollary~11]{logGm}.

	For the second assertion, we work locally in $S$ and assume that $T$
	and $T'$ are split.  By the additivity of $\uExt^1$, we may therefore
	assume that $T = T' = (\logGm)_S$.  There is a commutative diagram with
	exact rows:
	\begin{equation*} \xymatrix{
			\uExt^1( (\tropGm)_S, (\Gm)_S ) \ar[r] \ar[d] & \uExt^1( (\tropGm)_S, (\logGm)_S ) \ar[r] \ar[d] & \uExt^1( (\tropGm)_S, (\tropGm)_S ) \ar[d] \\
			\uExt^1( (\logGm)_S, (\Gm)_S ) \ar[r] \ar[d] & \uExt^1( (\logGm)_S, (\logGm)_S ) \ar[r] \ar[d] & \uExt^1( (\logGm)_S, (\tropGm)_S ) \ar[d] \\
			\uExt^1( (\Gm)_S, (\Gm)_S ) \ar[r] & \uExt^1( (\Gm)_S, (\logGm)_S ) \ar[r] & \uExt^1( (\Gm)_S, (\tropGm)_S )
	} \end{equation*}
	We showed that $\uExt^1( (\tropGm)_S, (\tropGm)_S ) = 0$ in Proposition~\ref{prop:tropical-hom-ext}.  By \cite[Proposition~7.1.1]{sga3-XVII}, every extension of $(\Gm)_S$ by $(\Gm)_S$ is a torus, so $\uExt^1( (\Gm)_S, (\Gm)_S ) = 0$ as well.  Lemma~\ref{lem:discrete-sect-tors}, applied to the restriction of the underlying torsor to the small \'etale site of $(\Gm)_S$, shows that the underlying torsor of any extension of $\Gm$ by $\tropGm$ is trivial, so any extension of $\Gm$ by $\tropGm$ over $S$ is characterized by a map $\varphi : (\Gm^2)_S \to \tropGm$ that encodes a commutative group structure on $(\Gm \times \tropGm)_S$.  But all maps $(\Gm^2)_S \to \tropGm$ factor through $S$, so $\uExt^1( (\Gm)_S, (\tropGm)_S )$ vanishes also.  We obtain a surjective homomorphism:
	\begin{equation} \label{eqn:107}
		\uExt^1( (\tropGm)_S, (\Gm)_S ) \to \uExt^1( (\logGm)_S, (\logGm)_S )
	\end{equation}
	%The kernel of this homomorphism is the subgroup spanned by the images of $\uHom(\Gm, \Gm)$ (the vertical coboundary map) and $\uHom(\tropGm, \tropGm)$ (the horizontal coboundary map).  
	The images of $\uHom((\Gm)_S, (\Gm)_S) \simeq \mathbf Z$ and $\uHom((\tropGm)_S, (\tropGm)_S) \simeq \mathbf Z$ in $\uExt^1((\tropGm)_S, (\Gm)_S)$ coincide with the subgroup spanned by the extension $\logGm$ of $\tropGm$ by $\Gm$.  We will argue that this is all of $\uExt^1((\tropGm)_S, (\Gm)_S)$, which will imply that the surjection~\eqref{eqn:107} is zero, and therefore that $\uExt^1( (\logGm)_S, (\logGm)_S) = 0$. We will write $\mathcal O(n)$ for the image of $n \in \mathbf Z$ under the map $\mathbf Z = \uHom((\Gm)_S, (\Gm)_S) \to \uExt^1((\tropGm)_S, (\Gm)_S)$.  

	Suppose that $W$ is an extension of $\tropGm$ by $\Gm$.  We will begin
	by finding an integer $n$ such that $W(n) = W \otimes \mathcal O(n)$
	restricts to a trivial torsor on $[ \mathbf P^1 / \Gm ]_S$.  Then we
	will extend the trivialization of $W(n)$ to all logarithmic schemes
	over $S$.  Finally, we will show that there is a unique group structure
	on $W(n) \simeq \tropGm \times \Gm$ making it into an extension of
	$\tropGm$ by~$\Gm$.

	\textsc{Step 1.} The underlying torsor over $[\mathbf P^1 / \Gm ]$.  
	Give $\mathbf P^1$ its toric logarithmic structure and let 
	$\mathscr P = [ \mathbf P^1 / \Gm ]$.  
	Let $\alpha : \mathscr P_S \to \tropGm$ be the homomorphism
	corresponding to the section $(1,-1)$ of 
	$\Gamma(\mathscr P, \overnorm M^{\rm gp}_{\mathscr P}) \simeq \mathbf Z^2$.  
	There are two copies of $\mathrm B\Gm$ in $\mathscr P$, embedded as 
	$[\{ 0 \}/\Gm]$ and $[\{ \infty \}/\Gm]$.  We identify 
	$\Pic([\{0\}/\Gm]) = \Pic([\{\infty\}/\Gm]) = \mathbf Z$ so that the
	tangent spaces to $\mathbf P^1$ at $0$ and $\infty$ correspond to $+1$ 
	under the two identifications.%
	\footnote{This is different from the identification induced by the
	action of $\Gm$ on $\mathbf P^1$, but is consistent with the
	identification of the characteristic monoid of $\mathbf P^1$ with
	$\mathbf N$ at each point.}
	Restriction to these two loci and to the section of $\mathscr P_S$ corresponding
	to the open orbit of $\Gm$ on $\mathbf P^1$ gives an isomorphism 
	$\Pic(\mathscr P_S) \simeq \Pic(S) \times \mathbf Z^2$.  
	Let $\iota : \mathscr P \to \mathscr P$ be the map induced from multiplication
	by $-1$ on $\mathbf P^1$.  Then we have $\alpha \iota = [-1] \alpha$, so
	$\iota^\ast \alpha^\ast W = \alpha^\ast [-1]^\ast W$.  The action of $\iota^\ast$
	exchanges the two components of $\mathbf Z^2$ under the identification of
	$\Pic(\mathscr P) = \mathbf Z^2$ and the action of $[-1]^\ast$ reverses sign.
	Therefore $\alpha^\ast W$ lies in the antidiagonal of $\mathbf Z^2$.
	By construction, the pullback of $\mathcal O(1)$ to
	$\mathscr P$ has weights $+1$ at $0$ and $-1$ at $\infty$ so there is a unique
	$n$ such that $\alpha^\ast W(n)$ is pulled back from $S$.  In fact,
	$\alpha^\ast W(n)$ is then trivial, because its fiber over the open
	section of $\mathscr P$ over $S$ coincides with the fiber of $W$ over
	the origin of $\tropGm$, which is trivialized because $W$ is the torsor
	underlying an extension.

	\textsc{Step 2.} The underlying torsor over Artin fans.%
	\footnote{Artin fans are sometimes also called toric stacks or Olsson fans.}
	Replacing $W$ by $W(n)$, we will now assume that $\alpha^\ast W$ is trivial,
	and we fix one trivialization.  Our next task will be to propagate this
	trivialization to all stacks $\mathscr V_S = \mathscr V \times S$ where 
	$\mathscr V = [V/T]$ for a toric variety $V$ with dense torus $T$.

	Fix a map $\beta : \mathscr V \to \tropGm$.  Then $\mathscr V \mathop\times_{\tropGm} \mathscr P$ is a logarithmic modification $\tilde{\mathscr V}$ of $\mathscr V$.  We write $\tau : \tilde{\mathscr V} \to \mathscr V$ for the projection.  By \cite[p.\ 76, Proposition]{Fulton}, we have $\mathrm R \tau_\ast \mathcal O_{\tilde{\mathscr V}} = \mathcal O_{\mathscr V}$.  Since both $\mathscr V$ and $\tilde{\mathscr V}$ are flat over $\mathbf Z$ this also holds universally: $\mathrm R \tau_\ast \mathcal O_{\tilde{\mathscr V}_S} = \mathcal O_{\mathscr V_S}$.%
	\footnote{
		The claim $\mathrm R \tau_\ast \mathcal O_{\tilde{\mathscr V}_S} = \mathcal O_{\mathscr V_S}$ is \'etale-local in $S$.  We can therefore assume that $S \to \mathscr V$ factors through the toric variety $V$.  Let $\tilde V$ be the base change of $\tilde{\mathscr V}$ to $V$.  Then $\tilde V$ is a toric modification of $V$.  

		We wish to show that $\mathrm R \tau_\ast \mathcal O_{\tilde V_S} = \mathcal O_{V_S}$.  This claim is Zariski-local in $V$, so we may assume that $V$ is an affine toric variety: $V = \Spec A$.  Choose a finite cover of $\tilde V$ by open affines $U_i = \Spec B_i$ with intersections $U_{i_1} \cap \cdots \cap U_{i_n} = \Spec B_{i_1 \cdots i_n}$.  Since $H^p(\tilde V, \mathcal O_{\tilde V}) = 0$ for all $p > 0$, \v Cech cohomology gives an exact sequence:
		\begin{equation*}
			0 \to A \to \prod_i B_i \to \prod_{i < j} B_{ij} \to \cdots
		\end{equation*}
		This sequence has only finitely many nonzero terms, since the cover was selected to be finite, and all of the terms are flat over $\mathbf Z$.  Therefore the following sequence is also exact:
		\begin{equation*}
			0 \to A \otimes \mathcal O_S \to \prod B_i \otimes \mathcal O_S \to \prod_{i < j} B_{ij} \otimes \mathcal O_S \to \cdots
		\end{equation*}
		Since $V_S = \Spec_S (A \otimes \mathcal O_S)$, this shows that $\mathrm R \tau_\ast \mathcal O_{\tilde V_S} = \mathcal O_{V_S}$.
	}
	We obtain~\eqref{eqn:102} and therefore~\eqref{eqn:50}:
	\begin{gather} 
		\label{eqn:102}
		\tau_\ast \mathcal O_{\tilde{\mathscr V}_S}^\ast = \mathcal O_{\mathscr V_S}^\ast \\
		\label{eqn:50}
		\tau_\ast \tau^\ast \beta^\ast W = \beta^\ast W 
	\end{gather}
	If $\tilde\beta : \tilde{\mathscr V}_S \to \mathscr P_S$ denotes the base change
	of $\beta$ to $\tilde{\mathscr V}$, the bundle
	$\tau^\ast \beta^\ast W = \tilde\beta^\ast \alpha^\ast W$ inherits a trivialization
	by pullback from $\alpha^\ast W$.  Applying $\tau_\ast$ and~\eqref{eqn:50}, this 
	trivialization descends uniquely to a trivialization of $\beta^\ast W$ on $\mathscr V_S$.

	\textsc{Step 3.} The underlying torsor on logarithmic schemes over $S$.  
	We have trivialized $\beta^\ast W$ on every $\mathscr V_S = [V/T] \times S$ when $V$ is a toric variety and $T$ is its dense torus.  We will now extend this trivialization to all logarithmic schemes $U$ over $S$.  Provided we can extend this trivialization compatibly with respect to pullback, it is an \'etale-local problem to describe it.  We will therefore freely replace $U$ by an \'etale cover as necessary.

	Suppose that $U$ is a logarithmic scheme over $S$ and $\gamma \in \Gamma(U, \overnorm M_U^{\rm gp})$ gives a map $U \to \tropGm$.  Replacing $U$ by an \'etale cover if necessary, we can assume that $U$ has a global chart and factor $\gamma$ through some $\beta : \mathscr V \to \tropGm$ where $\mathscr V = [V/T]$ for an affine toric variety $V$ with dense torus $T$.  The trivialization of $\beta^\ast W$ on $\mathscr V_S$ constructed above pulls back to a trivialization of $\gamma^\ast W$.

	We must now prove that this trivialization is independent of the choice of factorization of $U \to \tropGm$ through $\beta : \mathscr V \to \tropGm$.  Suppose that $U \to \tropGm$ also factors through $\beta' : \mathscr V' \to \tropGm$, where $\mathscr V = [V/T]$ and $\mathscr V' = [V'/T']$ are quotients of toric varieties $V$ and $V'$ by their dense tori.  The identity of the two trivializations of $\gamma^\ast W$ is an \'etale-local question on $U$, so we may assume that both $V$ and $V'$ are \emph{affine} toric varieties, say $V = \Spec \mathbf Z[M]$ and $V' = \Spec \mathbf Z[M']$ for sharp, saturated monoids $M$ and $M'$.  It will suffice to show that $\mathscr V \mathop\times_{\tropGm} \mathscr V'$ is isomorphic to the quotient of a toric variety by its dense torus.  We may identify $M = \Gamma(\mathscr V, \overnorm M_{\mathscr V})$ and $M' = \Gamma(\mathscr V', \overnorm M_{\mathscr V'})$, so that $\beta \in M^{\rm gp}$ and $\beta' \in {M'}^{\rm gp}$.  Let $M''$ be the saturated image of $M \times M'$ in $M^{\rm gp} \times {M'}^{\rm gp} / \mathbf Z(\beta, -\beta')$.  Then $\mathscr V \mathop\times_{\tropGm} \mathscr V'$ is representable by $[V''/T'']$ where $V'' = \Spec \mathbf Z[M'']$ and $T''$ is its dense torus.  This completes the demonstration that $\gamma^\ast W$ is trivialized independent of choices (apart from the original trivialization of $\alpha^\ast W$ on $\mathscr P_S$).

	\textsc{Step 4.} The group structure.  
	We conclude that the underlying torsor of the extension $W$ is trivial.  Therefore the structure of an extension on $W$ is encoded by a morphism $\varphi : (\tropGm \times \tropGm)_S \to \Gm$ satistying $\varphi(x,0) = \varphi(0,x) = 1$.  But the induced map $(\alpha \times \alpha)^\ast \varphi : (\mathbf P^1 \times \mathbf P^1)_S \to \Gm$ factors through $S$ and is therefore constant with value $1$.  By the same process as in Step~2, we deduce that, for any Artin fan $\mathscr V$ and any map $\mathscr V \to \tropGm$, the map $(\mathscr V \times \mathscr V)_S \to \Gm$ induced by $\varphi$ must be constant with value~$1$.  Then, by the same process as in Step~3, we deduce that for any logarithmic scheme $U$ over $S$ and any $U \to \tropGm$, the induced map $U \mathop\times_S U \to \Gm$ is constant with value~$1$.  We conclude that $\varphi$ is constant with value~$1$ and therefore that the extension~$W$ is trivial.
\end{proof}

\bibliographystyle{halpha}
\bibliography{me,ega-sga,monodromy}

\end{document}